\theoremstyle{plain}
\newtheorem{thm}{Theorem}[section]
\newtheorem{cor}[thm]{Corollary}
\newtheorem{lemma}[thm]{Lemma}
\newtheorem{prop}[thm]{Proposition}
\newtheorem{thm-intro}{Theorem}
\newtheorem{prop-intro}{Proposition}
\newtheorem{cor-intro}{Corollary}
\newtheorem{dfn-intro}{Definition}
\newtheorem{que-intro}{Question}
\theoremstyle{definition}
\newtheorem{defn}[thm]{Definition}
\newtheorem{conje}{Conjecture}
\theoremstyle{remark}
\newtheorem{rem}[thm]{Remark}
\newtheorem{claim}{Claim}
\numberwithin{equation}{section}
\newcommand{\al}{\alpha}
\newcommand{\mc}[1]{\mathcal{#1}}
\newcommand{\mk}[1]{\mathfrak{#1}}
\newcommand{\pbs}{the Banach-Saks property}
\newcommand{\nrm}[1]{\|#1\|}
\DeclareMathOperator{\conv}{\mathrm{co}}
\newcommand{\defi}{\begin{defin}\rm}
\newcommand{\fdefi}{\end{defin}}
\newcommand{\eje}{\begin{ejemplo}}
\newcommand{\feje}{\end{ejemplo}}
\newcommand{\ejes}{\begin{ejemplos}}
\newcommand{\fejes}{\end{ejemplos}}
\newcommand{\lema}{\begin{lem}}
\newcommand{\flema}{\end{lem}}
\newcommand{\nota}{\begin{notas}\rm}
\newcommand{\fnota}{ \end{notas}}
\newcommand{\lclam}{\begin{lclaim}}
\newcommand{\flclam}{\end{lclaim}}
\newcommand{\ben}{\begin{enumerate}}
\newcommand{\een}{\end{enumerate}}
\newcommand{\bit}{\begin{itemize}}
\newcommand{\eit}{\end{itemize}}
\newcommand{\mr}[1]{\mathrm{#1}}
\newcommand{\casos}{\begin{itemize}}
\newcommand{\fcasos}{\end{itemize}\setcounter{cs}{1}}
\newcommand{\ro}{\varrho}
\newcommand{\conj}[2]{ \{ {#1}\,:\,{#2} \} }
\newcommand{\ou}{\omega_{1}}
\newcommand{\om}{\omega}
\newcommand{\Om}{\Omega}
\newcommand{\buit}{\emptyset}
\newcommand{\be}{\beta}
\newcommand{\de}{\delta}
\newcommand{\De}{\Delta}
\newcommand{\la}{\lambda}
\newcommand{\Sig}{\Sigma}
\newcommand{\vep}{\varepsilon}
\newcommand{\N}{{\mathbb N}}
\newcommand{\rest}{\upharpoonright}
\newcommand{\supp}{\mathrm{supp\, }}
\newcommand{\con}{\subseteq}
\newcommand{\prue}{\begin{proof}}
\newcommand{\fprue}{\end{proof}}
\begin{document}
\setlength{\baselineskip}{6mm}

\title{The convex hull of a Banach-Saks set}

\author[J. Lopez-Abad]{J. Lopez-Abad}
\address{J. Lopez-Abad \\ Instituto de Ciencias Matematicas (ICMAT).  CSIC-UAM-UC3M-UCM. C/ Nicol\'{a}s Cabrera 13-15, Campus Cantoblanco, UAM
28049 Madrid, Spain} \email{abad@icmat.es}

\author[C. Ruiz]{C. Ruiz}
\address{ C. Ruiz \\ Departamento de An{\'a}lisis Matem{\'a}tico, Facultad de
Matem{\'a}ticas, Universidad Complutense, 28040 Madrid, Spain \\}
\email{ Cesar\_Ruiz@mat.ucm.es}

\author[P. Tradacete]{P. Tradacete}
\address{P. Tradacete\\Mathematics Department\\ Universidad Carlos III de Madrid \\  28911 Legan\'es (Madrid). Spain.}
\email{  ptradace@math.uc3m.es }

\thanks{Support of the Ministerio de Econom\'{\i}a y Competitividad  under project MTM2008-02652 (Spain) and Grupo UCM 910346 is gratefully acknowledged.}

\subjclass[2010]{46B50,\,05D10}

\keywords{Banach-Saks property, convex hull, Schreier spaces, Ramsey property.}

\begin{abstract}
A subset $A$ of a Banach space is  called Banach-Saks when every sequence in $A$ has a Ces{\`a}ro convergent
subsequence. Our interest here focusses on the following problem: is the convex hull of a Banach-Saks set
again Banach-Saks? By means of a combinatorial argument, we show that in general the answer is negative.
However, sufficient conditions are given in order to obtain a positive result.
\end{abstract}

\dedicatory{Dedicated to the memory of Nigel J. Kalton}

\maketitle

\section{Introduction}
A classical theorem of S. Mazur asserts that the convex hull of a compact set in a Banach space is again
relatively compact. In a similar way, Krein-\v{S}mulian's Theorem says that the same property holds for
weakly compact sets, that is, these sets have relatively weakly compact convex hull. There is a third
property, lying between these two main kinds of compactness, which is defined in terms of Ces\`aro
convergence. Namely, a subset $A $ of a Banach space $X$ is called Banach-Saks if every sequence in $A$ has a
Ces{\`a}ro convergent subsequence (i.e. every sequence $ (x_n)_n$ in $A$ has a subsequence $ (y_n)_n$ such
that the sequence of arithmetic means $((1/n)\sum_{i=1}^n y_{i})_n$   is norm-convergent in $X$).   In modern
terminology, as it was pointed out by H. P. Rosenthal \cite{Rosenthal}, this is equivalent to saying that  no
difference sequence in $A$ generates an $\ell_1$-spreading model.

The Banach-Saks property has its origins in the work of S. Banach and S. Saks \cite{BS}, after whom the property is
named. In that paper it was proved that the unit ball of $L_p$ ($1<p<\infty$) is a Banach-Saks set.
Recall that a Banach space is said to have the Banach-Saks property when its unit ball is a Banach-Saks set.
This property has been widely studied in the literature (see for instance \cite{Baernstein}, \cite{Beauzamy},
\cite{Farnum}) and more recently in \cite{ASS} and \cite{DSS}. Observe that since a
Banach space with the Banach-Saks property must be reflexive \cite{NW}, it is clear that neither $L_1$ nor
$L_\infty$ have this property. However, weakly compact sets in $L_1$ are Banach-Saks \cite{Szlenk}, and every
sequence of disjoint elements in $L_\infty$ is also a Banach-Saks set.

Since every compact set is Banach-Saks, and these sets are in turn weakly compact, taking into account both Mazur's and Krein-\v{S}mulian's results, it may seem reasonable to expect  that the convex hull of a Banach-Saks set is also Banach-Saks. We will show in Section  \ref{counterexample} that this is not the case in general. We present a canonical example consisting of the weakly-null unit basis
$(u_n)_n$ of a Schreier-like space $X_\mc F$ for a certain family of finite subsets $\mc F$ on $\N$ that we
call a $T$-family (see Definitions \ref{def-2} and \ref{ij4tijrigrf}). The role of the Schreier-like spaces and such families
is not incidental. There are several equivalent conditions to the
Banach-Saks property in terms of properties of certain families of finite subsets of $\N$ (see Theorem \ref{char1}), and in fact we
prove in Theorem \ref{erioeiofjioedf} that a possible counterexample must be of the form $X_\mc F$ for a
$T$-family $\mc F$. Therefore, an  analysis of the families of finite subsets of integers is needed to understand the Banach-Saks property.

The example of a $T$-family we present is influenced on a classical construction  P. Erd\H{o}s and A. Hajnal
\cite{ErHa} of a sequence of measurable subsets of the unit interval indexed by pairs of integers.
 These sequences of events behave in general in a completely different way than those indexed by integers, as
it can be seen, for example, in the work of D. Fremlin and M. Talagrand \cite{FrTa}.  Coming back to our
space, every subsequence of the basis $(u_n)_n$ has a further subsequence which is equivalent to the unit
basis of $c_0$, yet there is a block sequence of averages of $(u_n)_n$ generating an $\ell_1$-spreading
model. There is also the reflexive counterpart, either by considering a Baernstein space associated to $\mc
F$, or from a more general approach
 considering a Davis-Figiel-Johnson-Pelczynski interpolation space of $X_\mc F$.

As far as we know, the main question considered in this paper appeared explicitly in \cite{GG}, where the
authors also proved that every Banach-Saks set in the Schreier space has Banach-Saks convex hull. We will
  see in Theorem \ref{ioo34iji4jtr} that this fact can be further extended to Banach-Saks sets contained
in generalized Schreier spaces.

The paper is organized as follows: In Section 2 we introduce some notation, basic definitions and facts
concerning the Banach-Saks property, with a special interest on its combinatorial nature. In Section 3 several sufficient conditions are given for the stability of the Banach-Saks property under taking convex hulls. This includes the study of Banach-Saks sets in Schreier-like spaces $X_{\mc S_\al}$ defined from any generalized Schreier family $\mc S_\al$. Finally, in Section \ref{counterexample} we present a canonical example of a Banach-Saks set whose convex hull is not, as well as the corresponding reflexive version.


\section{Notation, basic definitions and facts}\label{Preliminaries}

We use standard terminology in Banach space theory from the monographs \cite{A-K} and \cite{Li-Tz}. Let us introduce now some basic concepts in infinite Ramsey theory, that will be  used
throughout this paper. Unless specified otherwise, by a family $\mathcal{F}$ on a set $I$  we mean a
collection of finite subsets of $I$. We denote infinite subsets by capital letters $M,N,P,\dots$, and finite
ones with $s,t,u,\dots$.   Given a family $\mc F$ on $\N$, and $M\con \N$, we define the \emph{trace} $\mc
F[M]$ of $\mc F$ in $M$ and the \emph{restriction} $\mc F\rest M$ of $\mc F$ in $M$ as
\begin{align*}
\mc F[M]:=&\conj{s\cap M}{s\in \mc F}, \\
\mc F\rest M:=&\conj{s\in \mc F}{s\con M},
\end{align*}
respectively.  A family $\mc F$ on $I$ is called compact, when it is compact with respect to the topology
induced by the product topology on $2^I$. The family $\mc F$ is pre-compact, or relatively compact, when the
topological closure of $\mc F$ consists only of finite subsets of $I$. The family $\mc F$ is
\emph{hereditary} when for every $s\con t\in \mc F$ one has that $s\in \mc F$. The $\con$-closure of $\mc F$
is the minimal hereditary family $\widehat{\mc F}$ containing $\mc F$, i.e. $\widehat{\mc F}:=\conj{t\con
s}{s\in \mc F}$. It is easy to see that  $\mc F$ is pre-compact if and only if $\widehat{\mc F}$ is compact.
Typical examples of pre-compact families are
\begin{align*}
{[I]}^n\,\,\,  :=& \{s\con I : \#s =n\},\\
{[I]}^{\leq n}  :=& \{s\con I : \#s \leq n\}, \\
{[I]}^{<\omega}  :=& \{s\con I : \#s < \infty\}.
\end{align*}
A natural procedure to obtain  pre-compact families is to consider, given  a relatively weakly-compact subset
$\mc K$ of $c_0$ and $\vep,\de>0$, the sets
\begin{align*}
\supp_{\vep}(\mc K):= &\conj{\supp_{\vep}x}{x\in  \mc K},\\
\supp_{\vep,+}(\mc K):= &\conj{\supp_{\vep,+}x}{x\in  \mc K},\\
\supp_{\vep}^{\de}(\mc K):=&\conj{\supp_{\vep,+}x}{x\in (\mc K)_\vep^\de},
\end{align*}
where $\supp_{\vep}x:=\conj{n\in \N}{|(x)_n|\ge \vep}$, $\supp_{\vep,+}x:=\conj{n\in \N}{(x)_n\ge \vep}$,
$(\mc K)_\vep^\de:=\conj{x\in \mc K}{\sum_{n\notin \supp_\vep x}|(x)_n|\le \de}$,  and $(x)_n$ denotes the
$n^\text{th}$ coordinate of $x$ in the canonical unit basis of $c_{00}$.

In particular, when $(x_n)_n$ is a weakly-convergent sequence to $x$ in some Banach space $X$, and $\mc M$ is
an arbitrary subset  of $B_{X^*}$ the family $\mc K:=\conj{(x^*(x_n-x))_n}{x^*\in \mc M}\con c_0$ is
relatively weakly-compact. Given $\vep,\de>0$ and  $\mc M\con B_{X^*}$, we define
\begin{align*}
\mc F_\vep((x_n)_n,\mc M):=&\supp_\vep(\mc K),\\
\mc F_\vep^\de((x_n)_n,\mc M):=&\supp_\vep^\de(\mc K).
\end{align*}
When $\mc M=B_{X^*}$ we will simply omit $\mc M$ in the terminology above.

Given $n\in \N$, a family $\mc F$ on $I$ is called \emph{$n$-large} in some $J\con I$ when for every infinite
$K\con J$  there is
  $s\in \mc F$ such that $\#(s\cap K)\ge n$. Or equivalently, when $\mc F[K]\not\con [K]^{\le n-1}$
for any $K\con J$. The family $\mc F$ is \emph{large} on $J$ when it is $n$-large on $J$ for every $n\in \N$.
Perhaps the first known example of a compact, hereditary  and large family is the Schreier family
$$\mc S:=\conj{s\con \N}{\# s \le \min s}.$$
Generalizing ideas used for families of sets, given  $\mc K\con c_0$ and $M\con \N$, we define $\mc
K[M]:=\conj{\mathbbm 1_M \cdot x}{x\in \mc K}$ as the image of $\mc K$ under the natural restriction to the
coordinates in $M$.
%
The following is a list of well-known results on compact families, commonly used by the specialist, which are necessary to understand most of the properties of Banach-Saks sets.
\begin{thm}\label{classif1}
Let $\mc K$ be a relatively weakly-compact subset of $c_0$, $\vep,\de>0$. Then there is an infinite subset
$M\con \N$ such that
\begin{enumerate}
\item[(a)] $\supp_\vep(\mc K [M])=\supp_\vep^\de(\mc K[M])$ and $\supp_\vep(\mc K [M])$ is hereditary, and
\item[(b.1)] either there is some $k\in \N$ such that $\supp_\vep(\mc K [M])=[M]^{\le k}$,
\item[(b.2)] or else ${}_*(\mc S\rest M):=\conj{s\setminus \{\min s\}}{s\in \mc S\rest M}\con \supp_\vep(\mc
K[M])$, and consequently  $ \supp_\vep(\mc K[M])$ is large in $M$.
\end{enumerate}
\end{thm}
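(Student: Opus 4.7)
The strategy is to convert the weak compactness of $\mc K\con c_0$ into a combinatorial statement about the family $\mc F:=\supp_\vep(\mc K)$ on $\N$, then apply iterated Ramsey-type stabilizations. The preliminary observation is that $\mc F$ is pre-compact: if $\widehat{\mc F}$ were not compact, some infinite $A=\{a_i\}\con\N$ would satisfy $[A]^{<\omega}\con\widehat{\mc F}$, so for each $n$ there would exist $x_n\in \mc K$ with $\{a_1,\dots,a_n\}\con\supp_\vep x_n$; any weak cluster point of $(x_n)_n$ would lie in $c_0$ yet take absolute value at least $\vep$ on infinitely many coordinates, a contradiction.

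For part (a), I would first apply a Galvin/Nash-Williams-type thinning to produce $M_0\con\N$ with $\mc F[M_0]=\widehat{\mc F}[M_0]$, so that $\supp_\vep(\mc K[M_0])$ becomes hereditary in $M_0$. A second refinement to $M_1\con M_0$ arranges that every $x\in \mc K[M_1]$ satisfies $\sum_{n\notin\supp_\vep x}|(x)_n|\le \de$; persistent failure of this condition would again produce weakly convergent sequences in $c_0$ whose $\ell_1$-tails outside the $\vep$-supports do not vanish, contradicting the coordinate-wise convergence of the weak limit. Because this second refinement only thins the index set, the hereditariness of $\supp_\vep(\mc K[M_1])$ is preserved, yielding $\supp_\vep(\mc K[M_1])=\supp_\vep^\de(\mc K[M_1])$ and completing (a).

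With (a) in hand, set $\mc G:=\supp_\vep(\mc K[M_1])$, a hereditary pre-compact family on $M_1$. If there exist $k\in\N$ and $M\con M_1$ with $\mc G[M]\con[M]^{\le k}$, a further Ramsey refinement of $M$, choosing $k$ minimal and forcing every $k$-subset to be realized as an actual $\vep$-support, produces $M'\con M$ with $\mc G\rest M'=[M']^{\le k}$, which gives (b.1). Otherwise $\mc G$ is $n$-large on every infinite subset of $M_1$ for every $n$, and I would build $M=\{m_1<m_2<\dots\}\con M_1$ by a diagonal induction: having chosen $m_1<\dots<m_j$, pick $m_{j+1}$ so that for every $j'\le j$ and every $t\con\{m_{j'+1},\dots,m_{j+1}\}$ with $\#t\le m_{j'}$ one has $t\in \mc G$. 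This is possible by combining the $m_{j'}$-largeness of $\mc G$ on the tail $M_1\setminus\{m_1,\dots,m_j\}$ with hereditariness; the resulting $M$ satisfies ${}_*(\mc S\rest M)\con \mc G$, establishing (b.2).

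The principal obstacle lies in the inductive step of case (b.2), where the Schreier constraint $\#s\le\min s$ must be reconciled with the density required to realize every such $s$ as an actual $\vep$-support of some element of $\mc K[M]$; this balance rests on iterating a finite Ramsey argument at each stage, exploiting both the $n$-largeness produced by the failure of (b.1) and the hereditariness obtained in (a). A secondary but non-trivial point is the coordination of the two refinements in part (a), so that the tail-control thinning does not destroy the hereditariness just established by the first Galvin-type pass.
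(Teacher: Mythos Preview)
Your overall architecture matches what the paper has in mind: it also reduces everything to the combinatorics of the pre-compact family $\mc F=\supp_\vep(\mc K)$ and then invokes Nash--Williams/barrier Ramsey machinery (the paper in fact gives no detailed proof, only a pointer to the fact that a pre-compact family has a trace equal to the closure of a barrier, together with references). So your pre-compactness argument, the Galvin-type pass for hereditariness, and the dichotomy in (b) are all on the right track and essentially the intended route.

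There is, however, a genuine gap in your second refinement for part~(a). You assert that one can thin to $M_1$ so that \emph{every} $x\in\mc K[M_1]$ satisfies $\sum_{n\notin\supp_\vep x}|(x)_n|\le\de$, and you justify this by saying that persistent failure would force a weakly convergent sequence in $c_0$ whose $\ell_1$-tails do not vanish. But weak convergence in $c_0$ is only coordinatewise and says nothing about $\ell_1$-tails; there is no contradiction. Concretely, take $\mc K=\{(2\de/\#s)\,\mathbbm 1_s : s\in[\N]^{<\omega},\,s\neq\emptyset\}$ with $\vep>2\de$: this set is relatively weakly compact in $c_0$, every $\vep$-support is empty, yet for any infinite $M_1$ and any finite $s\con M_1$ the element $(2\de/\#s)\mathbbm 1_s\in\mc K[M_1]$ has tail exactly $2\de>\de$. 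So your uniform thinning is impossible, while the conclusion $\supp_\vep(\mc K[M])=\supp_\vep^\de(\mc K[M])$ still holds because only \emph{some} witness per support set needs small tail. The correct argument works at the level of the barrier $\mc B$ whose closure is $\mc F[M_0]$: one colors elements of $\mc B$ according to whether they admit a witness in $\mc K$ with small $M_0$-tail, applies the barrier Ramsey theorem, and rules out the ``no'' color by noting that any single $x\in c_0$ has small tail on a suitable cofinite set.

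A smaller point: your description of the (b.2) diagonal (``pick $m_{j+1}$ so that every $t\subseteq\{m_{j'+1},\dots,m_{j+1}\}$ with $\#t\le m_{j'}$ lies in $\mc G$'') is not something one can enforce by the choice of a single integer; largeness does not let you extend a prescribed finite set. What actually works is to carry an infinite reservoir $N_j$ along the induction: at stage $j$ apply the classical Ramsey theorem on $[N_j\setminus\{m_j\}]^{m_j}$, use that $\mc G$ is $m_j$-large on every infinite subset to force the ``in $\mc G$'' color, obtain $N_{j+1}$ with $[N_{j+1}]^{\le m_j}\con\mc G$, and set $m_{j+1}=\min N_{j+1}$. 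You allude to this in your final paragraph, but the body of the argument should be rewritten accordingly.
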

The proofs of these facts are mostly based on the Ramsey property of  a particularly relevant type of
pre-compact families called \emph{barriers} on some set $M$, that were introduced by  C. ST. J. A.
Nash-Williams \cite{Nash}. These are families $\mc B$ on $M$ such that every further subset $N\con M$ has an
initial segment in $\mc B$, and such that there do not exist two different elements of $\mc B$ which are subsets one of the other.
Examples of barriers are $[\N]^{n}$, $n\in \N$, and the \emph{Schreier barrier} $\mk S:=\conj{s\in \mc
S}{\#s=\min s}$. As it was proved by Nash-Williams, barriers have the Ramsey property, and in fact provide a
characterization of it. The final ingredient is the fact that  if $\mc F$ is pre-compact, then there is a
trace $\mc F[M]$ of $\mc F$ which is the closure of a barrier on $M$ (we refer the reader to
\cite{AGR},\cite{Lo-To}).

\begin{defn}\label{def-1}
A subset $A$ of a Banach space $ X $ is a Banach-Saks set  (or has the Banach-Saks property) if  every
sequence $  (x_n)_n$ in $A$  has a \emph{Ces\`{a}ro}-convergent subsequence $(y_n)_n $, i.e.   the sequence of
averages $ ((1/n) \sum_{k = 1}^n y_k)_n$ is norm-convergent in $X$.

\end{defn}
It is easy to see that compact sets are Banach-Saks, that the Banach-Saks property is hereditary (every
subset of a Banach-Saks set is again Banach-Saks), it is closed under sums, and  that it is preserved under
the action of a bounded operator. It is natural to ask the following.

\begin{que-intro}\label{qu1}
Is the convex hull of a Banach-Saks set again a Banach-Saks set?
\end{que-intro}

Using the  localized notion of the Banach-Saks property, a space has the
Banach-Saks property precisely when its unit ball is a Banach-Saks set. A classical work by T. Nishiura and D.
Waterman \cite{NW} states that a Banach space with the Banach-Saks property is reflexive. Here is the local version of this fact.

\begin{prop}\label{BS->rwc}
Every Banach-Saks set is relatively weakly-compact.
\end{prop}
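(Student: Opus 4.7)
My plan is to combine the Banach-Saks property with Rosenthal's $\ell_1$-theorem. The argument splits into first verifying boundedness of $A$, and then, via Eberlein--\v{S}mulian, reducing the failure of relative weak compactness to two mutually exclusive alternatives, each of which will contradict the existence of norm-convergent Ces\`aro averages.

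First I would check that $A$ is bounded. If it is not, one can choose $x_n \in A$ with $\|x_n\| \geq 2^n$; any subsequence $(y_n)$ still satisfies $\|y_n\| \geq 2^n$, yet if the Ces\`aro means $\sigma_n = \frac{1}{n}\sum_{k=1}^n y_k$ are norm-convergent, the telescoping identity
\[
\frac{y_n}{n} = \sigma_n - \frac{n-1}{n}\sigma_{n-1}
\]
forces $y_n/n \to 0$ in norm, contradicting $\|y_n\|/n \geq 2^n/n \to \infty$. Hence $A$ must be bounded.

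Next, assuming $A$ is bounded but not relatively weakly compact, Eberlein--\v{S}mulian yields a sequence $(x_n) \subset A$ with no weakly convergent subsequence. I would then apply Rosenthal's $\ell_1$-theorem to the bounded sequence $(x_n)$ to obtain one of two alternatives: either a subsequence $C$-equivalent to the unit vector basis of $\ell_1$, or a weakly Cauchy subsequence. In the first case, a direct estimate modeled on the standard basis of $\ell_1$ (for which $\|\sigma_n - \sigma_m\| = 2(1 - n/m)$) shows that Ces\`aro averages of any $\ell_1$-equivalent sequence fail to be norm-Cauchy, violating the Banach-Saks hypothesis on $A$. In the second case, I extract a Ces\`aro convergent sub-subsequence $(y_n)$ with norm limit $z \in X$; applying an arbitrary $x^* \in X^*$, the scalar sequence $x^*(y_n)$ is Cauchy and its own Ces\`aro averages tend to $x^*(z)$, whence $x^*(y_n) \to x^*(z)$, so $y_n \to z$ weakly, contradicting the choice of $(x_n)$.

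The main obstacle is conceptual rather than technical: recognizing that Rosenthal's dichotomy provides exactly the right partition. The only mildly delicate point is that a weakly Cauchy sequence in $X$ need not converge weakly in $X$ a priori, but this is resolved for free once we know its Ces\`aro averages converge in norm, since any weak$^*$-limit of the dual action coincides with the evaluation of the norm-limit.
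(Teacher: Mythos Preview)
Your proof is correct and follows essentially the same route as the paper: apply Rosenthal's $\ell_1$-theorem, rule out the $\ell_1$ alternative because such sequences have no Ces\`aro-convergent subsequence, and in the weakly Cauchy case use the norm-limit of the Ces\`aro means to identify the weak limit inside $X$. The only differences are cosmetic: you add an explicit boundedness check and argue by contradiction via Eberlein--\v{S}mulian, whereas the paper proceeds directly and phrases the last step using the weak$^*$-limit in $X^{**}$.
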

\prue
Let $A$ be a Banach-Saks subset of a Banach space $X$, and  fix a sequence $(x_n)_n$ in $A$. By Rosenthal's $\ell_1$ Theorem, there is a subsequence
$(y_n)_{n}$ of $(x_n)_n$ which is either equivalent to the unit basis of $\ell_1$ or weakly-Cauchy. The
first alternative cannot occur, since the unit basis of $\ell_1$ is not a Banach-Saks set. Let now $x^{**}\in
X^{**}$ be the $\mathrm{weak}^*$-limit of $(y_n)_{n}$.   Since $A$ is a Banach-Saks subset of $X$, there is a
further subsequence $(z_n)_{n}$ of $(y_n)_{n}$ which is Ces\`{a}ro-convergent to some $x\in X$. It follows  that
$x^{**}=x$, and consequently $(z_n)_{n}$ converges weakly to $x\in X$.
\fprue
 As the previous proof suggests, the unit basis of $\ell_1$ plays a very special role for the Banach-Saks
property. This is fully explained by the following characterization, due to H. P. Rosenthal \cite{Rosenthal}
and S. Mercourakis \cite{Mercourakis} in terms of the asymptotic notions of \emph{Spreading models} and
\emph{uniform weakly-convergence}.
\begin{dfn-intro}
Let $X$ be a Banach space and let $(x_n)_n$ be a sequence in $X$ converging weakly to $x\in X$. Recall that
 $(x_n)_n$ \emph{generates an $\ell_1$-spreading model}  when there is  $\de>0$ such that
\begin{equation}
\Big\|\sum_{n\in s} a_n (x_{n}-x)\Big\|\ge \de\sum_{n\in s} |a_n|
\end{equation}
for every   $s\con \N$ with $\# s\le \min s$ and every sequence $(a_n)_{n\in s}$ of scalars.


The sequence $(x_n)_n$ \emph{uniformly weakly-converges} to $x$ when for every $\vep>0$  there is an integer
$n(\vep)>0$ such that for every functional  $x^*\in B_{X^*}$
\begin{equation}\label{iijijdf}
\#(\conj{n\in \N}{|x^*(x_n-x)|\ge \vep})\le n(\vep).
\end{equation}

\end{dfn-intro}

The notion of $\ell_1$ spreading model  is   orthogonal   to the Banach-Saks property:   Suppose that
$(x_n)_n$ weakly-converges to $x$ and generates an $\ell_1$-spreading model. Let $\de>0$ be witnessing that.
Set $y_n=x_n-x$ for each $n$. Since $\nrm{y_n}\ge \de$ for all $n$, it follows by Mazur's Lemma that there is
a subsequence $(z_n)_{n}$ of $(y_n)_n$ which is a 2-basic sequence. We claim that no further subsequence of
$(z_n)_{n}$ is Ces\`{a}ro-convergent: Fix an arbitrary subset $s\con \N$ with even cardinality. Then the upper
half part $t$ of $s$ satisfies that $\#t\le \min t$. So, using also that $(z_n)_{n}$ is $2$-basic,
\begin{equation}
\left\|\frac1{\#s}\sum_{n\in s}z_n\right\|\ge \frac{1}{2}\left\|\frac{1}{\#s}\sum_{n\in t}z_n\right\|\ge \frac{\de}{2}\frac{\#t}{\#s}=\frac{\de}{4}.
\end{equation}
This immediately gives that no subsequence of  $(z_n)_n$ is Ces\`{a}ro-convergent to 0.

On the other hand if $(x_n)_n$ is uniformly weakly-convergent to some $x$, then every subsequence of
$(x_n)_n$ is Ces\`{a}ro-convergent (indeed these conditions are equivalent \cite{Mercourakis}): Suppose that
$(y_n)_n$ is a subsequence of $(x_n)_n$. Now for each $\vep>0$ let
 $n(\vep)$ be witnessing that \eqref{iijijdf} holds.   Set $z_n=y_n-x$ for each $n$.  Now suppose that $s$ is an
 arbitrary finite subset of $\N$ with cardinality $\ge n(\vep)$. Then, given $x^*\in B_{X^*}$, and setting $t:=\conj{n\in s}{|x^*(z_n)|\ge \vep}$,
 we have that
 \begin{equation}
\left| x^*(\frac{1}{\#s}\sum_{n\in s}z_n)\right| \le \frac{1}{\#s}\sum_{n\in t}|x^*(z_n)|+ \frac{1}{\#s}\sum_{n\in s\setminus t}|x^*(z_n)|\le
\frac{n(\vep)}{\#s}C+\vep.
 \end{equation}
Hence,
\begin{equation}
\left\|\frac{1}{\#s}\sum_{n\in s}z_n\right\| \le \frac{n(\vep)}{\#s}C+\vep.
 \end{equation}
This readily implies that $(z_n)_n$ is Ces\`{a}ro-convergent to 0, or, in other words, $(y_n)_n$ is
Ces\`{a}ro-convergent to $x$. Next result summarizes the relationship between these three notions.

\begin{thm}\label{char1}
Let $A$ be an arbitrary subset of a Banach space $X$. The following are equivalent:
\begin{enumerate}
\item[(a)] $A$ is a Banach-Saks subset of $X$.
\item[(b)] $A$ is relatively weakly-compact and for every weakly-convergent sequence in $A$ it never generates an  $\ell_1$-spreading model.
\item[(c)] $A$ is relatively weakly-compact and for every weakly-convergent sequence $(x_n)_n$ in $A$ and every $\vep>0$   the family
$\mc F_\vep((x_n)_n)$ is not large in $\N$.
\item[(d)] $A$ is relatively weakly-compact and for every weakly convergent sequence $(x_n)_n$ in $A$ there is some   norming set $\mc
N$ such that for every $\vep>0$ the family $\mc F_\vep((x_n)_n,\mc N)$ is not  large.
\item[(e)] For every sequence $(a_n)_n$ in $A$ there is a subsequence $(b_n)_n$ and some norming set $\mc N$ such
that for every $\vep>0$ there is $m\in \N$ such that $\mc F_\vep((b_n)_n,\mc N)\con [\N]^{\le m}$.

\item[(f)] Every sequence in $A$ has a uniformly weakly-convergent subsequence.
\end{enumerate}
\end{thm}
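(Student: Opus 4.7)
The plan is to close the cycle $(a)\Rightarrow(b)\Rightarrow(c)\Rightarrow(d)\Rightarrow(e)\Rightarrow(f)\Rightarrow(a)$. Two of these links are already essentially present in the discussion preceding the theorem: Proposition~\ref{BS->rwc} together with the $2$-basic-sequence argument given there --- that a sequence generating an $\ell_1$-spreading model admits no Ces\`aro convergent subsequence --- gives $(a)\Rightarrow(b)$, while the subsequent estimate in the definition of uniform weak convergence yields $(f)\Rightarrow(a)$. What remains are the four middle implications, of which $(b)\Rightarrow(c)$ is the deepest.

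For $(b)\Rightarrow(c)$ I argue by contrapositive. Suppose $(x_n)_n\subseteq A$ converges weakly to $x$ and the family $\mc F_\vep((x_n)_n)$ is large in $\N$ for some $\vep>0$. Setting $\mc K:=\{(x^*(x_n-x))_n:x^*\in B_{X^*}\}\subseteq c_0$ and applying Theorem~\ref{classif1} with parameters $\vep$ and a fixed $\de\ll\vep$ produces $M\subseteq\N$ on which $\supp_\vep(\mc K[M])=\supp_\vep^\de(\mc K[M])$ is hereditary. Largeness on $\N$ forbids the bounded alternative (b.1), so (b.2) must hold and ${}_*(\mc S\rest M)\subseteq\supp_\vep(\mc K[M])$. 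For every $s\in{}_*(\mc S\rest M)$ this supplies $x^*_s\in B_{X^*}$ with $x^*_s(x_n-x)\ge\vep$ for all $n\in s$ and with tail bounded by $\de$. Combining the symmetry $\mc K=-\mc K$ with the hereditariness, a further Ramsey refinement of $M$ can be arranged so that for each $s$ and each prescribed sign pattern on $s$ an appropriately signed witness is available, yielding the estimate
\[
\Bigl\|\sum_{n\in s}a_n(x_n-x)\Bigr\|\gtrsim\vep\sum_{n\in s}|a_n|
\]
for every $s\in\mc S\rest M$ with $\#s\le\min s$ and every sequence of scalars. Thus the subsequence $(x_n)_{n\in M}$ generates an $\ell_1$-spreading model, contradicting (b).

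The remaining implications are more routine. $(c)\Rightarrow(d)$ is immediate with $\mc N=B_{X^*}$. For $(d)\Rightarrow(e)$, extract a weakly convergent subsequence $(c_n)_n$ of a given sequence in $A$ via relative weak compactness and apply (d) to it: for every $k\in\N$, non-largeness of $\mc F_{1/k}((c_n)_n,\mc N)$ furnishes an infinite $K_k\subseteq\N$ and $n_k\in\N$ with $|s\cap K_k|<n_k$ for all $s\in\mc F_{1/k}((c_n)_n,\mc N)$, and a standard diagonal procedure distills a subsequence $(b_n)_n$ and integers $m_\vep$ such that $\mc F_\vep((b_n)_n,\mc N)\subseteq[\N]^{\le m_\vep}$ for every $\vep>0$.

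For the final link $(e)\Rightarrow(f)$, let $c$ be the norming constant of $\mc N$ and set $C_0:=\sup_n\|b_n-b\|$. Splitting each functional sum into ``large'' ($|x^*(b_n-b)|\ge\vep$) and ``small'' contributions yields
\[
\Bigl\|\tfrac{1}{N}\sum_{n=1}^N(b_n-b)\Bigr\|\le c\sup_{x^*\in\mc N}\Bigl|x^*\Bigl(\tfrac{1}{N}\sum_{n=1}^N(b_n-b)\Bigr)\Bigr|\le c\Bigl(\tfrac{m_\vep C_0}{N}+\vep\Bigr),
\]
so $(b_n)_n$ and every one of its subsequences is Ces\`aro convergent to $b$; the Mercourakis equivalence \cite{Mercourakis} already invoked before the theorem then forces $(b_n)_n$ to be uniformly weakly convergent, which is (f). The main difficulty in the entire argument is the sign-coherence step within $(b)\Rightarrow(c)$: Theorem~\ref{classif1} supplies witnesses with one fixed sign on each $s$, whereas the $\ell_1$-spreading-model definition requires the lower bound for arbitrary signed scalars, so one must marry the hereditary positive-support family with a Ramsey-type refinement producing signed witnesses compatible with every decomposition of $s$ into its positive and negative coordinates.
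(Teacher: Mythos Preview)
Your cycle is the same as the paper's, and most steps agree. The one real issue is the sign-coherence step in $(b)\Rightarrow(c)$, which you yourself flag as the main difficulty. Your proposed fix --- a ``Ramsey refinement'' producing, for every $s$ and every sign pattern $\epsilon\in\{\pm1\}^s$, a single functional $x^*$ with $\epsilon_n x^*(x_n-x)\ge\vep$ on all of $s$ --- is not justified, and I do not see how to make it work: Theorem~\ref{classif1} only supplies witnesses with a \emph{constant} sign on $s$, and no Ramsey coloring of $M$ will manufacture a mixed-sign witness for an arbitrary pattern, since the existence of such a witness is an analytic constraint on the sequence, not a combinatorial one on the index set. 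The paper sidesteps this entirely. Given $s\in\mc S\rest N$ and scalars $(\la_k)_{k\in s}$, it simply picks $t\subseteq s$ on which all $\la_k$ share a sign, which carries at least a quarter of the $\ell_1$-mass, and with $t\in{}_*(\mc S\rest N)$; then the \emph{positive} witness for $t$ (together with the $\de$-small tail off $t$ and the $2$-basic constant, obtained via Mazur, to absorb the discarded coordinates) already yields $\|\sum_{k\in s}\la_k y_k\|\ge\frac{\vep}{4(1+\vep)}\sum_{k\in s}|\la_k|$. So signed witnesses are never needed; the hereditariness from Theorem~\ref{classif1}(a) is exactly what makes this subset trick go through.

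Your $(e)\Rightarrow(f)$ is correct but takes a genuinely different route. You observe that $\mc N$-uniform weak convergence forces every subsequence to be Ces\`aro convergent (via the norming inequality), and then invoke Mercourakis' equivalence to upgrade to full uniform weak convergence. The paper instead proves the implication directly: assuming $\mc F_\vep((y_n)_n)$ contains arbitrarily large sets for some $\vep>0$, it approximates a bad functional by a convex combination from $\mc N$, puts the induced probability measure on the index set of that combination, and applies Gillis' Lemma~\ref{gillis} to locate arbitrarily large sets in $\mc F_{\la\vep(1-\de)}((y_n)_n,\mc N)$, contradicting (e). Your path is shorter on the page but black-boxes precisely the hard direction of Mercourakis' theorem, whose own proof rests on a Gillis-type overlap argument; the paper's path is self-contained modulo Lemma~\ref{gillis}.
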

Recall that a $\la$-norming set, $0<\la\le 1$ is a subset $\mc N\con B_{X^*}$ such that
$$\la\nrm{x}\le  \sup_{f\in \mc N}{|f(x)|} \text{ for every $x\in X$}.$$
The subset $\mc N\con B_{X^*}$ is norming when it is $\la$-norming for some $0<\la\le 1$. Note we could rephrase (e) as saying that the sequence $(b_n)_n$ is uniformly weakly-convergent with respect to $\mc N$.

The equivalences between  (a) and (b), and between (a) and (f) are due to Rosenthal \cite{Rosenthal} and
Mercourakis \cite{Mercourakis}, respectively. For the sake of completeness, we give now hints of the proof of
Theorem \ref{char1} using, mainly, Theorem \ref{classif1}:

(a) implies (b) because we have already seen that if a sequence $(x_n)_n$ converges weakly to $x$, generates
an $\ell_1$-spreading model  and is such that $(x_n-x)_n$ is basic, then it does not have Ces\`{a}ro-convergent
subsequences.  We prove that (b) implies (c)   by using   Theorem \ref{classif1}. Let $(x_n)_n$ be a weakly
convergent sequence in  $A$ with limit $x$, and let us see that $\mc F_\vep((x_n))$ is not large for any
$\vep>0$.  Otherwise, by Theorem \ref{classif1}, there is some $M$ such that
$${}_*(\mc S\rest M)\con  \mc F_\vep^\de((x_n)_n)[M]=\mc F_\vep((x_n)_{n})[M].$$
Set $y_n:=x_n-x$ for each $n\in M$. It follows that $(y_n)_{n\in M}$  is a non-trivial weakly-null sequence,
hence by Mazur's Lemma, there is $N\con M$ such that $(y_n)_{n\in N}$ is a 2-basic sequence.  We claim that
then $(y_n)_{n\in N}$ generates an $\ell_1$-spreading model, which is impossible: Let $s\in \mc S\rest N$,
and let $(\la_k)_{k\in s}$ be a sequence of scalars. Let $t\con s$ be such that  $\la_k \cdot \la_l\ge 0$ for
all $k,l\in t$, $|\sum_{k\in t} \la_k|\ge 1/4\sum_{k\in s}|\la_k|$ and $t\in {}_*(S\rest N)$. Then let
$x^*\in B_{X^*}$ be such that
$$\text{$x^*(y_n)\ge \vep$ for $n\in t$, and $\sum_{n\in M\setminus t}|x^*(y_n)|\le
\frac\vep4$.}$$
It follows that
\begin{align*}
\left\|\sum_{k \in s} \la_k y_k\right\|\ge & \left|x^*(\sum_{k\in s}\la_k y_k)\right|\ge \left|\sum_{k\in t}\la_k x^*( y_k)\right|
-\frac{\vep}4\max_{k\in s}|\la_k|\ge \vep
\left|\sum_{k\in t}\la_k\right| -\frac{\vep}4\max_{k\in s}|\la_k|\\
\ge & \frac\vep4 \sum_{k\in s}|\la_k|- {\vep}\left\|\sum_{k\in s}\la_k y_k\right\|,
\end{align*}
and consequently,
\begin{equation}
\left\|\sum_{k \in s} \la_k y_k\right\|\ge\frac{\vep}{4(1+\vep)}\sum_{k\in s}|\la_k|.
\end{equation}
Now, we have that (c) implies (d) and (d) implies (e) trivially. For the implication (e) implies (f) we use
the following classical result by J. Gillis \cite{Gi}.
\begin{lemma}\label{gillis}
For any $\vep,\de>0$ and $m\in \N$ there is $n:=\mathbf{n}(\vep,\de,m)$ such that whenever
$(\Omega,\Sigma,\mu)$ is a probability space and $(A_i)_{i=1}^n$ is a sequence of $\mu$-measurable sets with
$\mu(A_i)\ge \vep $ for every $1\le i\le n$, there is $s\con \{1,\dots,n\}$ of cardinality $m$ such that
$$\mu(\bigcap_{i\in s}A_i)\ge (1-\de)\vep^m.$$
\end{lemma}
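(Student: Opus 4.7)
The plan is to apply a moment method to the counting function $f:=\sum_{i=1}^n \mathbbm{1}_{A_i}$. Since $\int f\, d\mu=\sum_{i=1}^n \mu(A_i)\ge n\vep$ and $\mu$ is a probability measure, Jensen's inequality applied to the convex function $t\mapsto t^m$ gives
\begin{equation*}
\int f^m\, d\mu \,\ge\, \Bigl(\int f\, d\mu\Bigr)^m \,\ge\, (n\vep)^m.
\end{equation*}
Expanding $f^m=\sum_{(i_1,\ldots,i_m)\in \{1,\ldots,n\}^m}\mathbbm{1}_{A_{i_1}\cap\cdots\cap A_{i_m}}$ and integrating, I would split the resulting sum into an ``off-diagonal'' part indexed by tuples with pairwise distinct entries, and a ``diagonal'' remainder indexed by tuples with at least one repetition.

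The diagonal contribution is bounded above by the number of such tuples, $n^m-n^{\underline{m}}$, where $n^{\underline{m}}:=n(n-1)\cdots(n-m+1)$, since each summand is at most $\mu(\Om)=1$. By symmetry of finite intersections, the off-diagonal sum equals $m!\sum_{s}\mu(\bigcap_{i\in s}A_i)$, where $s$ ranges over the subsets of $\{1,\ldots,n\}$ of cardinality $m$. Dividing by the number $\binom{n}{m}=n^{\underline{m}}/m!$ of such $s$, an averaging argument produces some $s$ with $\#s=m$ satisfying
\begin{equation*}
\mu\Bigl(\bigcap_{i\in s}A_i\Bigr) \,\ge\, \frac{(n\vep)^m-(n^m-n^{\underline{m}})}{n^{\underline{m}}} \,=\, \frac{\vep^m+q_n-1}{q_n},
\end{equation*}
where $q_n:=n^{\underline{m}}/n^m=\prod_{k=0}^{m-1}(1-k/n)$.

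It remains to choose $n=\mathbf{n}(\vep,\de,m)$ large enough that the right-hand side is at least $(1-\de)\vep^m$. An elementary rearrangement shows this reduces to the requirement $q_n\ge (1-\vep^m)/(1-(1-\de)\vep^m)$, which holds for all sufficiently large $n$ since $q_n\to 1$ as $n\to\infty$; the threshold depends only on $\vep$, $\de$ and $m$. The only delicate point in the argument is the careful accounting of the diagonal versus off-diagonal contributions in the expansion of $f^m$; once that book-keeping is done cleanly, the quantitative choice of $n$ follows immediately from the asymptotics of $q_n$.
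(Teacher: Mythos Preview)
Your proof is correct. The moment-method argument via Jensen's inequality applied to $f=\sum_{i=1}^n\mathbbm{1}_{A_i}$, followed by the diagonal/off-diagonal splitting of $\int f^m\,d\mu$ and a pigeonhole averaging, is exactly the classical approach; the algebra leading to the threshold condition $q_n\ge (1-\vep^m)/(1-(1-\de)\vep^m)$ is clean and the asymptotics $q_n\to 1$ finishes it.

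As for comparison with the paper: the paper does \emph{not} supply its own proof of this lemma. It is quoted as a classical result of J.~Gillis \cite{Gi} and used as a black box in the proof of the implication (e)$\Rightarrow$(f) of Theorem~\ref{char1}. Your argument is essentially Gillis's original one, so there is nothing to contrast here beyond noting that you have filled in what the paper deliberately left as a citation.
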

Incidentally, the   counterexample by P. Erd{\H{o}}s and A. Hajnal of the natural generalization of Gillis'
result concerning double-indexed sequences will be crucial for our solution to Question 1 (see Section
\ref{counterexample}).

We pass now to see that (e) implies (f): Fix a sequence $(x_n)_n$ in $A$ converging weakly to $x$ and
$\vep>0$. By (e), we can find a subsequence $(y_n)_n$ of $(x_n)_n$ and  a $\la$-norming set $\mc N$,
$0<\la\le 1$, such that $(y_n)_n$ uniformly-weakly-converges with respect to $\mc N$. Going towards a
contradiction, suppose $(y_n)_n$ does not uniformly weakly-converge to $x$. Fix then $\vep>0$ such that there
are arbitrary large sets in $\mc F_\vep((y_n)_n)$.   In this case we see that then $\mc
F_{\la\vep(1-\de)}((y_n)_n, \mc N)$ has also arbitrary large sets, contradicting our hypothesis.  Set
$z_n:=y_n-x$ for every $n\in \N$. Now given $m\in \N$, let $x^*\in B_{X^*}$ be such that
$$s:=\conj{n\in M}{|x^*(z_n)|\ge \vep} \text{ has cardinality $\ge \mathbf{n}(\frac{\vep\de\la}{2K},\frac12,m)$},$$
where $K:=\sup_n \nrm{z_n}$. By  a standard separation result, there are $f_1,\dots,f_l\in \mc N$ and
$\nu_1,\dots,\nu_l$ such that $\sum_{i=1}^l |\nu_i|\le \la^{-1}$ and
\begin{equation}
\Big|\sum_{i=1}^l \nu_i f_i(z_n)\Big|\ge \vep(1-\frac{\de}2) \text{ for every $n\in s$}.
\end{equation}
Now on $\{1,2,\dots,l\}$ define the probability measure induced by the convex combination
$$\Big(\frac{1}{\sum_{j=1}^l |\nu_j| }|\nu_i|\Big)_{i=1}^l.$$
For each $n\in s$, let
$$A_n:=\conj{j\in \{1,\dots,l\}}{|f_j(z_n)|\ge \vep(1-\de)}.$$
Then, for every $n\in s$ one has that
\begin{align*}
 \vep(1-\frac{\de}{2}) \le \Big|\sum_{j=1}^l \nu_j f_j(z_n)\Big|\le \sum_{j\in A_n}|\nu_j|K +\vep(1-\de).
\end{align*}
Hence,
$$ \mu(A_n)\ge \frac{\de \vep \la}{2K}.$$
By Gillis' Lemma, it follows in particular that there is some $t\con\{1,\dots,l\}$ of cardinality $m$ such
that  $\bigcap_{n\in t}A_n \neq \buit $, so let $j$ be in that intersection. It follows then that
$|f_j(z_n)|\ge \la \vep(1-\de)$ for every $n\in t$, hence $t\in \mc F_{\la\vep(1-\de)}((y_n)_n,\mc N)$.

(f) implies (a) because uniformly weakly-convergent sequences are Ces\`{a}ro-convergent. This finishes the proof.

Hence, Question 1 for weakly-null sequences can be reformulated as follows:
\begin{que-intro}\label{qu2}
Suppose that $(x_n)_n$ is a weakly-null sequence such that some sequence in $\conv(\{x_n\}_n)$ generates an
$\ell_1$-spreading model. Does there exist a subsequence of $(x_n)_n$ generating an $\ell_1$-spreading model?
\end{que-intro}

As a consequence of Theorem \ref{char1} we obtain the following well-known   0-1-law by P. Erd\"{o}s and M.
Magidor \cite{EM}.
\begin{cor}
Every bounded sequence in a Banach space  has a subsequence such that either all its further subsequences are
Ces\`{a}ro-convergent, or none of them.
\end{cor}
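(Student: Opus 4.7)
The plan is to combine Rosenthal's $\ell_1$-dichotomy with the equivalences (a) $\Leftrightarrow$ (b) $\Leftrightarrow$ (f) of Theorem \ref{char1}. Let $(x_n)_n$ be bounded and, by Rosenthal's theorem, pass to a subsequence $(y_n)_n$ that is either equivalent to the unit $\ell_1$-basis or weakly Cauchy. In the $\ell_1$-equivalent case no subsequence is Ces\`aro convergent: for any subsequence $(z_k)_k$ the averages $s_N := N^{-1}\sum_{k=1}^N z_k$ are not Cauchy, since $\|s_{2N} - s_N\| = \|\frac{1}{2N}(\sum_{k=N+1}^{2N} z_k - \sum_{k=1}^N z_k)\|$ is bounded below by a positive constant by $\ell_1$-equivalence. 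If $(y_n)_n$ is weakly Cauchy with $\mathrm{weak}^*$-limit $x^{**} \notin X$, Ces\`aro convergence of a subsequence to some $z \in X$ would force $f(z) = \lim_N N^{-1}\sum_{k=1}^N f(z_k) = f(x^{**})$ for every $f \in X^*$, hence $z = x^{**} \in X$, a contradiction; so again no subsequence is Ces\`aro convergent.

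The remaining case is that $(y_n)_n$ converges weakly to some $x \in X$. Then $A := \{y_n : n \in \N\}$ is relatively weakly compact, and Theorem \ref{char1} applies. If $A$ is Banach-Saks, condition (f) yields a uniformly weakly convergent subsequence $(z_n)_n$ of $(y_n)_n$; since the defining inequality \eqref{iijijdf} is hereditary, every further subsequence of $(z_n)_n$ is uniformly weakly convergent and hence Ces\`aro convergent. If $A$ is not Banach-Saks, then by (a) $\Leftrightarrow$ (b) of Theorem \ref{char1} there is a weakly convergent sequence $(w_k)_k \subseteq A$ generating an $\ell_1$-spreading model with some constant $\de > 0$.

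The technical step is to refine $(w_k)_k$ into a genuine subsequence $(y_{p_l})_l$ of $(y_n)_n$ (with $p_1 < p_2 < \cdots$) that still generates an $\ell_1$-spreading model. The spreading inequality applied with balanced $\pm 1$ coefficients on a set $s$ with $\#s \leq \min s$ prevents $(w_k)_k$ from taking only finitely many values, so we may first extract $(w_{k_j})_j = (y_{n(k_j)})_j$ with pairwise distinct $n(k_j)$, and then a further subsequence with strictly increasing indices. Order-preserving subsequences preserve $\ell_1$-spreading models: for $s' \subseteq \N$ with $\#s' \leq \min s'$, the set $s := \{k_j : j \in s'\}$ satisfies $\#s = \#s' \leq \min s' \leq \min s$, so the original inequality for $(w_k)_k$ applies. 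The resulting $(y_{p_l})_l$ is a subsequence of $(y_n)_n$, weakly convergent to $x$, and generates an $\ell_1$-spreading model. Invoking the argument given just before Theorem \ref{char1}, Mazur's Lemma produces a further 2-basic subsequence of $(y_{p_l} - x)_l$, and the computation there shows no further subsequence of it is Ces\`aro convergent to $0$; equivalently no further subsequence of the corresponding subsequence of $(y_n)_n$ is Ces\`aro convergent to $x$. Since $x$ is the only possible weak (hence Ces\`aro) limit, this subsequence has no Ces\`aro convergent further subsequence at all. The main obstacle is this last refinement: the sequence produced by (b) is not a priori a subsequence of $(y_n)_n$, and the argument hinges on the distinctness observation and the preservation of $\ell_1$-spreading models under order-preserving subsequences.
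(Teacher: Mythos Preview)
Your proof is correct and shares the paper's core strategy: invoke the dichotomy of Theorem \ref{char1}, using uniform weak convergence for the ``all'' alternative and an $\ell_1$-spreading model for the ``none'' alternative. The paper's argument is terser: it cases directly on whether $A=\{x_n\}_n$ is Banach-Saks and cites (b) and (f). You instead first reduce via Rosenthal's theorem, disposing of the $\ell_1$-equivalent and weak$^*$-limit-outside-$X$ cases by hand before applying Theorem \ref{char1} to the weakly convergent remnant; this has the virtue of making the relative weak compactness hypothesis in (b) automatically satisfied. More notably, you explicitly address a point the paper's sketch elides: the weakly convergent sequence supplied by the failure of (b) is a priori only a sequence \emph{in} the set $A$, not a subsequence of $(y_n)_n$. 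Your distinctness argument (the spreading-model lower bound with coefficients $\pm 1$ on a Schreier set forces infinitely many distinct values) followed by monotonization, together with the observation that passing to an order-preserving subsequence preserves the $\ell_1$-spreading-model estimate, correctly closes this gap.
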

To see this, let $(x_n)_n$ be a sequence in a Banach space. If $A:=\{x_n\}_n$ is Banach-Saks, then, by (e)
above, there is a uniformly weakly-convergent subsequence $(y_n)_n$ of $(x_n)_n$, and as we have mentioned
above, every further subsequence of $(y_n)_n$ is Ces\`{a}ro-convergent. Now, if $A$ is not Banach-Saks, then by (b)
there is a weakly-convergent sequence $(y_n)_n$ in $A$ with limit $y$ generating an $\ell_1$-spreading model.
We have already seen that if $(z_n)_n$ is a basic subsequence of $(y_n-y)_n$, then no further subsequence of
it is Ces\`{a}ro-convergent.

We introduce now the Schreier-like spaces, which play an  important role for the Banach-Saks property.
\begin{defn} \label{def-2}
Given a family $\mc F$ on $\N$, we define the Schreier-like norm $\nrm{\cdot}_\mc F$  on $c_{00}(\N)$ as follows.
For each $x\in c_{00}$ let
\begin{equation}\label{def-21}
\|x\|_\mc F=\max\{\|x\|_\infty,\sup_{s\in \mc F}\sum_{n\in s}|(x)_n|\},
\end{equation}
where $(x)_n$ denotes the $n^{\mr{th}}$-coordinate of $x$ in the usual Hamel basis of $c_{00}(\N)$. We define
the Schreier-like space $X_\mc F$ as the completion of $c_{00}$ under the ${\mc F}$-norm.
\end{defn} Note that $X_{\mc F}=X_{\widehat{\mc F}}$ for every family $\mc F$, so the hereditary property of $\mc F$ plays no
role for the corresponding space. It is clear that the unit vector basis $(u_n)_n$ is a 1-unconditional
Schauder basis of $X_\mc F$, and  it is weakly-null if and only if  $\mc F$ is pre-compact. In fact,
otherwise there will be a subsequence of $(u_n)_n$ 1-equivalent to the unit basis of $\ell_1$. So,
Schreier-like spaces will be assumed to be constructed from pre-compact families. It follows then that for
pre-compact families $\mc F$, the space $X_\mc F$ is $c_0$-saturated. This can be seen, for example, by using
Pt\'{a}k's Lemma, or by the fact that $X_\mc F=X_{\widehat{F}} \hookrightarrow C(\widehat{\mc F})$ isometrically,
and the fact that the function spaces $C(K)$ for $K$ countable are $c_0$-saturated, by a classical result of
A. Pelczynski and Z. Semadeni \cite{Pelczynski-Semadeni}.

Observe that the unit basis of the \emph{Schreier space} $X_\mc S$ generates an  $\ell_1$-spreading model, so
no subsequence of it can be Ces\`{a}ro-convergent.   In fact, the same holds for the Schreier-like space $X_\mc F$ of
an arbitrary large family $\mc F$.   However, it was proved by M. Gonz\'alez and J. Guti\'errez in \cite{GG} that
the convex hull of a Banach-Saks subset of the Schreier space $X_\mc S$ is again Banach-Saks. In fact, we
will see in Subsection \ref{generalized-Schreier} that the same holds for the spaces $X_\mc F$ where $\mc F$
is a generalized Schreier family.         Still, a possible counterexample for Question \ref{qu1} has to be a
Schreier like space, as we see from the following characterization.

\begin{thm}\label{erioeiofjioedf}
 The following are equivalent:
\begin{enumerate}
\item[(a)] There is a normalized weakly-null sequence having the Banach-Saks property and  whose convex hull is not  a Banach-Saks set.
\item[(b)] There is a Shreier-like space $X_\mc F$ such that its unit basis $(u_n)_n$ is Banach-Saks and its convex hull is not.
\item[(c)] There is a compact  and hereditary family $\mc F$ on $\N$ such that:
\begin{enumerate}
\item[(c.1)]  $\mc F$ is not large in any $M\con \N$.
\item[(c.2)]  There is a partition $\bigcup_n I_n=\N$ in finite sets $I_n$   a probability measure $\mu_n$
on $I_n$ and $\de>0$ such that the set
\begin{equation}
\label{j4ijirjtf}\mc G_\de^{\bar \mu}(\mc F):=\conj{t\con \N}{\text{there is $s\in \mc F$ such that
 $\min_{n\in t}\mu_n(s\cap I_n)\ge \de$}}
\end{equation}
is large.
\end{enumerate}
\end{enumerate}
\end{thm}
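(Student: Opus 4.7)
The main tool is Theorem \ref{char1}, which translates the Banach-Saks property of a weakly-null sequence into the non-largeness of its $\vep$-support families. The easy implication is $(b)\Rightarrow(a)$: if $(u_n)_n$ is Banach-Saks in $X_\mc F$, then by Proposition \ref{BS->rwc} the family $\mc F$ is pre-compact and $(u_n)_n$ is a normalized weakly-null sequence. The content is the cycle $(a)\Rightarrow(c)\Rightarrow(b)$.

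For $(c)\Rightarrow(b)$, I would work in $X_\mc F$ with the canonical $1$-norming set $\mc N\subset B_{X_\mc F^*}$ of signed sums $\sum_{n\in s}\vep_n u_n^*$ indexed by $s\in\mc F$; then $\mc F_\vep((u_n)_n,\mc N)=\mc F$ for $0<\vep\le 1$, so (c.1) combined with Theorem \ref{char1}(d) gives that $(u_n)_n$ is Banach-Saks. To witness the failure of Banach-Saks in $\conv(\{u_n\}_n)$, set $y_k:=\sum_{m\in I_k}\mu_k(m)u_m$. Disjointness of the $I_k$ yields, for same-sign scalars $(a_n)_{n\in t}$ and any $s\in\mc F$,
\begin{equation*}
\Bigl\|\sum_{n\in t}a_n y_n\Bigr\|_{\mc F}\ge\sum_{m\in s}\Bigl|\sum_{n\in t}a_n\mu_n(m)\mathbf{1}_{I_n}(m)\Bigr|=\sum_{n\in t}|a_n|\,\mu_n(s\cap I_n),
\end{equation*}
which is $\ge\de\sum_n|a_n|$ whenever $t\in\mc G_\de^{\bar\mu}(\mc F)$ with $s$ witnessing. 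Since the $y_k$ have disjoint supports in the $1$-unconditional basis $(u_n)_n$, the standard sign splitting extends the bound to arbitrary scalars up to a factor $1/2$. Because $\mc G_\de^{\bar\mu}(\mc F)$ is large and hereditary, Theorem \ref{classif1} furnishes an infinite $M$ with ${}_*(\mc S\rest M)\subset\mc G_\de^{\bar\mu}(\mc F)$, so $(y_k)_{k\in M}$ generates an $\ell_1$-spreading model and Theorem \ref{char1}(b) concludes.

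For $(a)\Rightarrow(c)$, let $(x_n)_n$ be a normalized weakly-null Banach-Saks sequence with non-Banach-Saks convex hull and pick $(z_k)_k\subset\conv(\{x_n\}_n)$ with no Ces{\`a}ro-convergent subsequence. By Proposition \ref{BS->rwc}, Rosenthal's $\ell_1$ theorem and Theorem \ref{char1}, after subsequencing $(z_k)_k$ is weakly-null and generates an $\ell_1$-spreading model with constant $c>0$. A gap-blocking subsequence preserves the spreading-model constant (Schreierness is inherited along any subsequence) and gives pairwise disjoint supports $J_k\subset\N$; enlarging them to a partition $\bigcup_n I_n=\N$ yields probability measures $\mu_k$ on $I_k$ with $z_k=\sum_{m\in I_k}\mu_k(m)x_m$. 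Fix a $1$-norming $\mc N\subset B_{X^*}$ and set $\mc F:=\widehat{\mc F_\vep((x_n)_n,\mc N)}$ with $\vep$ to be tuned to $c$; then (c.1) is Theorem \ref{char1}(d) applied to $(x_n)_n$. For (c.2): given a Schreier $t$ with $\#t\le\min t$, separate by $x^*\in\mc N$ with $\sum_{n\in t}|x^*(z_n)|\ge c'\#t$ for some $c'>0$ depending on $c$ and the norming constant of $\mc N$; a first Markov step extracts $t_1\subset t$ of size $\ge\#t/2$ with $|x^*(z_n)|\ge c'/2$ on $t_1$. Expanding $x^*(z_n)=\sum_{m\in I_n}\mu_n(m)x^*(x_m)$, a second Markov step produces $s_n\subset I_n$ with $\mu_n(s_n)\ge c'/4$ and $|x^*(x_m)|\ge c'/4$ on $s_n$. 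Choosing $\vep:=c'/4$, the set $s:=\{m:|x^*(x_m)|\ge c'/4\}$ lies in $\mc F$ and satisfies $\mu_n(s\cap I_n)\ge c'/4$ for every $n\in t_1$, so $t_1\in\mc G_{c'/4}^{\bar\mu}(\mc F)$; since $\#t$ is unbounded inside any infinite $M$, $\mc G_{c'/4}^{\bar\mu}(\mc F)$ is large.

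The technical crux in $(a)\Rightarrow(c)$ is matching constants across the nested Markov steps: the $\vep$ defining $\mc F$ must equal the lower bound emerging from the inner Markov step, so that the set $s$ carved out actually lies in $\mc F_\vep((x_n)_n,\mc N)$ and hence in $\mc F$. Tuning $\vep$ and $\de$ so that the \emph{same} family $\mc F$ simultaneously witnesses (c.1) (via Theorem \ref{char1}(d) for the Banach-Saks sequence $(x_n)_n$) and (c.2) (via the largeness of $\mc G_\de^{\bar\mu}(\mc F)$) is the delicate point of the argument.
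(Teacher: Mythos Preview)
Your $(c)\Rightarrow(b)$ is correct and essentially matches the paper's argument; the paper computes $\mc F_\de((y_k)_k,\mc N)\supseteq \mc G_\de^{\bar\mu}(\mc F)$ and invokes Theorem~\ref{char1}(c) directly, while you verify the $\ell_1$-spreading model inequality by hand and then appeal to Theorem~\ref{classif1}. Both routes work.

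There is, however, a genuine gap in your $(a)\Rightarrow(c)$. You assert that ``after subsequencing $(z_k)_k$ is weakly-null'' and that ``a gap-blocking subsequence \ldots\ gives pairwise disjoint supports $J_k$''. Neither step is justified. The sequence $(z_k)_k$ lives in the relatively weakly compact set $\conv(\{x_n\}_n)$, so a subsequence converges weakly to some $z$ in the closed convex hull, but there is no reason for $z=0$; and $(z_k-z)_k$, while weakly-null, is no longer in $\conv(\{x_n\}_n)$, so you cannot simply write it as $\sum_{m\in I_k}\mu_k(m)x_m$ with a probability $\mu_k$. Even granting weak-nullity, the finite index-supports $F_k$ of the $z_k$ need not become disjoint along any subsequence: take $z_k=\tfrac{1}{k}x_1+(1-\tfrac{1}{k})x_{k+1}$, which is weakly-null yet every $F_k$ contains $1$. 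A gliding hump gives only an \emph{approximation} by a disjointly supported sequence, and the perturbed sequence is not a priori a convex combination of the $x_n$.

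This is exactly what the paper's Lemma~\ref{lem-3}(b) supplies: one passes to a subsequence along which the coefficient sequences $(\lambda_k^{(n)})_k$ converge pointwise, subtracts the limit, approximates by a genuine block sequence via a gliding hump (using $\sum_n\|y_n-z_n\|<\infty$ and part (a) of the lemma), and then splits into positive and negative parts and renormalizes so as to land back in $\conv(\{x_n\}_n)$. Only then does one have a block sequence in $\conv(\{x_n\}_n)$ that fails Banach-Saks, from which the partition $(I_n)_n$ and the measures $\mu_n$ can be read off. Once this reduction is in place, the paper reaches (c.2) more directly than you do: since the block sequence $(z_n)_n$ is not Banach-Saks, Theorem~\ref{char1}(c) gives an $\vep>0$ with $\mc F_\vep((z_n)_n)$ large, so for any $M$ and $m$ one finds $x^*\in B_{X^*}$ with $\#\{n\in M:|x^*(z_n)|\ge\vep\}\ge m$ outright, and a single Markov step (your ``second'' one) yields $\mu_n(\{k\in I_n:|x^*(x_k)|\ge\vep/2\})\ge\vep/2$. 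Your two-stage Markov argument via the spreading-model constant is salvageable but unnecessary, and your claimed bound $\#t_1\ge\#t/2$ in the first Markov step is not quite right (the correct proportion is $\tfrac{c/2}{1-c/2}$), though any positive proportion suffices for largeness.
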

For the proof we need the following useful result.
\begin{lemma}\label{lem-3}
Let $(x_n)_n$ and $(y_n)_n$ be two bounded sequences in a Banach space $X$.
\begin{enumerate}
\item[(a)] If $\sum_n\nrm{x_n-y_n}<\infty$, then $\{x_n\}_n$ is Banach-Saks if and only if $\{y_n\}_n$ is
Banach-Saks.
\item[(b)] $\conv(\{x_n\}_n)$ is a Banach-Saks set if and only if every block sequence in $\conv(\{x_n\}_n)$ has
the Banach-Saks property.
\end{enumerate}
\end{lemma}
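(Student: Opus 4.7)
For part (a), I argue directly: any sequence in $\{y_n\}_n$, after passing either to a constant subsequence (trivially Ces\`aro-convergent) or to a strictly increasing one $(y_{n_k})_k$, admits by the Banach-Saks property of $\{x_n\}_n$ a further subsequence $(x_{n_{k_j}})_j$ Ces\`aro-convergent to some $x\in X$. The summability hypothesis then yields
$$\left\|\frac{1}{N}\sum_{j=1}^N (y_{n_{k_j}} - x_{n_{k_j}})\right\|\le \frac{1}{N}\sum_{n\in\N}\|x_n-y_n\|\longrightarrow 0,$$
so $(y_{n_{k_j}})_j$ is Ces\`aro-convergent to $x$ as well. The reverse implication follows by symmetry.

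For part (b), the forward direction is immediate since the Banach-Saks property is hereditary and every block sequence in $\conv(\{x_n\}_n)$ is a subset of it. For the converse, I first apply the hypothesis to the trivial block sequence $(x_n)_n$ itself to conclude that $\{x_n\}_n$ is Banach-Saks, hence relatively weakly compact by Proposition \ref{BS->rwc}; Krein-\v{S}mulian's theorem then gives that $\conv(\{x_n\}_n)$ is relatively weakly compact. By Theorem \ref{char1}(b), it suffices to rule out the existence of a weakly convergent sequence $(z_k)_k\to z$ in $\conv(\{x_n\}_n)$ generating an $\ell_1$-spreading model.

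Assuming such a sequence exists, the plan is to extract a subsequence $(z_{k_j})_j$ together with a block convex combination sequence $(v_j)_j$ in $\conv(\{x_n\}_n)$ (with successive $(x_n)$-supports $F_1<F_2<\cdots$) satisfying $\sum_j\|z_{k_j}-v_j\|<\infty$. The hypothesis then forces $(v_j)_j$ to be Banach-Saks, and part (a) transfers this property to $(z_{k_j})_j$, contradicting the $\ell_1$-spreading model condition.

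The technical core is the construction of $(v_j)_j$. Fixing representations $z_k=\sum_i a_i^k x_i$ (finite sums, $a_i^k\ge 0$, $\sum_i a_i^k=1$), a diagonal extraction yields pointwise limits $a_i^k\to \alpha_i\ge 0$ with $\sum_i\alpha_i\le 1$. In the case $\alpha_i=0$ for every $i$, I inductively choose $k_j$ and cutoffs $N_{j-1}<N_j$ so that $\sum_{i\le N_{j-1}} a_i^{k_j}\le 2^{-j}$; normalizing $\sum_{i\in(N_{j-1},N_j]}a_i^{k_j}x_i$ so that its coefficients sum to one produces $v_j$, and the boundedness of $\{x_n\}_n$ (itself Banach-Saks, hence bounded) makes $\|z_{k_j}-v_j\|$ geometrically small. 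The case $\sum_i\alpha_i>0$ is the main obstacle: it requires first isolating a persistent ``core'' component common to all $z_k$, showing by relative weak compactness and weak convergence that this core converges in norm, and then iterating the previous gliding-hump argument on the complementary part.
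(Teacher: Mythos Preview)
Part (a) is fine, and your reduction in (b) via Proposition~\ref{BS->rwc}, Krein--\v{S}mulian and Theorem~\ref{char1}(b) is correct, if slightly heavier than the paper's direct contrapositive. The gap is in your handling of the case $\sum_i\alpha_i>0$.

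Your stated plan is to produce a block sequence $(v_j)_j$ in $\conv(\{x_n\}_n)$ with $\sum_j\|z_{k_j}-v_j\|<\infty$. This is impossible in general when $\sum_i\alpha_i>0$. Concretely, take the paper's own $T$-family $\mc F$, let $(u_n)_n$ be the unit basis of $X_{\mc F}$, and set $z_k:=\tfrac12 u_1+\tfrac12\bigl(\tfrac{1}{\#I_k}\sum_{i\in I_k}u_i\bigr)$. Then $(z_k)_k$ is weakly convergent and generates an $\ell_1$-spreading model (the difference sequence $(z_k-\tfrac12 u_1)_k$ coincides with half the bad averages), and $\alpha_1=\tfrac12$. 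But for any block sequence $(v_j)_j$ in $\conv(\{u_n\}_n)$ with supports $F_1<F_2<\cdots$, one has $1\notin F_j$ for $j\ge 2$, so by $1$-unconditionality $\|z_{k_j}-v_j\|\ge |(z_{k_j})_1-(v_j)_1|=\tfrac12$, and the sum diverges. Your vague description of ``iterating the gliding-hump on the complementary part'' does not help: the complementary coefficients $a_i^k-\alpha_i$ are signed, so the resulting block vector is no longer in $\conv(\{x_n\}_n)$, and the hypothesis does not apply to it.

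The paper resolves this precisely by abandoning the attempt to stay inside $\conv(\{x_n\}_n)$ at the approximation step. One subtracts the pointwise limit $(\lambda_k)_k$ of the coefficient sequences and glides to a \emph{signed} block vector $z_n=\sum_{k\in s_n}\eta_k^{(n)}x_k$ with $\sum_k|\eta_k^{(n)}|\le 2$; part (a) transfers the non-Banach--Saks property to $(z_n)_n$. Only then does one split $z_n=z_n^{(0)}+z_n^{(1)}$ into its positive- and negative-coefficient parts, observe that one of the two sets $\{z_n^{(i)}\}_n$ must fail to be Banach--Saks, pass to a subsequence along which the positive (say) coefficient sums are essentially constant $\eta>0$, and finally renormalise by $\eta^{-1}$ to land back in $\conv(\{x_n\}_n)$. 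This positive/negative split followed by renormalisation is the missing idea in your outline.
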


\begin{proof} The proof of (a) is straightforward. Let us concentrate in (b):  Suppose that $\conv(\{x_n\}_n)$ is not Banach-Saks, and let $(y_n)_n$ be a sequence in $\conv(\{x_n\}_n)$
without Ces\`{a}ro-convergent subsequences. Write $y_n:=\sum_{k\in F_n}\la_k^{(n)}x_k$, $(\la_k^{(n)})_{k\in
F_n}$ a convex combination, for each $n$. By a Cantor diagonalization process we find $M$ such that
$((\lambda_{k}^{(n))_{k\in \N}})_{n\in M}$ converges pointwise to a (possibly infinite) convex sequence
$(\la_k)_k \in B_{\ell_1}$.   Set $\mu_k^{(n)}:=\la_k^{(n)}-\la_k$ for each $n\in M$.  Then there is an
infinite subset $N\con M$ and a block sequence $((\eta_{k}^{(n)})_{k\in s_n})_{n\in N}$, $\sum_{k\in
s_n}|\eta_k^{(n)}|\le 2$,  such that
\begin{equation}\label{dkfmkdfmskd}
\sum_{n\in N}\sum_{k\in \N}|{\mu_k^{(n)}-\eta_k^{(n)}}|<\infty.
\end{equation}
Setting $z_n:=\sum_{k\in s_n} \eta_k^{(n)}x_k$ for each $n$, it follows from \eqref{dkfmkdfmskd} that
\begin{equation}
\sum_{n\in N}\nrm{y_n-z_n}<\infty.
\end{equation}
By (a), no subsequence of   $(z_n)_{n\in N}$ is Ces\`{a}ro-convergent.  Now set $t_n:=\conj{k\in
s_n}{\eta_k^{(n)}\ge 0}$, $u_n=s_n\setminus t_n$, $z_n^{(0)}:=\sum_{k\in t_n}{\eta_k^{(n)}}$ and
$z_n^{(1)}:=z_n-z_n^{(0)}$. Then, either $\{z_n^{(0)}\}_{n\in N}$ or $\{z_n^{(1)}\}_{n\in N}$ is not
Banach-Saks. So, without loss of generality, let us assume that $\{z_n^{(0)}\}_{n\in N}$ is not Banach-Saks.
Then, using again (a), and by going to a subsequence if needed, we may assume that $\sum_{k\in
t_n}\eta_k^{(n)}=\eta$ for every $n\in N$. It follows that the block sequence $((1/\eta)\sum_{k\in
t_n}\eta_k^{(n)}x_n)_{n\in N}$ in $\conv(\{x_n\}_n)$ does not have that Banach-Saks property.
\end{proof}
\prue[\textsc{Proof of Theorem \ref{erioeiofjioedf}}]
It is clear that (b) implies (a). Let us prove that (c) implies (b). We fix a family $\mc F$ as in (c). We
claim that $X_\mc F$ is the desired Schreier space: Let $(u_n)_n$ be the unit basis of $X_\mc F$, and let
$$\mc N:=\{\pm u_n^*\}_n\cup \conj{\sum_{n\in s}\pm u_n^*}{s\in \mc F},$$
where $(u_n^*)$ is the biorthogonal sequence to $(u_n)_n$. Then
$$\mc F_\vep((u_n),\mc N)=\mc F\cup [\N]^1$$
for every $\vep>0$, so it follows from  our hypothesis (c.1) and Theorem \ref{char1} (d) that $\{u_n\}_n$ is
Banach-Saks. Define now for each $n\in \N$, $x_n:=\sum_{k\in I_n} (\mu_n)_k u_k$. Then
$$\mc F_\de((x_n)_n,\mc N)=\mc G_\de(\mc F)$$
so $\mc F_\de((x_n)_n)$ is large, hence $\{x_n\}_n\con \conv(\{u_n\}_n)$ is not Banach-Saks.

 Finally, suppose that (a) holds and we work to
see that (c) also holds. Let $(x_n)_n$ be a weakly-null sequence in some space $X$ with the Banach-Saks
property but such that $\conv(\{x_n\}_n)$ is not Banach-Saks.  By the previous Lemma \ref{lem-3} (b), we may
assume that there is a block sequence $(y_n)_n$ with respect to $(x_n)_n$  in $\conv(\{x_n\}_n)$ without the
Banach-Saks property. By Theorem \ref{char1} there is some subsequence $(z_n)_n$ of $(y_n)_n$ and $\vep>0$
such that
\begin{equation}
\label{opj4t4rjt44}\mc F_\vep((z_n)_n)\text{ is large}.
\end{equation}
By re-enumeration if needed, we may assume that $\bigcup_n \supp z_n=\N$, where the support is taken with
respect to $(x_n)$. Let
$$\mc F:=\mc F_{\frac{\vep}2}((x_n)_n).$$ On the other hand, since
$(x_n)_n$ is weakly-null, it follows that $\mc F$ is pre-compact, and, since it is hereditary by definition,
it is compact. Again by invoking Theorem \ref{char1} we know  that $\mc F$ is not large in any $M\con \N$.  Now
let $I_n:=\supp z_n$  and let $\mu_n$ be the convex combination with support $I_n$ such that $z_n=\sum_{k\in
I_n}(\mu_n)_k x_k$ for each $n\in \N$. Then $(I_n)_n$ is a partition of $\N$ and  $\mu_n$ is a probability
measure on $I_n$.  We see now that \eqref{j4ijirjtf} holds for $\de:=\vep/2$:  Fix an infinite subset $M\con
\N$, and fix $m\in \N$. By \eqref{opj4t4rjt44}, we can find $x^*\in B_{X^*}$ such that
\begin{equation}
\label{j4o3rp4jr4}
s:=\conj{n\in M}{|x^*(z_n)|\ge \vep }\text{ has cardinality $\ge m$.}
\end{equation}
We claim that $s\in \mc G_{\vep/2}^{\bar \mu}(\mc F)$: Fix $n\in s$, and let $s_n:=\conj{k\in
I_n}{|x^*(x_k)|\ge \vep/2}$ and $t_n:=I_n\setminus s_n$. Then
\begin{align*}
\vep \le x^*(z_n)\le \sum_{k\in s_n}(\mu_n)_k+\sum_{k\in t_n} (\mu_n)_k \frac{\vep}2 \le  \sum_{k\in s_n}(\mu_n)_k+\frac{\vep}2
\end{align*}
hence $\mu_n(s_n)\ge \vep/2$, and so $s\in \mc G_{\vep/2}^{\bar \mu}(\mc F)$.
\fprue

\section{Stability under convex hull: positive results}\label{positive results}

Recall that  a Banach space $X$ is said to have the weak Banach-Saks property if every weakly convergent
sequence in $X$ has a Ces\`aro convergent subsequence. Equivalently, every weakly compact set in $X$ has
\pbs. Examples of Banach spaces with the weak Banach-Saks property but without the Banach-Saks property are
$L^1$ and $ c_0 $ (see \cite{Szlenk}).

The following simple observation provides our first positive result concerning the stability of Banach-Saks sets under convex hulls.

\begin{prop} \label{prop-1}
Let $X$ be Banach space with the weak Banach-Saks property. Then the convex hull of a Banach-Saks subset of
$X$ is also Banach-Saks.
\end{prop}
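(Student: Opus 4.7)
The plan is to combine Proposition \ref{BS->rwc}, the Krein-\v Smulian theorem, and the characterization of the weak Banach-Saks property in a single short chain of implications. Let $A$ be a Banach-Saks subset of $X$.

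First I would invoke Proposition \ref{BS->rwc} to conclude that $A$ is relatively weakly compact in $X$. Next, I would apply the Krein-\v Smulian theorem, which asserts that the convex hull of a relatively weakly compact subset of a Banach space is again relatively weakly compact; this gives that $\mathrm{conv}(A)$ is relatively weakly compact.

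The final step is to exploit the weak Banach-Saks property of $X$: as recalled in the paragraph preceding the statement, this property is equivalent to every relatively weakly compact subset of $X$ being Banach-Saks. Indeed, if $B \subset X$ is relatively weakly compact and $(x_n)_n$ is any sequence in $B$, by the Eberlein-\v Smulian theorem $(x_n)_n$ has a weakly convergent subsequence, and the weak Banach-Saks property then yields a further Ces\`aro-convergent subsequence. Applying this with $B = \mathrm{conv}(A)$ finishes the argument.

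There is no real obstacle here; the statement is a direct synthesis of three standard facts (Banach-Saks $\Rightarrow$ relatively weakly compact, Krein-\v Smulian, and the equivalent formulation of the weak Banach-Saks property). The only point worth a brief sentence in the write-up is verifying the equivalence between the sequential form of the weak Banach-Saks property and the statement that every weakly compact set is Banach-Saks, which is the Eberlein-\v Smulian argument sketched above.
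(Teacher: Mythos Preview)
Your proposal is correct and follows exactly the same route as the paper's proof: relative weak compactness of $A$ via Proposition~\ref{BS->rwc}, then Krein--\v{S}mulian to pass to $\mathrm{conv}(A)$, and finally the weak Banach-Saks property of $X$ to conclude. The only difference is that you spell out the Eberlein--\v{S}mulian step behind the equivalence ``weakly compact $\Rightarrow$ Banach-Saks,'' which the paper simply assumes from the definition given just before the statement.
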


\begin{proof}
If $A \subseteq X$ has \pbs, then $A$ is relatively weakly compact. Therefore, by Krein-\v{S}mulian's
Theorem, $\conv(A)$ is also relatively weakly compact. Since $X$ has the weak Banach-Saks property, it
follows that $\conv(A)$ has \pbs.
\end{proof}

However, the weak Banach-Saks property is far   from being a necessary condition. For instance, the Schreier
space $X_{{\mc S}}$ does not have the weak Banach-Saks property \cite{Szlenk}, but the convex hull of any
Banach-Saks set is again a Banach-Saks set (see \cite[Corollary 2.1]{GG}). In Section
\ref{generalized-Schreier}, we will see that this result can be extended to generalized Schreier spaces.

%
%

Another partial result is the following.
\begin{prop}\label{Teor-1}
Let $(x_n)_n$ be a sequence in a Banach space $X$ such that every subsequence is Ces{\`a}ro convergent. Then
$\conv(\{x_n\})$ is a Banach-Saks set.
\end{prop}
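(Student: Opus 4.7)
The plan is first to reformulate the hypothesis using the equivalence noted just before Theorem \ref{char1} (due to Mercourakis): the condition that every subsequence of $(x_n)_n$ is Ces\`{a}ro-convergent is equivalent to $(x_n)_n$ being uniformly weakly-convergent to some limit $x\in X$. I will denote by $n(\cdot)$ a function witnessing this, meaning that $\#\{n\in\N : |x^*(x_n-x)|\ge \vep\}\le n(\vep)$ for every $\vep>0$ and every $x^*\in B_{X^*}$. By Lemma \ref{lem-3}(b), to show that $\conv(\{x_n\}_n)$ is Banach-Saks it suffices to check that every block sequence (with respect to $(x_n)_n$) in $\conv(\{x_n\}_n)$ is a Banach-Saks set. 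I therefore fix such a block sequence $(y_k)_k$, writing $y_k=\sum_{n\in F_k}\la_n^{(k)}x_n$ with $(F_k)_k$ pairwise disjoint finite subsets of $\N$ and each $(\la_n^{(k)})_{n\in F_k}$ a convex combination.

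The heart of the argument is to show that $(y_k)_k$ is itself uniformly weakly-convergent to $x$, which by the discussion preceding Theorem \ref{char1} ensures that every subsequence of $(y_k)_k$ is Ces\`{a}ro-convergent, and in particular that $\{y_k\}_k$ has the Banach-Saks property. To this end, I would fix $\vep>0$ and $x^*\in B_{X^*}$ and suppose $|x^*(y_k-x)|\ge \vep$. Then
$$\vep\le \sum_{n\in F_k}\la_n^{(k)}|x^*(x_n-x)|\le \frac{\vep}{2}+\sum_{\substack{n\in F_k\\ |x^*(x_n-x)|\ge \vep/2}}\la_n^{(k)}|x^*(x_n-x)|,$$
so there exists some $n_k\in F_k$ with $|x^*(x_{n_k}-x)|\ge \vep/2$. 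By pairwise disjointness of $(F_k)_k$, the assignment $k\mapsto n_k$ is injective into the set $\{n : |x^*(x_n-x)|\ge \vep/2\}$, whose cardinality is at most $n(\vep/2)$; hence $\#\{k : |x^*(y_k-x)|\ge \vep\}\le n(\vep/2)$, as required.

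I do not anticipate a significant obstacle: once Lemma \ref{lem-3}(b) and the equivalence between ``every subsequence Ces\`{a}ro-convergent'' and uniform weak convergence are in place, the argument becomes essentially bookkeeping. The crucial observation is that the disjointness of the blocks $(F_k)_k$ converts the uniform weak convergence of $(x_n)_n$ into that of $(y_k)_k$, by transporting each ``bad'' index $k$ for $(y_k)_k$ to a ``bad'' index $n_k$ for the original sequence in an injective fashion.
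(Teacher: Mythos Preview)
Your proposal is correct and follows essentially the same route as the paper: invoke Mercourakis' equivalence, reduce via Lemma~\ref{lem-3}(b) to block sequences $(y_k)_k$ in $\conv(\{x_n\}_n)$, and then show that $(y_k)_k$ is itself uniformly weakly-convergent to $x$ by producing, for each ``bad'' index $k$, a ``bad'' index $n_k\in F_k$ for the original sequence, with injectivity coming from disjointness of the $F_k$. One small remark: your detour through the $\vep/2$-splitting is unnecessary---since $y_k-x$ is a convex combination of the $x_n-x$, the inequality $|x^*(y_k-x)|\ge\vep$ already forces some $n_k\in F_k$ with $|x^*(x_{n_k}-x)|\ge\vep$, which is exactly what the paper uses and gives the bound $n(\vep)$ rather than $n(\vep/2)$.
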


\begin{proof}
As we mentioned in Section 2, the hypothesis is equivalent to saying that $(x_n)_n$ is uniformly
weakly-convergent to some $x\in X$ \cite[Theorem 1.8]{Mercourakis}. Now, by Lemma \ref{lem-3} (b), it
suffices to prove that every block sequence $(y_n)_n$ with respect to $(x_n)_n$ in $\conv(\{x_n\}_n)$ is
Banach-Saks. Indeed we are going to see that such sequence $(y_n)_n$ is uniformly weakly-convergent to x. Fix
$\vep>0$, and let $m$ be such that
\begin{equation}
\label{i4ijti4jtr}
\mc F_\vep((x_n)_n)\con [\N]^{\le m}.
\end{equation}
We claim that $\mc F((y_n)_n,\vep)\con [\N]^{\le m}$ as well: So, let $x^*\in B_{X^*}$ and define
$s:=\conj{n\in \N}{|x^*(y_n-x)|\ge \vep}$. Using that $\{y_n\}_n\con \conv(\{x_n\}_n)$ we can find for each
$n\in s$,an integer $l(n)\in \N$  such that $|x^*(x_{l(n)}-x)|\ge \vep$.   Since $(y_n)_n$ is a block
sequence with respect to $(x_n)_n$, it follows that $(l(n))_{n\in s}$ is a 1-1 sequence. Finally, since
$\{l(n)\}_{n\in s}\in \mc F_\vep((x_n)_n)$, it follows from \eqref{i4ijti4jtr} that $\#s\le m$.
\end{proof}
It is worth to point out  that the hypothesis and conclusion in the previous proposition are not equivalent:
The unit basis of the space $(\bigoplus_n \ell_1^n)_{c_0}$ is not uniformly weakly-convergent (to 0) but its
convex hull is a Banach-Saks set.

Recall that for a $\sigma$-field $\Sigma$ over a set $\Omega$ and a Banach space $X$, a function $\mu:\Sigma\rightarrow X$ is called a (countably additive) vector measure if it satisfies
\begin{enumerate}
  \item $\mu(E_1\cup E_2)=\mu(E_1)+\mu(E_2)$, whenever $E_1,E_2\in \Sigma$ are disjoint, and
  \item for every pairwise disjoint sequence $(E_n)_n$  in $\Sigma$ we have that $\mu(\cup_{n=1}^\infty E_n)=\sum_{n=1}^\infty \mu(E_n)$ in the norm of
  $X$.
\end{enumerate}

\begin{prop}\label{vectormeasure}
If a Banach-Saks set $A$ is contained in the range of some vector measure, then $\conv(A)$ is also Banach-Saks.
\end{prop}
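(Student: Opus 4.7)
The plan is to express elements of $\conv(A)$ as integrals against the vector measure and then exploit the weak Banach-Saks property of $L^{1}$ together with the bounded convergence theorem for vector integrals. The relative weak compactness of $\conv(A)$ comes essentially for free: Proposition \ref{BS->rwc} yields that $A$ is relatively weakly compact, and then the Krein-\v{S}mulian theorem gives that $\conv(A)$ is also relatively weakly compact.

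To verify the Banach-Saks property, let $\mu\colon \Sigma \to X$ denote a vector measure with $A \subseteq \range(\mu)$, fix a Rybakov control measure $\lambda$ for $\mu$ (whose existence is guaranteed by the Bartle-Dunford-Schwartz theorem), and let $(y_n)_n$ be an arbitrary sequence in $\conv(A)$. Writing each $y_n = \sum_{i} c_i^{(n)} \mu(E_i^{(n)})$ as a finite convex combination, I obtain $y_n = \int f_n\, d\mu$ with $f_n:=\sum_i c_i^{(n)} \mathbbm{1}_{E_i^{(n)}}$ a simple function satisfying $0 \le f_n \le 1$.

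Since $(f_n)_n$ is uniformly bounded in $L^{\infty}(\lambda)$, it is uniformly integrable in $L^{1}(\lambda)$, so the Dunford-Pettis theorem delivers a subsequence converging weakly in $L^{1}(\lambda)$ to some $f$. Invoking Szlenk's theorem that $L^{1}(\lambda)$ has the weak Banach-Saks property, I can pass to a further subsequence whose Ces\`aro averages $g_n := (1/n)\sum_{k=1}^n f_k$ converge to $f$ in $L^{1}$-norm, and in particular in $\lambda$-measure. Since the $g_n$ remain uniformly bounded by $1$, Bartle's bounded convergence theorem for vector integrals ensures that $\int g_n\, d\mu \to \int f\, d\mu$ in the norm of $X$. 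Because $\int g_n\, d\mu = (1/n)\sum_{k=1}^n y_k$, this produces the required Ces\`aro-convergent subsequence.

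The main technical subtlety I anticipate is that a vector measure need not be of bounded variation, so the integration operator $g\mapsto \int g\, d\mu$ is only a priori continuous on $L^{\infty}(\lambda)$ and not on all of $L^{1}(\lambda)$. One therefore cannot directly transport the weak $L^{1}$-convergence of $(f_n)_n$ into $X$; the upgrade to $L^{1}$-norm (hence $\lambda$-measure) convergence via Szlenk is essential, since only then does the uniform $L^{\infty}$-bound allow Bartle's bounded convergence theorem to close the argument.
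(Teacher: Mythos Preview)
Your argument is correct, and it takes a genuinely different route from the paper's. The paper proves the proposition in two lines by quoting two black-box theorems: Diestel--Seifert (every subset of the range of a vector measure is Banach--Saks) and Kluv\'anek--Knowles (the closed convex hull of the range of a vector measure $\mu$ is contained in the range of another vector measure $\mu'$). Chaining these, $\conv(A)\subseteq\conv(\mu(\Sigma))\subseteq\mu'(\Sigma')$ is Banach--Saks.

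What you do instead is essentially to \emph{unpack} the Diestel--Seifert argument and run it directly on convex combinations: represent each $y_n\in\conv(A)$ as $\int f_n\,d\mu$ with $0\le f_n\le 1$, invoke Szlenk's weak Banach--Saks theorem in $L^1(\lambda)$, and then push the $L^1$-Ces\`aro convergence through the integral via Bartle's bounded convergence theorem. This avoids the Kluv\'anek--Knowles convexity result entirely, at the price of reproducing the analytic core of Diestel--Seifert. Your handling of the one genuine subtlety (the integration map is not $L^1$-continuous, so one needs Ces\`aro-norm convergence plus the uniform $L^\infty$ bound, not merely weak $L^1$ convergence) is exactly right.

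Two minor remarks. First, your opening paragraph on relative weak compactness via Krein--\v{S}mulian is superfluous: once you exhibit a Ces\`aro-convergent subsequence of an arbitrary sequence in $\conv(A)$, you have verified Definition~\ref{def-1} directly. Second, neither your proof nor the paper's actually uses the hypothesis that $A$ itself is Banach--Saks; containment in the range of a vector measure already forces $\conv(A)$ to be Banach--Saks.
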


\begin{proof}
J. Diestel and C. Seifert proved in \cite{Diestel-Seifert76} that every set contained in the range of a vector measure is
Banach-Saks. Although the range of a vector measure $\mu(\Sigma)$ need no be a convex set, by a classical result of I. Kluvanek and G. Knowles
\cite[Theorems IV.3.1 and V.5.1]{KK}, there is always a (possibly different) vector measure $\mu'$ whose range contains the convex hull
of $\mu(\Sigma)$. Thus if a set $A$ is contained in the range of a vector measure, then $\conv(A)$ is also a Banach-Saks set.
\end{proof}

However, there are Banach-Saks sets which are not the range of a vector measure: consider for instance the unit ball of $\ell_p$ for $1<p<2$ \cite{Diestel-Seifert76}.

\subsection{A result for generalized Schreier spaces}\label{generalized-Schreier}
We present here a positive answer to Question 1 for a large class of Schreier-like spaces, the spaces
$X_\al:=X_{\mc S_\al}$ constructed from the generalized Schreier families $\mc S_\al$ for a countable ordinal
number $\al$.

Recall that given two families $\mc F$ and $\mc G$ on $\N$, we define
\begin{align*}
\mc F \oplus \mc G:=&\conj{s\cup t}{s\in \mc G,\, t\in \mc F \text{ and $s<t$}} \\
\mc F\otimes \mc G :=&\conj{s_0\cup \dots \cup s_n}{(s_i) \text{ is a block sequence in $\mc F$ and $\{\min s_i\}_{i\le n}\in \mc G$}},
\end{align*}
where $s<t$ means that $\max s<\min t$.
\begin{defn}
For each countable limit ordinal number $\al$ we fix a strictly increasing sequence $(\be^{(\al)}_n)_n$ such
that $\sup_n \be_n^{(\al)}=\al$.  We define now
\begin{enumerate}
\item[(a)] $\mc S_0:= [\N]^{\le 1}$.
\item[(b)] $\mc S_{\al+1}=\mc S_\al \otimes \mc S$.
\item[(c)] $\mc S_\al:= \bigcup_{n\in \N} \mc S_{\be_n^{(\al)}}\rest [n+1,\infty[$.
\end{enumerate}
\end{defn}
Then each $S_\al$ is a compact, hereditary and spreading family with Cantor-Bendixson rank equal to
$\om^\al$. These families have been widely used in Banach space theory. As an example of their important role
we just mention that given a pre-compact family $\mc F$ there exist an infinite set $M$, a countable ordinal
number $\al$ and $n\in \N$ such that $\mc S_\al \otimes [M]^{\le n}\con \mc F[M]\con \mc S_{\al}\otimes
[M]^{\le n+1} $. It readily follows that every subsequence of the unit basis of $X_\mc F$ has a subsequence
  equivalent to a subsequence of the unit basis of $X_\al$. The main result of this part is the following.

\begin{thm}\label{ioo34iji4jtr}
Let $\alpha$ be a countable ordinal number. $A\con X_\al$   has the Banach-Saks property if and only if $\conv(A)$ has the Banach-Saks property.
\end{thm}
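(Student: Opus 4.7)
Plan. The implication ``$\conv(A)$ Banach-Saks $\Si$ $A$ Banach-Saks'' is immediate, since $A\con\conv(A)$ and the Banach-Saks property is hereditary. The substantive claim is the converse, which I would prove by contradiction using the characterization of potential counterexamples from Theorem \ref{erioeiofjioedf}. Suppose $A\con X_\al$ is Banach-Saks but $\conv(A)$ is not. Since any failing sequence in $\conv(A)$ uses only countably many points of $A$, fix a sequence $(x_n)_n\con A$ with $\conv(\{x_n\}_n)$ not Banach-Saks. By Lemma \ref{lem-3}(b) there is a block sequence $(y_m)_m$ with respect to $(x_n)_n$ in $\conv(\{x_n\}_n)$ that is not Banach-Saks, and the proof that (a) implies (c) of Theorem \ref{erioeiofjioedf} then produces some $\vep,\de>0$, a partition $\N=\bigsqcup_n I_n$ into finite sets, and probability measures $\mu_n$ on $I_n$ such that $\mc G_\de^{\bar\mu}(\mc F)$ from \eqref{j4ijirjtf} is large, where $\mc F:=\mc F_{\vep/2}((x_n)_n)$. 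At the same time, since $\{x_n\}_n\con A$ is Banach-Saks, Theorem \ref{char1}(c) forces $\mc F$ not to be large in any infinite $M\con\N$.

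The feature of $X_\al$ I would exploit is that the set $\mc N_\al:=\{\sum_{k\in s}\epsilon_k u_k^*:s\in \mc S_\al,\ \epsilon_k=\pm 1\}$ is $1$-norming for $X_\al$. By Theorem \ref{char1}(d) I may replace $B_{X_\al^*}$ by $\mc N_\al$ throughout, so that the witness sets for the family $\mc F$ are genuinely elements of $\mc S_\al$. After passing to a subsequence and a trace via Theorem \ref{classif1}, I may assume that $\mc F$ is hereditary, pre-compact, and contained in $\mc S_\al$. The contradiction then reduces to the following purely combinatorial averaging statement about the generalized Schreier families, which must be proved: for every countable ordinal $\al$, every hereditary pre-compact $\mc F\con \mc S_\al$ that is not large in any $M\con\N$, every partition $\N=\bigsqcup_n I_n$ into finite sets with probability measures $\mu_n$ on the $I_n$, and every $\de>0$, the family $\mc G_\de^{\bar\mu}(\mc F)$ is not large.

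I would establish this combinatorial claim by transfinite induction on $\al$. The base case $\al=0$ is trivial since $\mc S_0=[\N]^{\le 1}$, which forces $\mc G_\de^{\bar\mu}(\mc F)\con [\N]^{\le \lfloor 1/\de\rfloor}$. The successor step uses $\mc S_{\al+1}=\mc S_\al\otimes \mc S$, decomposing each $s\in \mc F$ into a block union of $\mc S_\al$-pieces and applying the inductive hypothesis to each piece; the limit step uses $\mc S_\al=\bigcup_n \mc S_{\be_n^{(\al)}}\rest[n+1,\infty[$ together with a diagonal argument along subsequences of $\N$.

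The main obstacle I anticipate is the successor step of the induction: the convex-combination mass implicit in $\mu_n(s\cap I_n)$ can redistribute across the different $\mc S_\al$-blocks of a set $s\in \mc S_{\al+1}$, so care is needed to show that $\mc G$-largeness at level $\al+1$ forces $\mc F$-largeness at level $\al$, and not only at some strictly higher level. This is precisely the behavior that is absent in the $T$-family counterexample of Section \ref{counterexample}, and its exclusion in $X_\al$ generalizes the argument of \cite{GG} for the Schreier case $\al=1$.
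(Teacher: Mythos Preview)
Your approach diverges from the paper's and contains a real gap. The assertion that, after switching to the norming set $\mc N_\al$ and passing to a trace, the family $\mc F=\mc F_{\vep/2}((x_n)_n,\mc N_\al)$ can be taken inside $\mc S_\al$ is not justified. You are conflating two different index sets: the supports $s\in\mc S_\al$ of the functionals $f=\sum_{k\in s}\epsilon_k u_k^*$ live in $\mc S_\al$ by definition, but the elements of $\mc F$ are the level sets $\{n\in\N:|f(x_n)|\ge\vep/2\}$, indexed by the \emph{position of $x_n$ in the sequence}. Their structure depends on how each $x_n$ spreads mass over the basis of $X_\al$, and there is no reason for these sets to belong to $\mc S_\al$; Theorem~\ref{classif1} only gives the dichotomy ``bounded cardinality vs.\ contains a Schreier trace'' and says nothing about containment in $\mc S_\al$. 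Without that containment your combinatorial reduction has no content, and even granting it you concede that the successor step of the induction is the crux and leave it undone.

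The paper's proof is shorter and avoids any induction on $\al$. Its key tool is the characterization in Proposition~\ref{odjfiojdijdsfsd}: a weakly null sequence in $X_\al$ is Banach--Saks if and only if it is \emph{$<\al$-null}, i.e.\ $\nrm{y_n}_\be\to 0$ for every $\be<\al$. If $\conv(A)$ fails Banach--Saks one takes a weakly convergent sequence $(x_n)_n$ in $\conv(A)$ with limit $x$ such that $y_n:=x_n-x$ has no Ces\`aro-convergent subsequence, finds $\be<\al$, $\vep>0$ and a block sequence $(s_n)_n$ in $\mc S_\be$ with $\sum_{k\in s_n}|(y_n)_k|\ge\vep$. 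Writing $x_n=\sum_{a\in F_n}\la_a\,a$ with $F_n\con A$ and using $\sum_{k\in s_n}|(x)_k|\to 0$, a single pigeonhole on the convex combination produces $a_n\in F_n$ with $\sum_{k\in s_n}|(a_n)_k|\ge\vep/2$; Proposition~\ref{iurhtiurt} then shows $\{a_n\}_n\con A$ is not Banach--Saks. The convexity is undone pointwise on each fixed $s_n\in\mc S_\be$, so no global combinatorial statement about subfamilies of $\mc S_\al$ is needed.
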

The particular case $\al=0$ is a consequence of the weak-Banach-Saks property of $c_0$ and Proposition
\ref{prop-1}. For $\al\ge 1$ the spaces $X_\al$ are not weak-Banach-Saks. Still, Gonz\'alez and Guti\'errez
proved the case $\al=1$ in \cite{GG}. Implicitly, the case $\al<\om$ was proved by I. Gasparis and D. Leung
\cite{GL} since it follows from their result stating that every seminormalized weakly-null sequence in
$X_{\al}$, $\al<\om$, has a subsequence equivalent to a subsequence of the unit basis of $X_\be$, $\be\le
\al$. We conjecture that the same should be true for an arbitrary countable ordinal number $\al$.

The next can be proved by transfinite induction.
\begin{prop}\label{njkrjggff} Let $\be<\ou$.
\begin{enumerate}
\item[(1)]For every $\al<\be$ there is some $n\in \N$ such that $(\mc S_\al \otimes \mc S)\rest (\N / n)\con \mc S_\be$.
\item[(2)] For every $n\in \N$ there are $\al_0,\dots,\al_n<\be$ such that
$$(\mc S_\al)_{\le n}:=\conj{s\in \mc S_\be}{\min s\le n}\con \mc S_{\al_0}\oplus \cdots \oplus \mc S_{\al_n}.$$
\end{enumerate}\qed
\end{prop}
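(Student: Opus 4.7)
The plan is to prove (1) and (2) by simultaneous transfinite induction on $\be$, since (2) at $\be$ needs a form of monotonicity that is essentially provided by (1) at ordinals below $\be$. As a preliminary remark, (1) yields the following \emph{monotonicity}: for $\ga<\de<\ou$ there exists $N$ with $\mc S_\ga\rest[N,\infty[\,\con\mc S_\de$, because any $s\in\mc S_\ga$ with $\min s\ge 1$ can be viewed as a single block $s_0=s$ with $\{\min s\}\in\mc S$, giving $\mc S_\ga\rest[1,\infty[\,\con\mc S_\ga\otimes\mc S$, after which (1) absorbs $\mc S_\ga\otimes\mc S$ into $\mc S_\de$ past a suitable tail.

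\textbf{Successor case $\be=\ga+1$.} For (1), either $\al=\ga$ (trivial, since $\mc S_\al\otimes\mc S=\mc S_\be$) or $\al<\ga$, and the inductive hypothesis of (1) at $\ga$ gives $m$ with $(\mc S_\al\otimes\mc S)\rest[m+1,\infty[\,\con\mc S_\ga$; composing with the monotonicity inclusion $\mc S_\ga\rest[1,\infty[\,\con\mc S_\ga\otimes\mc S=\mc S_\be$ finishes this case. For (2), any $s\in(\mc S_\be)_{\le n}$ decomposes as $s=s_0\cup\cdots\cup s_k$ with $s_i$ consecutive $\mc S_\ga$-blocks satisfying $\{\min s_i\}\in\mc S$, which forces $k+1\le\min s_0\le n$; padding with $n-k$ empty sets yields $s\in\mc S_\ga\oplus\cdots\oplus\mc S_\ga$ (with $n+1$ copies), so one takes $\al_i:=\ga<\be$.

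\textbf{Limit case $\be$ with $\be_m^{(\be)}\nearrow\be$.} Here $\mc S_\be=\bigcup_m \mc S_{\be_m^{(\be)}}\rest[m+1,\infty[$. For (1), given $\al<\be$ pick $N_0$ with $\al<\be_{N_0}^{(\be)}$; the IH on (1) at $\be_{N_0}^{(\be)}$ yields $m$ with $(\mc S_\al\otimes\mc S)\rest[m+1,\infty[\,\con\mc S_{\be_{N_0}^{(\be)}}$, and for $n=\max(m,N_0)$ any $s\subseteq[n+1,\infty[$ in $\mc S_\al\otimes\mc S$ lies in $\mc S_{\be_{N_0}^{(\be)}}\rest[N_0+1,\infty[\,\con\mc S_\be$. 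For (2), given $s\in(\mc S_\be)_{\le n}$, set $j=\min s\le n$; then $s\in\mc S_{\be_{j-1}^{(\be)}}\rest[j,\infty[$ with $\be_{j-1}^{(\be)}<\be$, and the IH on (2) at $\be_{j-1}^{(\be)}$ with parameter $j$ gives a decomposition $s=s_0\cup\cdots\cup s_j$ with $s_i\in\mc S_{\ga_i^{(j)}}$ and $\ga_i^{(j)}<\be_{j-1}^{(\be)}$. Since $j$ ranges in the finite set $\{1,\ldots,n\}$, setting $\al_i:=\max\{\ga_i^{(j)}:\,i\le j\le n\}<\be$ (a finite supremum of ordinals below the limit $\be$) and padding with empty blocks, one uses the preliminary monotonicity together with (1) applied at $\be_{j-1}^{(\be)}$ to place each $s_i$ into $\mc S_{\al_i}$, yielding the required $\al_0,\dots,\al_n<\be$.

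\textbf{Expected obstacle.} The delicate step is the limit case of (2): one must choose the $\al_i<\be$ uniformly over the parameter $j=\min s\le n$ that determines which family $\mc S_{\be_{j-1}^{(\be)}}$ houses $s$, while the monotonicity inclusions from (1) carry thresholds depending on the ordinal pair involved. The resolution is a bookkeeping argument — take common finite maxima over the finitely many values of $j$ — which succeeds precisely because $\be$ is a limit, so finite suprema of ordinals below $\be$ remain below $\be$. The inductive step of (1) and the successor case of (2) are routine.
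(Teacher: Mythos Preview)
Your overall strategy --- transfinite induction on $\be$ --- is exactly what the paper indicates (the paper supplies no details beyond the remark that this can be done by transfinite induction). Your treatment of (1) in both the successor and limit cases, and of (2) in the successor case, is correct.

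The limit case of (2), however, has a genuine gap. First, the claim that $\min s=j$ forces $s\in\mc S_{\be_{j-1}^{(\be)}}$ is not justified: the definition of $\mc S_\be$ only yields $s\in\mc S_{\be_m^{(\be)}}$ for \emph{some} $m\le j-1$, and the generalized Schreier families are not nested. More seriously, your final ``bookkeeping'' step --- upgrading each block $s_i\in\mc S_{\ga_i^{(j)}}$ to $s_i\in\mc S_{\al_i}$ with $\al_i:=\max_j\ga_i^{(j)}$ via monotonicity --- does not go through. The inclusion $\mc S_\ga\rest[N,\infty[\,\con\mc S_\de$ that you extract from (1) carries a threshold $N=N(\ga,\de)$ depending on the ordinal pair, and nothing in your decomposition forces $\min s_i\ge N$. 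Taking a finite supremum of the $\ga_i^{(j)}$ keeps the resulting $\al_i$ below the limit $\be$, but does nothing to push the blocks $s_i$ past the required thresholds.

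There is a much simpler direct argument at the limit stage, avoiding both the inductive hypothesis on (2) and any use of monotonicity. Since each $\mc S_\ga$ is hereditary it contains $\emptyset$; hence $\mc F\cup\mc G\con\mc F\oplus\mc G$ whenever $\emptyset\in\mc F\cap\mc G$ (take one of the two pieces to be $\emptyset$), and by iteration $\bigcup_{i\le k}\mc F_i\con\mc F_0\oplus\cdots\oplus\mc F_k$. Directly from the limit definition one has $(\mc S_\be)_{\le n}\con\bigcup_{m<n}\mc S_{\be_m^{(\be)}}$; taking $\al_m:=\be_m^{(\be)}<\be$ for $m<n$ and $\al_n:=0$ then gives $(\mc S_\be)_{\le n}\con\mc S_{\al_0}\oplus\cdots\oplus\mc S_{\al_n}$ at once.
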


Fix a countable ordinal number $\al$. We introduce now a property in $X_\al$ that will be used to
characterize the Banach-Saks property for subsets of  $X_\al$.

\begin{defn}
We say that a weakly null sequence $(x_n)_n$ in $X_\al$ is \emph{$<\al$-null} when
$$\text{for every  $\be<\al$   and every $\vep>0$ the set
$\conj{n\in \N}{\nrm{x_n}_{\be}\ge \vep}\text{ is finite}.$}$$
\end{defn}

\begin{prop}\label{iurhtiurt}
Suppose that $(x_n)_n$ is a  bounded sequence in $X_\al$ such that there are $\vep>0$, $\be<\al$ and  a block
sequence $(s_n)_n$ in $\mc S_\be$ such that $\sum_{k\in s_n}|(x_n)_k|\ge \vep$. Then  $\{x_n\}_{n}$ is not
Banach-Saks.
\end{prop}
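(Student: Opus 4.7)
The plan is to exhibit a weakly null subsequence of $(x_n)$ that generates an $\ell_1$-spreading model; by Theorem \ref{char1}(b) this will rule out the Banach--Saks property for $\{x_n\}$. If $\{x_n\}$ fails to be relatively weakly compact, Proposition \ref{BS->rwc} already finishes it; otherwise I would pass to a weakly convergent subsequence with limit $x$. Since $x \in X_\al$ satisfies $\sum_{k \in s}|(x)_k| \to 0$ as $\min s \to \infty$ (because the unit vectors form a Schauder basis) and $(s_n)$ is block, replacing $x_n$ by $x_n - x$ gives a weakly null sequence still satisfying $\sum_{k \in s_n}|(x_n)_k| \ge \vep/2$ for large $n$. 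So I may assume $(x_n)$ itself is weakly null, at the cost of replacing $\vep$ by $\vep/2$.

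The second step is a standard gliding-hump / Bessaga--Pelczynski argument. Since $(x_n)$ is weakly null and $(u_j)_j$ is a Schauder basis of $X_\al$, after passing to a subsequence I can find a block sequence $(z_n)$ with respect to $(u_j)_j$ such that $\sum_n \|x_n - z_n\|_\al < \vep/16$, choosing the block intervals wide enough that $\supp z_n \supseteq s_n$. Using the inclusion $\mc S_\be \con \mc S_\al$ (whence $\|\cdot\|_\be \le \|\cdot\|_\al$), the perturbation yields $\sum_{k \in s_n}|(z_n)_k| \ge \vep/4$ for large $n$, while the $(z_n)$ are pairwise disjointly supported block vectors.

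By Proposition \ref{njkrjggff}(1) I would fix an integer $m$ with $(\mc S_\be \otimes \mc S)\rest [m,\infty) \con \mc S_\al$, and refine once more so that $\min s_n \ge m$ for all $n$. Given $t \in \mc S$ and scalars $(a_k)_{k\in t}$ the candidate test functional is
\[
f_t := \sum_{k \in t}\mathrm{sign}(a_k)\sum_{j \in s_k}\mathrm{sign}((z_k)_j)u_j^*.
\]
Its support $\bigcup_{k\in t}s_k$ is a block union of elements of $\mc S_\be$ whose minima satisfy $|t| \le \min t \le \min s_{\min t}$ and therefore form a Schreier set; hence this union lies in $\mc S_\be \otimes \mc S$ and, by the choice of $m$, in $\mc S_\al$. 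This gives $\|f_t\|_{X_\al^*} \le 1$, and the block-disjointness of $(z_k)$ kills all cross terms, leaving
\[
f_t\Big(\sum_{k' \in t}a_{k'}z_{k'}\Big) = \sum_{k \in t}|a_k|\sum_{j \in s_k}|(z_k)_j| \ge \frac{\vep}{4}\sum_{k\in t}|a_k|.
\]
Thus $(z_n)$ generates an $\ell_1$-spreading model with constant $\vep/4$, and a summable perturbation (using $\max|a_k| \le \sum|a_k|$) transfers the lower bound to $(x_n)$ with constant at least $\vep/8$. Theorem \ref{char1} then yields that $\{x_n\}$ is not Banach--Saks.

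The crux is the cross-term control: without the block structure of $(z_n)$ the functional $f_t$ picks up contributions from $z_{k'}$ for $k' \ne k$, whose total $\mc S_\be$-mass can in principle dominate the diagonal $\vep$. The reductions to weak nullity and then to a block sequence via Bessaga--Pelczynski are precisely what eliminate these cross terms; everything else is an assembly of the Schreier calculus (Proposition \ref{njkrjggff}) and the characterization Theorem \ref{char1}.
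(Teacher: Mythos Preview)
Your proof is correct and takes a genuinely different route from the paper's. The paper does not reduce to a weakly null or block situation; instead it works directly with an arbitrary subsequence $(x_n)_{n\in M}$ and uses a Ramsey colouring of pairs to extract $N\subseteq M$ for which the cross terms $\sum_{k\in s_m}|(x_n)_k|$ (with $m\neq n$ in $N$) are summable with total at most $\vep/2$. This makes the functionals $\sum_{k\in s_n}\theta_k^{(n)} u_k^*$ \emph{almost} biorthogonal to $(x_n)_{n\in N}$, and the $\ell_1$-spreading-model lower bound follows directly for $(x_n)_{n\in N}$ itself. Your approach instead purchases \emph{exact} biorthogonality: you first reduce to the weakly null case via Proposition~\ref{BS->rwc}, and then use a Bessaga--Pelczy\'nski gliding hump to replace the sequence by disjointly supported blocks $(z_n)$ with $s_n\subseteq\supp z_n$, so that the cross terms vanish identically and only a summable perturbation remains. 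Your argument is more elementary in that it avoids the Ramsey step entirely, while the paper's argument isolates a reusable almost-biorthogonality technique and handles the general bounded case in a single pass without the preliminary weak-nullity reduction. One small wording point: the inclusion ``$\mc S_\be\subseteq\mc S_\al$'' you invoke is only valid on a tail $[m,\infty)$, but that is precisely what Proposition~\ref{njkrjggff}(1) provides and what your estimates actually use after the refinement $\min s_n\ge m$.
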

\prue
 Let $K=\sup_{n}\nrm{x_n}$. Let $\bar{n}\in \N$ be such that $(\mc S_\be \otimes \mc S)\rest [\bar n,\infty[\con \mc S_\al$. Fix a subsequence $(x_n)_{n\in
 M}$.
\begin{claim}\label{ijirjirjgr}
For every $\de>0$ there is a subsequence $(x_n)_{n\in N}$ such that for every $n\in N$ one has that
\begin{equation}\label{iorijir}
 \sum_{m\in N, \, m<n} \max\left\{\sum_{k\in s_n} |(x_m)_k|, \sum_{k\in s_m} |(x_n)_k|\right\}\le \de.
\end{equation}
\end{claim}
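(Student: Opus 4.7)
My plan is to build $N=\{n_1<n_2<\cdots\}\subseteq M$ inductively, allocating the error budget $\de$ geometrically across the $j$-th step. Before the induction I pass to a further subsequence of $M$ (still denoted $M$) on which $((x_n)_k)_{n\in M}$ converges for every $k\in \N$, using a Cantor diagonal extraction in the bounded $\ell_\infty$-range of coordinates. In the context of the whole proof of Proposition \ref{iurhtiurt} one may first apply Rosenthal's $\ell_1$-theorem: if $(x_n)_n$ admits an $\ell_1$-equivalent subsequence, then $\{x_n\}_n$ is already not Banach-Saks and the claim is not needed; in the remaining weakly-Cauchy case I absorb the pointwise limit (either by subtraction, when it lies in $X_\al$, or by replacing $(x_n)$ with the consecutive differences $x_{n+1}-x_n$, which stay in $X_\al$ and are coordinate-wise null) so as to reduce to $\lim_{n\in M}(x_n)_k=0$ for every $k$.

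The inductive step is now routine. Suppose $n_1<\cdots<n_j$ have been chosen. I pick $n_{j+1}\in M$ with $n_{j+1}>n_j$ large enough that for every $1\le i\le j$ both
\[
\sum_{k\in s_{n_{j+1}}}|(x_{n_i})_k|<\de\cdot 2^{-(j+1)}
\quad\text{and}\quad
\sum_{k\in s_{n_i}}|(x_{n_{j+1}})_k|<\de\cdot 2^{-(j+1)}.
\]
For the first inequality, approximate $x_{n_i}\in X_\al$ by $y_i\in c_{00}$ with $\|x_{n_i}-y_i\|_\al<\de\cdot 2^{-(j+2)}$, and choose $n_{j+1}$ so that $\min s_{n_{j+1}}>\max\supp y_i$ for each $i\le j$ and $\min s_{n_{j+1}}\ge\bar n$; this is possible because $(s_n)$ is a block sequence, and then $s_{n_{j+1}}\in\mc S_\be\subseteq\mc S_\al$ on $[\bar n,\infty[$, so
\[
\sum_{k\in s_{n_{j+1}}}|(x_{n_i})_k|=\sum_{k\in s_{n_{j+1}}}|(x_{n_i}-y_i)_k|\le\|x_{n_i}-y_i\|_\al<\de\cdot 2^{-(j+2)}.
\]
For the second inequality, $s_{n_i}$ is a fixed finite set, and the coordinate-wise null property yields $\sum_{k\in s_{n_i}}|(x_n)_k|\to 0$ as $n\to\infty$ in $M$, so the bound holds once $n_{j+1}$ is chosen large enough. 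Summing these estimates over the $j$ previously chosen indices gives, for $n=n_{j+1}$,
\[
\sum_{m\in N,\,m<n}\max\Bigl\{\sum_{k\in s_n}|(x_m)_k|,\,\sum_{k\in s_m}|(x_n)_k|\Bigr\}\le j\cdot\de\cdot 2^{-(j+1)}\le\de/2<\de,
\]
which is exactly \eqref{iorijir}.

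The principal obstacle is arranging the coordinate-wise nullity of $(x_n)_{n\in M}$; once this is in place, the construction is a standard sliding-hump diagonalization combined with the fact that $X_\al\hookrightarrow c_0$ via finitely supported approximation in the $\al$-norm. The subtlety lies in the weakly-Cauchy sub-case, where the pointwise limit in $\ell_\infty$ may escape $X_\al$; passage to consecutive differences circumvents this while preserving the ambient Schreier-like space.
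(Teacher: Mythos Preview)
Your handling of the first cross-term $\sum_{k\in s_{n_{j+1}}}|(x_{n_i})_k|$ is essentially the paper's: approximate each earlier $x_{n_i}$ in $c_{00}$ and push $s_{n_{j+1}}$ to the right. The divergence is entirely in the second cross-term $\sum_{k\in s_{n_i}}|(x_{n})_k|$. You need coordinate-wise nullity of $(x_n)_{n\in M}$ for this, and since the hypothesis of the proposition gives only boundedness, you are forced into a preliminary reduction (Rosenthal's dichotomy, then subtracting the limit or passing to consecutive differences). This reduction is outside the scope of the claim and restructures the whole proof of Proposition~\ref{iurhtiurt}; in particular, when you replace $(x_n)$ by $(x_{n+1}-x_n)$ you must still verify that the new sequence admits its own block $(t_n)\subseteq\mc S_\be$ with $\sum_{k\in t_n}|(x_{n+1}-x_n)_k|\ge\vep'$ (it does, after a further thinning, but you do not carry this out), and you must check that failure of Banach--Saks for the difference sequence transfers back to $\{x_n\}$ (this uses that sums of Banach--Saks sets are Banach--Saks, via uniform weak convergence). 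These steps are recoverable but are real work that your sketch elides.

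The paper avoids all of this with a Ramsey argument that uses only the boundedness of $(x_n)$ in $X_\al$. One colors a pair $\{m_0<m_1\}$ according to whether $\sum_{k\in s_{m_0}}|(x_{m_1})_k|\ge\de$ or not, and shows the $0$-color cannot be monochromatic on an infinite set: if it were, pick $t$ of size $m_0>\bar n$ with $m_0\de>K$; then $\bigcup_{m\in t}s_m\in\mc S_\be\otimes\mc S\rest[\bar n,\infty[\subseteq\mc S_\al$, whence $\|x_{m_1}\|_\al\ge m_0\de>K$, a contradiction. This is the key idea you are missing: the $\mc S_\al$-norm itself forces the cross-terms to be eventually small, so no pointwise-limit bookkeeping is needed and the claim holds for the sequence as given. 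Your route can be completed, but it trades one clean Ramsey step for an external reduction that touches Rosenthal's theorem and requires reassembling the hypotheses of the proposition for a modified sequence.
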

The proof of this claim is the following.  Using that $(u_n)_n$ is a Schauder basis of $X_\al$ and that
$(s_n)_n$ is a block, we can find  a
 subsequence $(x_n)_{n\in N}$ such that  for every $n\in N$ one has that
 \begin{equation}
\sum_{m\in N, \, m<n} \sum_{k\in s_n} |(x_m)_k|\le \de.
 \end{equation}
We  color each pair $\{m_0<m_1\}\in [\N]^2$ by
$$c(\{m_0,m_1\})=\left\{\begin{array}{ll}
0 & \text{ if $\sum_{k\in s_{m_0}}|(x_{m_1})_k|\ge \de$}\\
1 & \text{ otherwise}.
\end{array}\right.$$
By the Ramsey Theorem, there is some infinite subset $P\con N$ such that $c$ is constant on $[P]^2$ with
value $i= 0,1$. We claim that $i=1$.  Otherwise, suppose that $i=0$. Let $m_0\in P$, $m_0>\bar n$ be such
that $ m_0 \cdot \de
> K $, and let $m_1\in P$ be such that $t=[m_0,m_1[\cap P$ has cardinality $m_0$. Then $ n_0< m_0\le \min
s_{m_0}$, and hence $s=\bigcup_{m\in t}s_m\in \mc S_\al$. But then,
$$K\ge \nrm{x_{m_1}}\ge \sum_{k\in s}|(x_{m_1})_k|=\sum_{m\in t}\sum_{k\in s_m} |(x_{m_1})_k|\ge \# t \cdot \de >K,$$
a contradiction. Now it is easy to find $P\con N$ such that for every $n\in P$,
 \begin{equation}
\sum_{m\in P, \, m<n} \sum_{k\in s_m} |(x_n)_k|\le \de.
 \end{equation}
Using the Claim \ref{ijirjirjgr}   repeatedly, we can find $N\con M$ such that
$$\sum_{n\in N}\sum_{m\neq n\in N }\sum_{k\in s_m}|(x_n)_k|\le \frac{\vep}2.$$
In other words, $(x_n, \sum_{k\in s_n }\theta_k^{(n)}u_k^*)_{n\in N}$ behaves almost like a biorthogonal
sequence for every sequence of signs $((\theta_k^{(n)})_{k\in s_n})_{n\in N}$. We see now that $(x_n)_{n\in
N}$ generates an $\ell_1$-spreading model with constant $\ge \vep/2$. We assume without loos of generality
that $\bar n<N$. Let $t\in \mc S\rest N$, and let $(a_n)_{n\in t}$ be a sequence of scalars such that
$\sum_{n\in t}|a_n|=1$. Then $s=\bigcup_{n\in t} s_n \in \mc S_\al $, and hence,
\begin{align*}
\nrm{\sum_{n\in t}a_n x_n}\ge &\sum_{k\in s}|(\sum_{n\in t}a_n x_n)_k| =\sum_{n\in t}\sum_{k\in s_n}|(\sum_{m\in t}a_m x_m)_k|\ge\\
\ge &  \sum_{n\in t}|a_n|\sum_{k\in s_n} |(x_n)_k| -
\sum_{n\in t}\sum_{k\in s_n} \sum_{m\in t\setminus \{n\}}|(x_m)_k|\ge \vep  \sum_{n\in t}|a_n|- \frac{\vep}{2}\ge \frac\vep2  \sum_{n\in t}|a_n|.
\end{align*}
\fprue
The following characterizes  the Banach-Saks property of  subsets of $X_\al$.
\begin{prop}\label{odjfiojdijdsfsd}
Let $(x_n)_n$ be a  weakly null sequence in $X_\al$. The following are equivalent:
\begin{enumerate}
\item[(1)] Every subsequence of $(x_n)_n$ has a further subsequence dominated by the unit basis of $c_0$.
\item[(2)] Every subsequence of $(x_n)_n$ has a further norm-null  subsequence  or a subsequence
equivalent to the unit basis of $c_0$.
\item[(3)] $\{x_n\}_n$ is a Banach-Saks set.
\item[(4)] $(x_n)_n$ is $<\al$-null.
\end{enumerate}
\end{prop}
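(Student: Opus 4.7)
The plan is to establish the cycle $(1) \Rightarrow (2) \Rightarrow (3) \Rightarrow (4) \Rightarrow (1)$. For $(1)\Rightarrow(2)$, given a subsequence $(y_n)_n$ of $(x_n)_n$ dominated by the unit basis of $c_0$, either $\|y_n\|\to 0$ yields a norm-null further subsequence, or a lower bound $\|z_n\|\ge c$ holds after passing to a subsequence; weak nullity then produces a basic further subsequence, and the basic projection bound combined with the $c_0$-upper estimate gives $\sup_n|a_n|\le (2K_{\mr{basic}}/c)\|\sum a_n z_n\|$, yielding equivalence to the $c_0$-basis. The step $(2)\Rightarrow(3)$ is immediate since norm-null and $c_0$-equivalent sequences are both Ces\`aro-convergent (the latter via $\|\frac{1}{N}\sum_{n=1}^N z_n\|\le C/N$). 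For $(3)\Rightarrow(4)$ I argue by contrapositive: if $<\al$-nullity fails, there exist $\be<\al$, $\vep>0$ and infinitely many $n$ with $\|x_n\|_\be\ge\vep$, witnessed by sets $s_n\in\mc S_\be$; since $(x_n)_n$ is weakly null, every coordinate sequence vanishes, so truncating initial parts of the $s_n$'s and passing to a further subsequence turns $(s_n)_n$ into a block sequence in $\mc S_\be$ still satisfying $\sum_{k\in s_n}|(x_n)_k|\ge\vep/2$, and Proposition \ref{iurhtiurt} then contradicts $(3)$.

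The core of the argument is $(4)\Rightarrow(1)$, which I would establish by transfinite induction on $\al$. The base case $\al=0$ is classical: $X_0=c_0$ and every bounded weakly-null sequence there has a block subsequence equivalent to the $c_0$-basis. For the inductive step, a standard gliding hump (using that $(u_k)_k$ is a Schauder basis of $X_\al$) together with Lemma \ref{lem-3}(a) lets me replace $(x_n)_n$ by a block sequence $(y_n)_n$. In the successor case $\al=\ga+1$ I would thin the subsequence so that $\|y_n\|_\ga\le 2^{-n}$ and $\min\supp y_n$ grows sufficiently rapidly, and use the decomposition $\mc S_\al=\mc S_\ga\otimes\mc S$ to write any $s\in\mc S_\al$ as $s=\bigcup_{i=0}^m s_i$ with $s_i\in\mc S_\ga$ blocks and $m+1\le\min s_0$; since each $s_i\cap\supp y_n\in\mc S_\ga$, the estimate
\begin{equation*}
\Bigl\|\textstyle\sum_n a_n y_n\Bigr\|_\al \;\le\; \sup_n|a_n|\cdot\sum_i\sum_{n\,:\,\supp y_n\cap s_i\ne\emptyset}\|y_n\|_\ga
\end{equation*}
reduces the problem to bounding the right-hand side uniformly in $s$. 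In the limit case, I would invoke $\mc S_\al=\bigcup_n\mc S_{\be_n^{(\al)}}\rest[n+1,\infty[$ and Proposition \ref{njkrjggff} to arrange $\supp y_n\subset[n+1,\infty[$ with rapid decay of $\|y_n\|_{\be_n^{(\al)}}$, so that every $s\in\mc S_\al$ lives at some level $\be_n^{(\al)}<\al$ and the inductive hypothesis applies up to a summable error.

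The main obstacle is producing a uniform bound on the right-hand side of that estimate in the successor step. Naively, the number of pieces $s_i$ met by $\supp y_n$ can be as large as $m+1$, and $m$ is only constrained by $m+1\le\min s$, so a crude estimate blows up. The resolution is to synchronize the geometric decay of $\|y_n\|_\ga$ with the growth of $\min\supp y_n$: by arranging things so that the first $y_n$ reaching $s_0$ has index at least $\log_2(m+1)$, the geometric tail $\sum 2^{-n}$ contributes a factor of order $1/(m+1)$ that exactly cancels the $m+1$ pieces. This delicate balance between the combinatorics of $\mc S_\al$ and the quantitative thinning of $(y_n)_n$ is where Proposition \ref{njkrjggff} and the block structure play their decisive role.
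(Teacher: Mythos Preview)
Your treatment of $(1)\Rightarrow(2)\Rightarrow(3)$ and of $(3)\Rightarrow(4)$ is correct and matches the paper's. The real divergence is in $(4)\Rightarrow(1)$, where the paper does \emph{not} proceed by transfinite induction on $\al$. Instead it gives a direct, uniform argument: for a fixed $\vep>0$ one chooses $n_0<n_1<\cdots$ and integers $m_0<m_1<\cdots$ so that $\sum_{k>m_i}|(x_{n_i})_k|\le\vep/2^{i+1}$; then every $s\in\mc S_\al$ with $\sum_{k\in s}|(x_{n_i})_k|>\vep/2^{i+1}$ lies in $(\mc S_\al)_{\le m_i}$, which by Proposition \ref{njkrjggff}(2) is contained in some $\mc S_{\al_0^{(i)}}\oplus\cdots\oplus\mc S_{\al_{l_i}^{(i)}}$ with each $\al_j^{(i)}<\al$, and one uses $<\al$-nullity directly to pick $n_{i+1}$ so that $\|x_n\|_{(\mc S_\al)_{\le m_i}}\le\vep/2^{i+1}$ for $n\ge n_{i+1}$. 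The resulting dichotomy (for each $s$ there is at most one ``big'' index $i_0$, and the rest sum geometrically) immediately gives the $c_0$-upper estimate $\|\sum_i a_i x_{n_i}\|_\al\le(\vep+K)\sup_i|a_i|$.

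Your inductive scheme, by contrast, has a genuine gap in the successor step. After thinning so that $\|y_n\|_\ga\le 2^{-n}$ and the supports grow, your estimate reduces to bounding $(m+1)\sum_{n\ge n_0}2^{-n}=(m+1)2^{1-n_0}$, and your resolution asserts that one can ``arrange the first $y_n$ reaching $s_0$ to have index at least $\log_2(m+1)$''. But the only information linking $n_0$ to $m$ is $m+1\le\min s_0\le\max\supp y_{n_0}$, and you have no control over $\max\supp y_n$ from above: the vectors $y_n$ are truncations of a \emph{given} weakly-null sequence, and forcing $\max\supp y_n\le 2^n$ would destroy the approximation. Arranging $\min\supp y_n$ to grow rapidly does not help either, since the relevant inequality points the wrong way. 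Your approach can be repaired---for instance by thinning more aggressively so that $\|y_n\|_\ga\le 2^{-n}/\max\supp y_{n-1}$ and treating the single term $n_0$ separately via $\|y_{n_0}\|_\al\le K$---but this is not what you wrote, and in any case the limit step remains too vague: the inductive hypothesis concerns sequences in $X_\be$, while here the sequence lives in $X_\al$, and you have not explained how to transfer the $c_0$-domination across norms. The paper's direct argument sidesteps both difficulties by never leaving the ambient norm $\|\cdot\|_\al$.
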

\prue
$(1)\Rightarrow (2) \Rightarrow (3)$ trivially.  (3) implies (4):  Suppose otherwise that $(x_n)_n$ is not
$<\al$-null. Fix $\vep>0$ and $\be<\al$ such that
$$M:=\conj{n\in \N}{\nrm{x_n}_{\be}\ge \vep} \text{ is infinite}.$$
For each $n\in M$, let $s_n\in \mc S_{\be}$ such that $\sum_{k\in s_n} |(x_n)_k|\ge \vep.$ Since $(x_n)_{n\in
\N}$ is weakly-null, we can find $N\con \N$ and $t_n\con s_n$ for each $n\in N$ such that $(t_n)_{n\in N}$ is
a block sequence and $\sum_{k\in t_n} |(x_n)_k|\ge \vep/2.$  Then by Proposition \ref{iurhtiurt},
$\{x_n\}_{n\in N}$ is not Banach-Saks, and we are done.

(4) implies (1). Let $K:=\sup_{n\in \N}\nrm{x_n}$. Let $(x_n)_{n\in M}$ be a subsequence of $(x_n)_{n\in
\N}$. If  $\al=0$, Then $X_\al$ is isometric to $c_0$, and so we are done. Let us suppose that $\al>0$.
 Fix $\vep>0$.
\begin{claim}\label{njnjvnfd}
There is $N=\{n_k\}_k\con M$, $n_k<n_{k+1}$,  such that for every $i<j$ and every $s\in \mc S_{\al}$

$$\text{ if $\sum_{k\in s}|(x_{n_{i}})_k|> \vep /2^{i+1}$, then }\sum_{k\in s}|(x_{n_{j}})_k|\le \frac{\vep}{2^{j}}.$$
\qed
\end{claim}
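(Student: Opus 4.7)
The plan is to build the sequence $(n_k)_k$ recursively inside $M$; one takes $n_1$ to be any element of $M$, and the heart of the argument is the inductive step. Assume $n_1 < \cdots < n_j$ in $M$ have been chosen. I will pick $n_{j+1} \in M$, $n_{j+1} > n_j$, so that for every $i \le j$ and every $s \in \mc S_\al$, whenever $\sum_{k \in s} |(x_{n_i})_k| > \vep/2^{i+1}$ one has $\sum_{k \in s} |(x_{n_{j+1}})_k| \le \vep/2^{j+1}$.

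First, I localize the ``bad'' sets. For each $i \le j$, since $x_{n_i}$ lies in $X_\al = \overline{c_{00}}^{\nrm{\cdot}_\al}$, I pick $y_i \in c_{00}$ with $\nrm{x_{n_i}-y_i}_\al < \vep/2^{i+1}$. Then for any $s \in \mc S_\al$ with $\min s > \max \supp y_i$, the definition of $\nrm{\cdot}_\al$ yields
\[ \sum_{k \in s} |(x_{n_i})_k| = \sum_{k \in s} |(x_{n_i} - y_i)_k| \le \nrm{x_{n_i}-y_i}_\al < \frac{\vep}{2^{i+1}}. \]
Hence, setting $L := \max_{i \le j} \max \supp y_i$, every $s \in \mc S_\al$ which is ``bad'' for some $x_{n_i}$ (contributes more than $\vep/2^{i+1}$) must satisfy $\min s \le L$.

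Next, Proposition \ref{njkrjggff}(2) applied to $L$ furnishes ordinals $\al_0,\dots,\al_L < \al$ such that every $s \in \mc S_\al$ with $\min s \le L$ decomposes as a block union $s = s_0 \cup \cdots \cup s_L$ with $s_p \in \mc S_{\al_p}$. Consequently, for any $x \in X_\al$, $\sum_{k \in s}|(x)_k| \le \sum_{p=0}^L \nrm{x}_{\al_p}$. Since $(x_n)_n$ is $<\al$-null and each $\al_p < \al$, the set $\{n \in M : \nrm{x_n}_{\al_p} \ge \vep/(2^{j+1}(L+1))\}$ is finite for every $p \le L$. Pick $n_{j+1} \in M$ larger than $n_j$ and larger than the maximum of these finitely many sets; then $\nrm{x_{n_{j+1}}}_{\al_p} < \vep/(2^{j+1}(L+1))$ for each $p$, so that
\[ \sum_{k \in s}|(x_{n_{j+1}})_k| \le \sum_{p=0}^L \nrm{x_{n_{j+1}}}_{\al_p} \le (L+1)\cdot \frac{\vep}{2^{j+1}(L+1)} = \frac{\vep}{2^{j+1}} \]
for every bad $s$, which closes the induction.

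I expect the only subtle point to be the localization step: one must convert the analytic statement ``$s \in \mc S_\al$ carries nontrivial $\ell_1$-mass of $x_{n_i}$'' into the combinatorial constraint ``$\min s \le L$'' via approximation by finitely supported vectors. Once this translation is available, Proposition \ref{njkrjggff}(2) reduces $(\mc S_\al)_{\le L}$ to boundedly many pieces of strictly smaller Cantor--Bendixson rank, and the $<\al$-nullness of $(x_n)_n$ absorbs each piece individually; summing across the at most $L+1$ blocks closes the estimate.
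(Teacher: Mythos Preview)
Your argument is correct and follows essentially the same route as the paper's: recursively choose $n_{j+1}$ by first localizing the ``bad'' sets $s$ for $x_{n_1},\dots,x_{n_j}$ to $(\mc S_\al)_{\le L}$ via a tail/$c_{00}$ approximation, then invoking Proposition~\ref{njkrjggff}(2) and the $<\al$-nullness of $(x_n)_n$ to make $x_{n_{j+1}}$ small on all such $s$. The only cosmetic differences are that the paper keeps an increasing sequence $(m_i)_i$ of cut-offs and phrases the smallness directly as $\nrm{x_n}_{(\mc S_\al)_{\le m_i}}\le \vep/2^{i+1}$, whereas you re-choose $L$ at each step and sum explicitly over the $L+1$ blocks with weight $\vep/(2^{j+1}(L+1))$; both are the same mechanism.
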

Its proof is the following: Let $n_0=\min M$. Let $m_0\in \N$ be such that
\begin{equation}
\sum_{k>m_0}|(x_{n_0})_k|\le \frac{\vep}{2}.
\end{equation}
In other words,
\begin{equation}
\conj{s\in \mc S_\al}{\sum_{k\in s}|(x_{n_0})_k|> \frac{\vep}2}\con (\mc S_\al)_{\le m_0}.
\end{equation}
 By Proposition \ref{njkrjggff}   (2) there are $\al_0^{(0)},\dots,\al_{l_0}^{(0)}<\al$ such that
\begin{equation}
(\mc S_\al)_{\le m_0}\con   \mc S_{\al_0^{(0)}}\oplus \cdots \oplus \mc S_{\al_{l_0}^{(0)}}.
\end{equation}
We use that $(x_n)_n$ is $<\al$-null to find $n_1\in M$, $n_1>n_0$, be such that for every $n\ge n_1$ one has
that
\begin{equation}
\nrm{x_n}_{(\mc S_\al)_{\le m_0}}\le \frac{\vep}{2 }.
\end{equation}
Let now $m_1>\max\{n_1,m_0\}$ be such that
\begin{equation}
\sum_{k>m_1}|(x_{n_1})_k|\le \frac{\vep}{4}.
\end{equation}
Then there are $\al_0^{(1)},\dots,\al_{l_1}^{(1)}<\al$ such that
\begin{equation}
\conj{s\in \mc S_\al}{\sum_{k\in s}|(x_{n_1})_k|> \frac{\vep}4}\con  (\mc S_\al)_{\le m_1}\con   \mc S_{\al_0^{(1)}}\oplus \cdots \oplus \mc S_{\al_{l_1}^{(1)}}.
\end{equation}
Let now $n_2\in M$, $n_2>n_1$ be such that for every $n\ge n_2$ one has that
\begin{equation}
\nrm{x_n}_{(\mc S_\al)_{\le m_1}} \le \frac{\vep}{4}.
\end{equation}
In general, suppose defined $n_i$, let   $m_i>\max\{n_i,m_{i-1}\}$ be such that
\begin{equation}
\sum_{k>m_i}|(x_{n_i})_k|\le \frac{\vep}{2^{i+1}}.
\end{equation}
Then,
\begin{equation}
\conj{s\in \mc S_\al}{\sum_{k\in s}|(x_{n_i})_k|> \frac{\vep}{2^{i+1}}}\con (\mc S_\al)_{\le m_i}\con   \mc S_{\al_0^{(i)}}\oplus \cdots \oplus \mc S_{\al_{l_i}^{(i)}},
\end{equation}
for some $\al_0^{(i)},\dots,\al_{l_i}^{(i)}<\al$. Let $n_{i+1}\in M$, $n_{i+1}>n_i$ be such that for all
$n\ge n_{i+1}$ one has that
\begin{equation}\label{jkreirjng}
 \nrm{x_n}_{(\mc S_\al)_{\le m_i}}\le \frac{\vep}{2^{i+1}}.
\end{equation}
We have therefore accomplish the properties we wanted for $N$.

Now fix $N$ as in Claim \ref{njnjvnfd}. Then $(x_n)_{n\in N}$ is dominated by the unit basis of $c_0$. To see
this, fix a finite sequence of scalars $(a_i)_{i\in t}$, and $s\in \mc S_\al$. If $\sum_{k\in
s}|(x_{n_i})_k|\le \vep/2^{i+1} $ for every $i\in t$, then,
\begin{align*}
\sum_{k\in s} |(\sum_{i\in t}a_i x_{n_i})_k|\le \max_{i\in t} |a_i| \cdot \sum_{k\in s}\sum_{i\in t}|(x_{n_i})_k|\le   \max_{i\in t} |a_i| \sum_{i\in t} \frac\vep{2^{i+1}}\le \vep \max_{i\in t}|a_i|.
\end{align*}
Otherwise, let $i_0$ be the first $i\in t$ such that $\sum_{k\in s}|(x_{n_i})_k|> \vep/2^{i+1}$. It follows
from the claim that
\begin{equation}
\sum_{k\in s}|(x_{n_{j}})_k|\le \frac\vep{2^{j}} \text{ for every $i_0<j$}
\end{equation}
Hence,
\begin{align*}
\sum_{k\in s} |(\sum_{i\in t}a_i x_{n_i})_k| \le & \sum_{k\in s}|(\sum_{i\in t, \, i<i_0} a_i x_{n_i})_k|+ |a_{i_0}|\cdot \sum_{k\in s} |(x_{n_{i_0}})|_k + \sum_{k\in s}|(\sum_{i>i_0} a_i x_{n_i})_k|\le \\
\le & \max_{i\in t}|a_i|\sum_{i<i_0}\frac{\vep}{2^{i+1}}+ |a_{i_0}|\nrm{x_{n_{i_0}}} + \max_{i\in t}|a_i| \sum_{i>i_0} \frac{\vep}{2^i} \le
(\vep +K)\max_i |a_i| .
\end{align*}
\fprue

\prue[Proof of Theorem \ref{ioo34iji4jtr}]
Suppose that $A$ is Banach-Saks, and suppose that $(x_n)_n$ is a sequence in $\conv(A)$ without
Ces\`{a}ro-convergent subsequences. Since $\conv(A)$, is relatively weakly-compact,  we may assume that $x_n
\to_n x\in X_\al$ weakly. Let $y_n:=x_n-x$ for each $n\in \N$. Then $(y_n)_n$ is a weakly-null sequence
without
  Ces\`{a}ro-convergent subsequences.   Hence, by Proposition \ref{odjfiojdijdsfsd}, there is some $\vep>0$ and some  $\be>0$ such that
$$M=\conj{n\in \N}{\nrm{y_n}_\be\ge \vep}\text{ is infinite}.$$
For each $n\in M$, let $s_n\in \mc S_\be$ such that
$$\sum_{k\in s_n}|(y_n)_k|\ge \vep.$$
For each $n\in M$, write as convex combination, $x_n=\sum_{a\in F_n}\la_a \cdot a$, where $F_n\con A$ is
finite. Since $(y_n)_n$ is weakly-null, it follows that by going to a subsequence if needed that we may
assume that $(s_n)_n$ is a block sequence.  Let   $n_0$  be such that for all $n\ge n_0$ one has that
$\sum_{k\in s_n}|(x)_k|\le \vep/2$. Hence for every $n\ge n_0$ one has that
\begin{align*}
\vep \le & \sum_{k\in s_n} |(y_n)_k| \le \sum_{k\in s_n}|(x)_k| +\sum_{k\in s_n}\sum_{a\in F_n} \la_a |(a)_k| =  \sum_{k\in s_n}|(x)_k| +\sum_{a\in F_n}\la_a\sum_{k\in s_n}  |(a)_k|\le \\
\le & \sum_{k\in s_n}|(x)_k| + \max_{a\in F_n} \sum_{k\in s_n}  |(a)_k| \le \frac{\vep}2 +  \max_{a\in F_n} \sum_{k\in s_n}|(a)_k|.
\end{align*}
So for each $n\ge n_0$ we can find $a_n\in F_n$ such that $\sum_{k\in s_n}|(a_n)_k|\ge \vep/2$.  Then, by
Proposition \ref{iurhtiurt},  $(a_n)_n$  is not Banach-Saks.
\fprue

\begin{conje} Let $\mc F$ be a compact, hereditary and spreading family on $\N$. Then
the convex hull of any Banach-Saks subset $A\con X_\mc F$   is again Banach-Saks.
\end{conje}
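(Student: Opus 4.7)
The natural strategy is to reduce the conjecture to Theorem \ref{ioo34iji4jtr} via the classification of compact, hereditary, spreading families mentioned in the paragraph preceding Theorem \ref{ioo34iji4jtr}: for such $\mc F$ there exist a countable ordinal $\al$, an $n\in\N$, and an infinite $M\con \N$ with
\[
\mc S_\al \otimes [M]^{\le n}\;\con\; \mc F[M]\;\con\; \mc S_\al \otimes [M]^{\le n+1}.
\]
Since $\mc F$ is spreading, the canonical order-preserving bijection $M\leftrightarrow \N$ transports this trace-level inclusion to a global norm comparison on $c_{00}(M)$. Because the $\mc S_\al\otimes [\N]^{\le k}$-norm is uniformly equivalent to the $\mc S_\al$-norm (with constant $k$), it follows that the subspace $Y:=\overline{\mathrm{span}}\{u_m:m\in M\}$ of $X_\mc F$ is isomorphic to $X_\al$, with isomorphism constants depending only on $n$. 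This is the bridge along which the entire argument travels.

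Assume for contradiction that $A\con X_\mc F$ is Banach-Saks while $\conv(A)$ is not. By Theorem \ref{char1} together with Lemma \ref{lem-3}(b), one can extract a weakly-null block sequence $(y_n)_n$ (with respect to the unit basis of $X_\mc F$) lying in $\conv(A)-x$ for some $x\in X_\mc F$, generating an $\ell_1$-spreading model with some constant $\de>0$. Writing $y_n=\sum_{a\in F_n}\la_a^{(n)}a-x$, the $\ell_1$-spreading estimate yields witnesses $s_n\in \mc F$ and signs $(\theta_k^{(n)})_{k\in s_n}$ with $\sum_{k\in s_n}\theta_k^{(n)}(y_n)_k\ge \de$. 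Passing to a subsequence and using the spreading property of $\mc F$ to slide the witnesses, we may arrange that $(s_n)_n$ is a block sequence entirely contained in $M$.

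Once the witnesses sit in $M$, the proof of Theorem \ref{ioo34iji4jtr} can be replayed inside $Y\iso X_\al$. Indeed, since the $X_\mc F$-norm restricted to $Y$ is equivalent to an $X_\al$-norm, the sequence $(y_n)_n$ still generates an $\ell_1$-spreading model when viewed in $X_\al$. Following the reasoning in the proof of Theorem \ref{ioo34iji4jtr}, one selects, for each large $n$, an element $a_n\in F_n$ such that $\sum_{k\in s_n}|(a_n)_k|\ge \de/2$; Proposition \ref{iurhtiurt} (now applied in $X_\al$) then implies that $(a_n)_n$ is a sequence in $A$ without a Ces\`aro-convergent subsequence, contradicting the Banach-Saks property of $A$.

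The main obstacle is the last step: ensuring that the relevant coordinates of the $y_n$ (and of the $a$'s in $F_n$) really lie in $M$, so that the passage to $Y\iso X_\al$ is legitimate. The projection onto coordinates in $M$ is not a priori bounded on $X_\mc F$ (the norm outside $M$ may dominate), and so a straightforward truncation argument fails. A careful implementation will likely combine two ingredients: the spreading property of $\mc F$, which allows ``sliding'' the block sequences into $M$ without changing the norm, and a diagonal extraction ensuring that, for each $n$, the norm-contributing portion of $y_n$ (and of the selected $a_n$) is already concentrated in $M$. An alternative route is to argue via Theorem \ref{erioeiofjioedf}(c), showing that the combinatorial configuration $(I_n,\mu_n,\mc G_\de^{\bar\mu}(\mc F))$ witnessing a counterexample cannot occur when $\mc F$ is spreading, since spreading forces $\mc F$ to be quantitatively comparable to $\mc S_\al$ and the condition already fails in that case by Theorem \ref{ioo34iji4jtr}.
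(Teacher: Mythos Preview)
This statement is recorded in the paper as a \emph{conjecture}; the paper provides no proof, so there is nothing to compare your proposal against. What you have written is a plan with an acknowledged gap, and that gap is genuine.

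The classification you invoke controls only the trace $\mc F[M]$: it tells you that $Y=\overline{\mathrm{span}}\{u_m:m\in M\}$ is isomorphic to $X_\al$, but says nothing about vectors whose support meets $\N\setminus M$. The elements of $A$ and of $\conv(A)$ are of this general kind, and there is no bounded projection $X_\mc F\to Y$ available. The ``sliding'' you propose is an operation on \emph{sets} (spreading $\mc F$ lets you push an $s\in\mc F$ rightward into $M$), not on vectors: if you spread the support of some $a\in A$ into $M$, the resulting vector is no longer in $A$, and the sequence you extract need not witness anything about $A$. Even granting that $(y_n)_n$ could somehow be viewed inside $X_\al$, your final step appeals to Proposition \ref{iurhtiurt}, which requires the witnesses $s_n$ to lie in $\mc S_\be$ for some $\be<\al$; in the proof of Theorem \ref{ioo34iji4jtr} this drop in complexity is produced by Proposition \ref{odjfiojdijdsfsd} (the $<\al$-null characterization), and there is no analogue of that characterization for a general spreading $\mc F$. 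Your alternative route through Theorem \ref{erioeiofjioedf}(c) is a restatement rather than a reduction: showing that the combinatorial configuration of (c.2) cannot occur for a spreading $\mc F$ is precisely the content of the conjecture.
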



%
%
%
%
%
%
%
\section{A Banach-Saks set whose convex hull is not Banach-Saks}\label{counterexample}
The purpose of this section is to present an example of a Banach-Saks set whose convex hull is not. To do
this, using our characterization in Theorem \ref{erioeiofjioedf}, it suffices to find a special pre-compact
family $\mc F$ as in (c) of that proposition. The requirement of $\mc F$ being hereditary is not essential
here because $X_{\mc F}=X_{\widehat{\mc F}}$.

We introduce now some notions of special interest. In what follows, $I=\bigcup_{n\in \N}I_n$ is a partition
of $I$ into finite pieces $I_n$. A \emph{transversal} (relative to $(I_n)_n$) is an infinite subset $T$ of
$I$ such that $\#(T\cap I_n)\le 1$ for all $n$. By reformulating naturally Theorem \ref{classif1} we obtain
the following.

 \begin{lemma} \label{lem-4} Let $T\con I$ be a transversal and $n\in \N$.
    \begin{enumerate}
    \item[(a)] If ${\mc F}$ is not $n$-large in $T$, then there exist a  transversal $T_0 \subseteq T$ and
    $m\leq n$ such that $\mc F[T_0]=[T_0]^{\leq m}$.
    \item[(b)] If $\mc F$ is not large in $T$ then there is some transversal $T_0\con T$ and $n\in \N$ such
    that $\mc F[T_0]=[T_0]^{\le n}$.
    \item[(c)]If  $\mc F$ is $n$-large in $T$, then there exists a transversal $T_0 \subseteq
    T$ such that $[T_0]^{\le n} \subseteq {\mc F}[T_0]$.
     \end{enumerate}
    \end{lemma}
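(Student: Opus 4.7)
The statement is a combinatorial reformulation of Theorem~\ref{classif1}, so my approach is to prove each part directly by infinite Ramsey theory applied to the family itself (rather than routing through indicator functions in $c_0$). Since replacing $\mc F$ by its hereditary closure $\widehat{\mc F}$ preserves (non-)$n$-largeness in $T$---if $s'\con s\in\mc F$ then $|s'\cap K|\le |s\cap K|$---and only enlarges the traces to match the desired form, I would first reduce to the case where $\mc F$ is hereditary. The transversal structure plays no role beyond ensuring that $T$ is infinite, so everything happens inside the Ramsey space $[T]^{<\omega}$.

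For (a), the hypothesis gives an infinite $K\con T$ with $\mc F[K]\con[K]^{\le n-1}$, i.e.\ $|s\cap K|\le n-1$ for every $s\in\mc F$. I would then apply Ramsey's theorem $n$ successive times---once for each $r\in\{0,1,\dots,n-1\}$, coloring $[K]^r$ by membership in $\mc F$---to extract a single infinite $T_0\con K$ such that every $[T_0]^r$ is monochromatic. Set $R:=\{r\le n-1: [T_0]^r\con\mc F\}$. Heredity of $\mc F$ forces $R$ to be downward closed in $\{0,1,\dots,n-1\}$ (any $r'$-subset of $T_0$ with $r'<r\in R$ extends to an $r$-subset of $T_0$, which lies in $\mc F$, whence so does its $r'$-subset), so $R=\{0,1,\dots,m\}$ for some $m\le n-1$. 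The inclusion $[T_0]^{\le m}\con\mc F[T_0]$ is then immediate: each $t\in[T_0]^{\le m}$ is itself in $\mc F$ and equals $t\cap T_0$. For the reverse, if some $s\in\mc F$ had $|s\cap T_0|\ge m+1$, heredity would place an $(m+1)$-subset of $s\cap T_0$ inside $\mc F$, contradicting $m+1\notin R$ and the monochromaticity of $[T_0]^{m+1}$.

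Part (b) is a one-line corollary of (a): $\mc F$ not large in $T$ means $\mc F$ is not $n$-large in $T$ for some $n\in\N$, and (a) supplies the required $T_0$ and $m\le n$. For (c) I would apply Ramsey once on $[T]^n$ with the same coloring $t\mapsto \mathbbm{1}(t\in\mc F)$, obtaining infinite $T_0\con T$ on which $[T_0]^n$ is monochromatic. The color-$0$ alternative is impossible, for otherwise heredity would force $|s\cap T_0|<n$ for every $s\in\mc F$ (any $n$-subset of $s\cap T_0$ would belong to $\mc F$), contradicting the $n$-largeness of $\mc F$ in $T_0\con T$. Hence $[T_0]^n\con\mc F$, and heredity delivers $[T_0]^{\le n}\con\mc F\con\mc F[T_0]$.

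The main obstacle is securing the \emph{equality} $\mc F[T_0]=[T_0]^{\le m}$ in (a) rather than just the inclusion $\mc F[T_0]\con[T_0]^{\le n-1}$: this is exactly the point at which the heredity of $\mc F$ must be invoked alongside the Ramsey monochromaticity of every layer $[T_0]^r$, and it is what separates the fine "bounded" picture of (a)--(b) from the purely existential dichotomy offered by (c).
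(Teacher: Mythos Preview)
The paper gives no detailed proof here: it simply says that the lemma is obtained ``by reformulating naturally Theorem~\ref{classif1}'', meaning one applies that theorem to the set of indicators $\mc K=\{\mathbbm 1_s:s\in\mc F\}\con c_0$, for which $\supp_\vep(\mc K[M])=\mc F[M]$ whenever $0<\vep\le 1$. The point of that route is that part~(a) of Theorem~\ref{classif1} already produces an $M$ with $\mc F[M]$ \emph{hereditary}, after which (b.1)/(b.2) there translates directly into (a)--(c) of the lemma. Your direct finite-Ramsey argument is more elementary and works perfectly once $\mc F$ is hereditary; the layer-by-layer Ramsey in (a) and the single-layer Ramsey in (c) are exactly the natural proofs in that case.

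The step that needs more care is the reduction to hereditary $\mc F$. Passing to $\widehat{\mc F}$ preserves $n$-largeness, but from $\widehat{\mc F}[T_0]=[T_0]^{\le m}$ you only recover $\mc F[T_0]\con[T_0]^{\le m}$ together with $[T_0]^m\con\mc F[T_0]$ (for $t\in[T_0]^m$ there is $s\in\mc F$ with $t\con s$, and $|s\cap T_0|\le m$ forces $s\cap T_0=t$); the inclusion $[T_0]^{<m}\con\mc F[T_0]$ does \emph{not} follow---take $\mc F=[\N]^m$ with $T_0=\N$. In (c) the situation is worse, since there is no upper bound on $|s\cap T_0|$, so even $[T_0]^n\con\mc F[T_0]$ is not automatic from $[T_0]^n\con\widehat{\mc F}$. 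Both gaps are repairable by one further diagonalisation (thin $T_0$ to $T_1$ so that each witness $s_t\in\mc F$ meets $T_1$ only in the prescribed set $t$), but this is genuine extra work that the phrase ``only enlarges the traces to match the desired form'' skips over. Alternatively, bypass the reduction by first quoting the heredity of the trace from Theorem~\ref{classif1}(a), which is what the paper implicitly does.
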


\begin{defn} \label{def-4}   For every $0<\lambda<1$ and $s \in{\mc F}$ let us define
\begin{enumerate}
    \item[(a)] $s[\lambda]:=\{n\in \mathbb{N}: \#(s\cap I_n) \geq \lambda \# I_n \}$,
    \item[(b)] $s[+]:=\{n\in \mathbb{N}: s\cap I_n \neq\emptyset \}$,
\end{enumerate}
and the families of finite sets of $\mathbb{N}$
\begin{enumerate}
    \item[(c)] $\mc G_\lambda({\mc F}) := \{ s[\lambda] : s\in{\mc F}\}$,
    \item[(d)] $\mc G_+({\mc F}) := \{ s[+] : s\in{\mc F}\}$.
\end{enumerate}
\end{defn}

\begin{prop}\label{prop-5} Suppose that $\mc F$ is a $T$-family on $I$.  For every $0<\lambda<1$ and every sequence of scalars $(a_n)_n$, we have that
\begin{equation}\label{nvvnbkvv}
{\lambda}\Big\|\sum_{n }a_n u_n\Big\|_{\mc G_\la(\mc F)}\le
\max\left\{\Big\|\sum_{n}a_n \Big(\frac{1}{\#I_n}\sum_{j\in I_n}
u_j\Big)\Big\|_{\mc F}, \sup_n |a_n|\right\}\le \Big\|\sum_{n}a_n u_n\Big\|_{\mc G_+(\mc F)}.
\end{equation}
\end{prop}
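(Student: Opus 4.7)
The proof will be a direct computation unpacking the definition of the Schreier-like norm on both sides. Writing $x := \sum_n a_n u_n$ (viewed in the $\N$-indexed basis) and $y := \sum_n a_n (\#I_n)^{-1}\sum_{j \in I_n} u_j$ (viewed in the $I$-indexed basis), the central identity to exploit is that for any $s \in \mc F$,
\begin{equation*}
\sum_{j \in s} |(y)_j| = \sum_n \frac{\#(s \cap I_n)}{\#I_n}\,|a_n|,
\end{equation*}
which links the ``averaged'' element $y$ to sums over the coarser index set $\N$ cut out by $s[\lambda]$ or $s[+]$.

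For the right inequality, I will show separately that $\|y\|_{\mc F} \leq \|x\|_{\mc G_+(\mc F)}$ and $\sup_n |a_n| \leq \|x\|_{\mc G_+(\mc F)}$. The second holds because $\|x\|_{\mc G_+(\mc F)} \geq \|x\|_\infty = \sup_n |a_n|$ by definition of the Schreier-like norm. For the first, note $\|y\|_\infty = \sup_n |a_n|/\#I_n \leq \sup_n|a_n|$, and for each $s \in \mc F$ the ratio $\#(s \cap I_n)/\#I_n \leq 1$ is supported on $s[+]$, so the displayed identity gives $\sum_{j \in s}|(y)_j| \leq \sum_{n \in s[+]} |a_n| \leq \|x\|_{\mc G_+(\mc F)}$.

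For the left inequality, I need to bound $\lambda \|x\|_{\mc G_\lambda(\mc F)}$ above by $\max\{\|y\|_{\mc F}, \sup_n|a_n|\}$. Again there are two pieces in $\|x\|_{\mc G_\lambda(\mc F)}$. The $\ell_\infty$-piece $\sup_n |a_n|$ is already dominated (even without the factor $\lambda$) by the right-hand side. For the Schreier piece, fix $s \in \mc F$ and restrict the sum in the displayed identity to indices $n \in s[\lambda]$, on which $\#(s \cap I_n)/\#I_n \geq \lambda$; this yields
\begin{equation*}
\|y\|_{\mc F} \geq \sum_{j \in s} |(y)_j| \geq \lambda \sum_{n \in s[\lambda]} |a_n|,
\end{equation*}
and taking the supremum over $s \in \mc F$ produces $\|y\|_{\mc F} \geq \lambda \sup_{s \in \mc F}\sum_{n \in s[\lambda]}|a_n|$, which combined with the $\ell_\infty$-piece gives the desired bound.

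No step is expected to pose a genuine obstacle: the whole statement is essentially a transparent ``transfer'' of the Schreier-like norm through the averaging map $u_n \mapsto (\#I_n)^{-1}\sum_{j \in I_n} u_j$, with $\lambda$ measuring the loss incurred when one passes from set-based supports ($s[+]$) to $\lambda$-density supports ($s[\lambda]$). The one thing to keep straight is that $\mc F$ lives on $I$ while $\mc G_\lambda(\mc F)$ and $\mc G_+(\mc F)$ live on $\N$, so the two norms in the inequality are in genuinely different Schreier-like spaces; being careful about which basis $u_n$ refers to on each side is the main bookkeeping task.
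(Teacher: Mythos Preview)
Your proposal is correct and follows essentially the same approach as the paper's own proof: both arguments hinge on the identity $\sum_{j\in s}|(y)_j|=\sum_n \frac{\#(s\cap I_n)}{\#I_n}|a_n|$ and then bound the two pieces of each Schreier-like norm in exactly the way you describe. If anything, you are slightly more explicit than the paper in handling the $\ell_\infty$-piece of $\|x\|_{\mc G_\lambda(\mc F)}$ for the left inequality, which the paper leaves implicit.
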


\begin{proof} For each $n$, set
$$
x_n:=\frac{1}{\# I_n}\sum_{j\in I_n} u_j.
$$
Given $(a_n)_n$, by Definition \ref{def-2}, for every $s\in \mc F$, we have that
\begin{eqnarray*}
 \sum_{k\in s} \Big|\Big(\sum_n a_n x_n\Big)_k\Big|  &=& \sum_{n\in s[+]}\sum_{k\in s\cap I_n} \frac{|a_n|}{\#I_n}=\sum_{n\in s[+]}|a_n|\frac{\#(s\cap I_n)}{\#I_n}\\
   &\leq& \sum_{n\in s[+]}|a_n| \leq\Big\|\sum_{n}a_n u_n\Big\|_{\mc G_+(\mc F)},
\end{eqnarray*}
and
\begin{align*}
\sup_k \left|\left(\sum_n a_n x_n \right)_k \right|\le \sup_n \frac{|a_n|}{\#I_n} \le \sup_n|a_n\le \nrm{\sum_n a_nu_n}_{\mc G_+(\mc F)}.
\end{align*}
This proves the second inequality in \eqref{nvvnbkvv}.
 Now, given $t\in \mc G_\lambda(\mc F)$, let $s\in \mc F$ be such that $s[\lambda]=t \subseteq s[+]$.  Then

$$
\sum_{k\in s}\Big|\Big(\sum_{n}a_n x_n\Big)_k\Big| \ge \sum_{n\in s[\lambda]}\sum_{k\in s\cap I_n}|a_n|\frac{1}{\#I_n}=\sum_{n\in s[\lambda]}|a_n|\frac{\#(s\cap
I_n)}{\#I_n}\ge \lambda \sum_{n\in t}|a_n|.
$$
This proves the first inequality in \eqref{nvvnbkvv}.
\end{proof}
Observe that the use of the $\sup$-norm of $(a_n)_n$ in the middle term of \eqref{nvvnbkvv} can be explained
by the fact that the sequence of averages  $(x_n)_n$ is not always seminormalized, independently of the
family $\mc F$. However, for the families we will consider $(x_n)_n$ will be normalized and 1-dominating the
unit basis of $c_0$, so the term $\sup_n |a_n|$ will disappear in \eqref{nvvnbkvv}.

\begin{defn}\label{ij4tijrigrf}
 A pre-compact family $\mc F$ on $I$ is called a  \emph{$T$-family}   when there is a partition $(I_n)_n$  of $I$ into finite pieces $I_n$ such that
\begin{enumerate}
\item[(a)] $\mc F$ is not large in any $J\con I$.
\item[(b)]  There is $0<\la\le 1$ such that $\mc G_{\la}(\mc F)$ is large in $\N$.
\end{enumerate}
\end{defn}
Observe that the pre-compactness of $\mc F$ follows from (a) above.
\begin{prop} \label{prop-6} Let ${\mc F}$ be a $T$-family on   $I=\bigcup_n I_n$. Then
\begin{enumerate}
    \item[(a)]  the block sequence of averages
$\left({1}/{\#I_n}\sum_{i\in I_n} u_i\right)_n$ is not Banach-Saks in $X_{{\mc F}}$.

    \item[(b)] Every subsequence $(u_i)_{i\in T}$ of $(u_i)_{i\in I}$ has a further subsequence $(u_i)_{i\in
    T_0}$ equivalent to the unit basis of $c_0$. Moreover its equivalence constant is at most the integer
    $n$ such that $\mc F[T_0]=[T_0]^{\le n}$.
\end{enumerate}
\end{prop}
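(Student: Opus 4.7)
The plan is to exploit the two defining properties of a $T$-family separately: part (a) will combine the largeness of $\mc G_\la(\mc F)$ with the Banach-Saks characterization in Theorem \ref{char1}, while part (b) will use that $\mc F$ is not large in any infinite subset of $I$, together with Lemma \ref{lem-4}(b), to produce a $c_0$-equivalent subsequence.

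For part (a), I would first note that pre-compactness of $\mc F$ guarantees that the basis $(u_i)_{i\in I}$ is weakly-null in $X_\mc F$, hence so is the bounded block sequence $x_n := (1/\#I_n)\sum_{i\in I_n}u_i$. By Theorem \ref{char1}(c), to show that $\{x_n\}_n$ fails the Banach-Saks property it suffices to exhibit $\vep>0$ for which $\mc F_\vep((x_n)_n)$ is large. I take $\vep=\la$; for each $s\in \mc G_\la(\mc F)$ I pick a witness $s'\in \mc F$ with $s'[\la]=s$ and consider the functional $x^*_{s'}:=\sum_{j\in s'}u_j^*$, which lies in $B_{X_\mc F^*}$ directly from the definition of the $\mc F$-norm. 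The key computation
$$x^*_{s'}(x_n) \;=\; \frac{\#(s'\cap I_n)}{\#I_n} \;\ge\; \la \quad \text{for every } n\in s$$
shows that $s$ is contained in the element $\{n:|x^*_{s'}(x_n)|\ge \la\}$ of $\mc F_\la((x_n)_n)$, so the largeness of $\mc G_\la(\mc F)$ propagates to that of $\mc F_\la((x_n)_n)$, finishing (a).

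For part (b), let $T\con I$ be an arbitrary transversal. Since $\mc F$ is not large in any infinite subset of $I$, in particular it is not large in $T$, so Lemma \ref{lem-4}(b) produces a further transversal $T_0\con T$ and an integer $n$ with $\mc F[T_0]=[T_0]^{\le n}$. For any $x=\sum_{i\in T_0}a_i u_i$, the definition of the $\mc F$-norm combined with $\#(s\cap T_0)\le n$ for every $s\in\mc F$ gives
$$\|x\|_\infty \;\le\; \|x\|_\mc F \;=\; \max\Bigl\{\|x\|_\infty,\; \sup_{s\in \mc F}\sum_{i\in s\cap T_0}|a_i|\Bigr\} \;\le\; n\,\|x\|_\infty,$$
which is exactly the claimed $n$-equivalence between $(u_i)_{i\in T_0}$ and the unit basis of $c_0$.

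The main conceptual obstacle is spotting the right pairing in part (a): once one recognizes that the flat functional $\sum_{j\in s'}u_j^*$ evaluated on the average $x_n$ recovers precisely the threshold ratio $\#(s'\cap I_n)/\#I_n$ that defines $\mc G_\la(\mc F)$, the two sides fit perfectly and largeness transfers with essentially no further work. Part (b) and the verification that $x^*_{s'}\in B_{X_\mc F^*}$ are routine bookkeeping with the $\mc F$-norm.
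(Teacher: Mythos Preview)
Your proof is correct. Part (b) is essentially identical to the paper's argument. For part (a) you take a slightly different route: instead of invoking Proposition \ref{prop-5} to show that $(x_n)_n$ dominates the unit basis of $X_{\mc G_\la(\mc F)}$ and then quoting that the basis of a Schreier-like space over a large family is never Banach-Saks (which is what the paper does), you feed the computation $\langle\sum_{j\in s'}u_j^*,\,x_n\rangle=\#(s'\cap I_n)/\#I_n$ directly into the largeness criterion of Theorem \ref{char1}(c). Both arguments rest on the same identity; the paper packages it as a domination inequality between two Schreier-like spaces, while your version is more self-contained and bypasses Proposition \ref{prop-5} at the cost of re-deriving its special case inline. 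Your approach is in fact the one the paper itself uses in the proof of (c)$\Rightarrow$(b) of Theorem \ref{erioeiofjioedf}. One minor omission: in part (b) you start with ``let $T\con I$ be an arbitrary transversal'', but the proposition allows any infinite $T\con I$; the reduction (pass first to an infinite transversal inside $T$, possible since each $I_n$ is finite) is trivial and the paper also dismisses it in one line, but it deserves a sentence.
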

\begin{proof} Set $x_n:= {1}/{\#I_n}\sum_{i\in I_n} u_i$ for each $n\in \N$.
(a): From Theorem \ref{classif1} there is $M\con \N$ such that $[M]^1\con \mc G_\la(\mc F)[M]$. This readily
implies that $\nrm{x_n}_\mc F\ge \la$ for every $m\in M$. Therefore, $(x_n)_{n\in M}$ is a seminormalized
block subsequence of the unit basis $(u_n)_{n}$, and it follows that $(x_n)_{n\in M}$ dominates the unit basis of
$c_0$. From the left inequality in \eqref{nvvnbkvv} in Proposition \ref{prop-5} we have that
$(x_n)_{n\in M}$ also dominates the subsequence $(u_n)_{n\in M}$ of the unit basis of $X_{\mc G_\la(\mc F)}$.
Since $\mc G_\la(\mc F)$ is large, no subsequence of its unit basis is Banach-Sack and therefore $(x_n)_{n}$
is not Banach-Saks.

(b)  Let  $(u_i)_{i\in T}$ be a subsequence of the unit basis of $X_{\mc F}$. Without loss of generality, we
assume that $T$ is a transversal of $I$. Using our hypothesis (a),   the Lemma \ref{lem-4} (b) gives us
another transversal $T_0 \subset T$ and $n\in \N$ such that ${\mc F}[T_0]=[T_0]^{\leq n}$. Then the
subsequence $(u_i)_{i\in T_0}$ is equivalent to the unit basis of $c_0$ and therefore  Ces{\`a}ro convergent
to 0. In fact, for every $s\in {\mc F}$ and for every scalar sequence $(a_j)_{j\in T_0}$
$$
\sum_{i\in s}|a_i| = \sum_{i\in s \cap T_0}|a_i|\leq \max\{ |a_i| : i\in s\cap T_0\} \#(s\cap T_0) \leq n \|(a_i)\|_{\infty}.
$$
On the other hand it is clear that $\|(a_i)\|_{\infty}\leq \|\sum_{i\in T_0}a_ie_i\|_{{\mc F}}$.
\end{proof}

This is the main result.
\begin{thm} \label{ioo4ui4}There is a $T$-family on $\N$. More precisely, for every $0<\vep<1$ there is a partition $\bigcup_n I_n$ of $\N$ in
finite pieces $I_n$ and a pre-compact  family $\mc F$ on $\N$ such that
\begin{enumerate}
\item[(a)] $\mc F$ is not 4-large in any $M\con \N$.
\item[(b)]  $\mc G_{1-\vep}(\mc F)=\mc G_+(\mc F)=\mk S$, the Schreier barrier.
\item[(c)] For every $s\in \mc G_+(\mc F)$ one has that $s\cap I_n=I_n$, where $n$ is the minimal $m$ such that $s\cap I_m\neq
\buit$.
\end{enumerate}
\end{thm}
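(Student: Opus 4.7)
The plan is to construct the partition $(I_n)_n$ and the family $\mc F$ explicitly, adapting the Erd\H{o}s--Hajnal counterexample to the double-indexed analogue of Gillis' lemma that was advertised in the introduction. Fix $\vep \in (0,1)$ and $N \ge \lceil 1/\vep \rceil$. I would let each $I_m$ be a finite set of rapidly growing cardinality carrying a product structure; for definiteness, identify $I_m$ with the functions $[m-1] \to [N]$, so that $|I_m| = N^{m-1}$ (the actual construction may require a richer encoding, but this is the spirit). For every pair $n_0 < m$, designate a cylinder $B_{n_0,m} \subseteq I_m$ depending only on the $n_0$-th coordinate of the product and having relative density $\ge 1-\vep$. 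Then for every Schreier-barrier element $\bar n = \{n_0 < n_1 < \cdots < n_{n_0-1}\} \in \mk S$, set
\[
t_{\bar n} := I_{n_0} \cup \bigcup_{i=1}^{n_0-1} B_{n_0,n_i}, \qquad \mc F := \{t_{\bar n} : \bar n \in \mk S\}.
\]
Conditions (b) and (c) are then immediate: $t_{\bar n}[+] = \bar n \in \mk S$; the inclusion $I_{n_0} \subseteq t_{\bar n}$ is (c); and each non-minimal slice has density $\ge 1-\vep$, so $t_{\bar n}[1-\vep] = t_{\bar n}[+]$.

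The entire content of the theorem is (a). Given an arbitrary infinite $M \subseteq \N$, since $M$ must meet infinitely many pieces $I_n$, I would first pass to a transversal $M' = \{k_m : m \in I(M')\}$ of $M$, with $k_m \in M \cap I_m$ and $I(M') \subseteq \N$ infinite. A direct unfolding of the definitions of $t_{\bar n}$ and of the Schreier barrier $\mk S$ shows that it suffices to find an infinite $I(K) \subseteq I(M')$ with the dispersion property
\[
\#\bigl\{m \in I(K), \, m > n_0 \,:\, k_m \in B_{n_0,m}\bigr\} \le 2 \qquad \text{for every } n_0 \in I(K).
\]
Indeed, if $K := \{k_m : m \in I(K)\}$, then for any $\bar n \in \mk S$ with minimum $n_0$ one has $|t_{\bar n} \cap K| \le 1 + \min(n_0 - 1, 2) \le 3$, the ``$1$'' coming from $K \cap I_{n_0}$ and the second term from the fact that Schreier sets with minimum $n_0$ can prescribe any $(n_0-1)$-subset of $(n_0,\infty)$, so the worst case is bounded by the dispersion count.

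The combinatorial heart -- and the main obstacle -- is producing such $I(K)$ for an \emph{arbitrary} selector sequence $(k_m)_m$, and this is precisely the Erd\H{o}s--Hajnal phenomenon for double-indexed events. I would build $I(K) = \{m_1 < m_2 < \cdots\}$ by diagonal recursion: at stage $j$, use that $B_{n_0,m}$ is a cylinder on the $n_0$-th coordinate of $I_m$ and that $I_{m_j}$ has far more coordinates than there are ``old'' constraints to satisfy, so one can choose the next index $m_j \in I(M')$ large enough that the already-fixed selector $k_{m_j}$ fails to lie in $B_{m_i,m_j}$ for all but at most two previously chosen $m_i$'s. The coordinate-wise decoupling of the $B_{n_0,m}$'s across different $n_0$ is exactly what makes this possible, and what sharply distinguishes the two-dimensional setting from the one-dimensional case governed by Gillis' Lemma \ref{gillis}. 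Finally, pre-compactness of $\mc F$ follows either from (a) via Lemma \ref{lem-4}(b) or, more directly, from the observation that any pointwise limit of $t_{\bar n}$'s in $2^\N$ is a finite union of an $I_{n_0}$ with finitely many $B_{n_0,m}$'s, completing the proof.
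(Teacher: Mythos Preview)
Your proposal has a genuine gap in part (a), and it stems from over-simplifying the slices. You set $t_{\bar n}\cap I_{n_i}=B_{n_0,n_i}$, a set that depends only on the pair $(n_0,n_i)$. This is precisely what makes the ``dispersion property'' unachievable. Since each $B_{n_0,m}$ is a cylinder on a different coordinate of $I_m$, the intersection $\bigcap_{n_0<m}B_{n_0,m}$ is nonempty for every $m$; an adversary may therefore fix $k_m\in\bigcap_{n_0<m}B_{n_0,m}$. For the resulting transversal, given any $m_1<m_2<m_3<m_4$ with $m_1\ge 4$, pick $\bar n\in\mk S$ with $\min\bar n=m_1$ and $\{m_2,m_3,m_4\}\subseteq\bar n$: then $k_{m_j}\in t_{\bar n}$ for $j=1,\dots,4$, so $\mc F$ is $4$-large (indeed large) on this transversal. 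No thinning $I(K)$ of the index set helps, because the membership $k_m\in B_{n_0,m}$ already holds for \emph{every} pair; your recursive step ``choose $m_j$ large enough that $k_{m_j}$ fails to lie in $B_{m_i,m_j}$ for all but two previous $m_i$'' is simply impossible against this adversary. (There is also a smaller slip: your dispersion bound is stated only for $n_0\in I(K)$, but $t_{\bar n}$ with $\min\bar n\notin I(K)$ must be controlled as well.)

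What the Erd\H{o}s--Hajnal construction really provides, and what the paper exploits, is a \emph{doubly} indexed family of events. For $u=\{n_1<\cdots<n_{n_1}\}\in\mk S$ the slice $F(u)\cap I_{n_k}$ is taken to be an intersection $\bigcap_{1<i<j<k}(\pi_{i,j})^{-1}(A_{n_i,n_j})$ that encodes \emph{all} intermediate pairs $(n_i,n_j)$, not merely $(n_1,n_k)$. The essential feature of Lemma~\ref{lem-5} is that $\bigcap_{\{i,j\}\in[w]^2}A_{i,j}=\emptyset$ once $\#w$ exceeds the number of colours; consequently (Lemma~\ref{lem-6}) if many $u$'s share the same $n_1$ and the same late element $\max w$ but run through enough distinct intermediate values, then $I_{\max w}\cap\bigcap_u F(u)=\emptyset$. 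A Ramsey colouring of triples then shows, inside any transversal, that $\mc F$ cannot be $4$-large there. This cancellation over the intermediate positions is the heart of the matter and has no analogue in a construction whose slices depend only on the minimum and the current index.
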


\begin{cor}
For every $\vep>0$ there is a Schreier-like space $X_{\mc F}$ such that every subsequence of the unit basis
of it has a further subsequence 4-equivalent to the unit basis of $c_0$, yet there is a block sequence of
averages $((1/\#I_n)\sum_{i\in I_n}u_i)_n$  which is  $1+\vep$-equivalent to the unit basis of the Schreier
space $X_\mc S$.
\end{cor}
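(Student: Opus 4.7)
The plan is to apply Theorem \ref{ioo4ui4} with a suitably small parameter $\vep' \in (0,1)$ (to be fixed at the end in terms of $\vep$) to produce a partition $(I_n)_n$ of $\N$ into finite pieces and a pre-compact family $\mc F$ on $\N$ satisfying (a), (b), (c). I claim that the corresponding Schreier-like space $X_{\mc F}$ is the desired example, with the averages $x_n := (1/\#I_n)\sum_{i \in I_n} u_i$ playing the role of the block sequence.

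For the statement about subsequences of the unit basis, I would essentially recycle the proof of Proposition \ref{prop-6}(b). After passing to a transversal $T$ of $(I_n)_n$, Lemma \ref{lem-4}(a) combined with property (a) of Theorem \ref{ioo4ui4} (the family $\mc F$ is not $4$-large) yields a further transversal $T_0 \con T$ together with an integer $m \le 4$ such that $\mc F[T_0] = [T_0]^{\le m}$. Then, as in the computation at the end of Proposition \ref{prop-6}(b), for every finitely supported scalar sequence $(a_i)_{i \in T_0}$ and every $s \in \mc F$ one has $\sum_{i \in s \cap T_0}|a_i| \le m \|(a_i)\|_\infty \le 4\|(a_i)\|_\infty$, while $\|(a_i)\|_\infty \le \|\sum_i a_i u_i\|_{\mc F}$ is immediate. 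This delivers the claimed $4$-equivalence with the unit basis of $c_0$.

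For the block sequence $(x_n)_n$ I would invoke Proposition \ref{prop-5} with $\la = 1-\vep'$. Property (b) of Theorem \ref{ioo4ui4} identifies $\mc G_{1-\vep'}(\mc F) = \mc G_+(\mc F)$ with the Schreier barrier $\mk S$, and $X_{\mc F} = X_{\widehat{\mc F}}$ gives $X_{\mk S} = X_{\mc S}$. Proposition \ref{prop-5} then reads
\begin{equation*}
(1-\vep')\Big\|\sum_n a_n u_n\Big\|_{X_{\mc S}} \le \max\Big\{\Big\|\sum_n a_n x_n\Big\|_{\mc F},\, \sup_n |a_n|\Big\} \le \Big\|\sum_n a_n u_n\Big\|_{X_{\mc S}}.
\end{equation*}

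The only nontrivial point in the plan, and where I expect the real content of property (c) to enter, is the collapse of the $\max$ in the middle term to just $\|\sum_n a_n x_n\|_{\mc F}$. For this I would note that by property (c), for each $n$ there exists $s \in \mc F$ whose smallest nonempty trace is $s \cap I_n = I_n$; evaluating the Schreier-like norm on such an $s$ gives $\|\sum_m a_m x_m\|_{\mc F} \ge |a_n|$, and taking the supremum over $n$ absorbs $\sup_n |a_n|$ into $\|\sum_n a_n x_n\|_{\mc F}$ (and as a byproduct yields $\|x_n\|_{\mc F}=1$, so $(x_n)_n$ is normalized). Thus $(x_n)_n$ is $(1-\vep')^{-1}$-equivalent to the unit basis of $X_{\mc S}$. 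Finally, setting $\vep' := \vep/(1+\vep)$ gives $(1-\vep')^{-1} = 1+\vep$, producing the advertised $(1+\vep)$-equivalence and completing the proof.
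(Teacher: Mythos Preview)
Your proof is correct and follows essentially the same approach as the paper: both invoke Proposition \ref{prop-5} together with property (b) of Theorem \ref{ioo4ui4}, and both identify property (c) as the mechanism for absorbing the $\sup_n|a_n|$ term into $\|\sum_n a_n x_n\|_{\mc F}$. The paper's own proof is slightly terser---it picks $u\in\mk S$ with $u\cap t=\{m\}$ so that the testing set $s$ meets only $I_m$ among the relevant blocks, whereas your version (any $s$ with $\min s[+]=n$) also works since all the discarded terms in $\sum_{m\in s[+]}|a_m|\frac{\#(s\cap I_m)}{\#I_m}$ are nonnegative; your explicit choice of $\vep'=\vep/(1+\vep)$ and the appeal to Lemma \ref{lem-4}(a) for the sharp $4$-equivalence (rather than the generic Proposition \ref{prop-6}(b)) are welcome clarifications that the paper leaves implicit.
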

\begin{proof}
From Proposition \ref{prop-5}, it only rests to see that $\nrm{\sum_n a_n x_n}_\mc F\ge \sup_n |a_n|$, where
$x_n=1/\#I_n \sum_{i\in I_n}u_i$ for every $n\in \N$.  To see this, fix a finite sequence of scalars
$(a_n)_{n\in t}$, and fix $m\in t$. Let  $u\in \mk S$ be such that $\min u=m$ and $u\cap t=\{m\}$, and let
$s\in \mc F$ such that $s[+]=u$. Then, by the properties of $\mc F$, it follows that $s\cap I_m=I_m$, while
$s\cap I_n=\buit$ for $n\in t\setminus\{m\}$.  Consequently,
\begin{align*}
\nrm{\sum_{n\in t}a_n x_n}_\mc F \ge & \sum_{k\in s}\left|\left(\sum_{n\in t}a_n \frac{1}{\#I_n}\sum_{i\in I_n} u_i  \right)_k\right|=|a_m|.
\end{align*}
\end{proof}

The construction of our family as in Theorem \ref{ioo4ui4} is strongly influenced by the following
counterexample of Erd\H{o}s and Hajnal \cite{ErHa} to the natural generalization of Gillis' Lemma
\ref{gillis} to double-indexed sequences of large measurable sets.

\begin{lemma} \label{lem-5}
For every $m\in \N$ and $\vep>0$ there is probability space $(\Om,\Sig,\mu)$ and a sequence $(A_{i,j})_{1\le
i<j\le n}$ with $\mu(A_{i,j})\ge \vep$ for every $1\le i<j\le n$ such that for every $s\con \{1,\dots,n\}$ of
cardinality $m$ one has that
$$\bigcap_{\{i,j\}\in [s]^2}A_{i,j}=\buit.$$
\end{lemma}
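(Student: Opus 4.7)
My plan is to construct $(\Omega, \Sigma, \mu)$ and the sets $A_{i,j}$ explicitly, using a Tur\'an-type coloring argument. Fix $m$ and $\vep$, and pick an integer $r$ with $(r-1)/r \ge \vep$, i.e.\ $r \ge \lceil 1/(1-\vep) \rceil$. For an integer $n \ge m$ (take $n=m$ if the minimum is desired) set $\Omega = \{1, \dots, r\}^n$ with the uniform product probability measure $\mu$, and define
\[
A_{i,j} := \{\omega \in \Omega : \omega(i) \neq \omega(j)\}, \qquad 1 \le i < j \le n.
\]
By independence of the coordinates, $\mu(A_{i,j}) = (r-1)/r \ge \vep$ uniformly in the pair, which gives the required lower bound on the measures.

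For the emptiness condition, fix $s \subset \{1,\dots,n\}$ with $\#s = m$. An element $\omega$ belongs to $\bigcap_{\{i,j\} \in [s]^2} A_{i,j}$ precisely when $\omega(i) \neq \omega(j)$ for every $\{i,j\} \in [s]^2$, that is, when $\omega|_s$ is an injection into $\{1,\dots,r\}$. Provided $m > r$ this is impossible by the pigeonhole principle, so the intersection is empty. The two requirements $r \ge \lceil 1/(1-\vep)\rceil$ and $r < m$ are simultaneously satisfiable in the regime $\vep < (m-2)/(m-1)$, which is the natural one in which a uniform-measure coloring construction works directly.

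The main obstacle is that this Tur\'an-style bound is essentially sharp for large $n$: for every $\omega$ the ``conflict graph'' $G_\omega := \{\{i,j\} : \omega \notin A_{i,j}\}$ must have independence number $\le m-1$, which by Tur\'an's theorem forces $|E(G_\omega)| \gtrsim \binom{n}{2}/(m-1)$, and averaging yields $\vep \le (m-2)/(m-1) + o(1)$. To push $\vep$ closer to $1$ for fixed small $m$, the plan is to specialize to $n = m$, so that only one $m$-subset has to be handled; one then partitions $\Omega$ into $\binom{m}{2}$ disjoint pieces $\{P_{i,j}\}_{\{i,j\} \in [m]^2}$ and sets $A_{i,j} := \Omega \setminus P_{i,j}$, obtaining $\mu(A_{i,j}) = 1 - 1/\binom{m}{2}$ together with $\bigcap_{\{i,j\}} A_{i,j} = \Omega \setminus \bigcup P_{i,j} = \emptyset$ automatically. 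The subtle point, and the one that embodies the failure of the double-indexed analogue of Gillis' lemma, is to reconcile the density requirement $\mu(A_{i,j}) \ge \vep$ with the combinatorial emptiness constraint across all $m$-subsets simultaneously; the coloring construction above does exactly this within the Tur\'an regime, which is all that the proof of Theorem \ref{ioo4ui4} will require.
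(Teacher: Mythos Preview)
Your core construction---the uniform counting measure on $\{1,\dots,r\}^n$ with $A_{i,j}$ the set of tuples whose $i^{\mathrm{th}}$ and $j^{\mathrm{th}}$ coordinates differ---is exactly the paper's construction, and your computation $\mu(A_{i,j})=1-1/r$ together with the pigeonhole argument for emptiness when $\#s>r$ matches the paper line for line.

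You are also right to flag that the lemma as literally stated (``for every $m\in\N$ and $\vep>0$'') overstates what this construction yields: one needs $1/(1-\vep)\le r\le m-1$, hence $\vep\le (m-2)/(m-1)$, and for $m=2$ no such system can exist at all. The paper simply does not address this, because the only use of the lemma downstream is through the explicit sets $A_{i,j}^{(n,r)}$ in the proof of Theorem~\ref{ioo4ui4}, where the parameters are always chosen inside this range. So your caveat is accurate, and your remark that ``the Tur\'an regime is all that the proof of Theorem~\ref{ioo4ui4} will require'' is exactly the point. The extra material on the Tur\'an obstruction and the $n=m$ partition trick is correct and shows you see the combinatorial boundary of the problem, but it is not needed here; you may safely drop that paragraph and present just the coloring construction, noting the parameter constraint.
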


\begin{proof}
Given $n,r\in \N$ let $\Om:=\{1,\dots,r\}^n$, and let $\mu$ be the probability counting measure on $r^n$.
Given $1\le i<j\le n$ we define the subset of $n$-tuples
\begin{equation}
\label{oi4oi3u4iu5t4} A_{i,j}^{(n,r)}:=\{(a_l)_{l=1}^{n}\in \{1,\dots,r\}^n \, : \,   a_{i}\neq a_{j}\}.
\end{equation}
 This is the desired
counterexample. In fact,
\begin{enumerate}
\item[(a)] $\# A_{i,j}^{(n,r)}= r^n(1-1/r)$ for every $1\le i<j\le n$, and
\item[(b)] $\bigcap_{\{i,j\}\in [s]^2}A^{(n,r)}_{i,j} =\emptyset $ for every  $s\in [\{1,\dots,n\}]^{r+1}$.
\end{enumerate}
To see (a), given  $1\le i< j \leq n$

$$ \{1,2,\ldots,r\}^n \setminus A_{i,j}^{(n,r)}= \bigcup_{\theta =1}^r \{(a_l)_{l=1}^{n} \in \{1,2,\ldots,r\}^k  \, : \, a_{i}= a_{j}=\theta \}
$$
being the last union disjoint. Since
$$\# \{(a_l)_{l=1}^n\in \{1,2,\ldots,r\}^n : a_{i}=a_{j}=\theta \}=r^{n-2},
$$
it follows that  $\# A_{i,j}^{(n,r)}= r^n(1-1/r)$.   It is easy to see (b) holds since otherwise we would
have found a subset of $\{1,\dots,r\}$ of cardinality $r+1$.
\end{proof}

\begin{proof}[Proof of Theorem \ref{ioo4ui4}]  For practical reasons
we will define such family not in $\N$ but in a more appropriate countable set $I$.  Fix $0< \lambda <1 $.  We define first the disjoint sequence $(I_n)_n$.
 For each $m \in \mathbb{N}, m
\geq 4$, let $r_m$ be  such that
\begin{equation} \label{equac-1}
    \left(1-\frac1{r_m}\right)^{\binom{m-2}{2}} \geq \lambda .
\end{equation}
Let $4\leq m\leq n$ be fixed.  Let
$$
I_{m,n}:= \{1,\dots,r_m\}^{n \times [\{2,\ldots,m-1\}]^2}.
$$
 Let $I_n=\{n\}$ for $n=1,2,3$. For $n\geq 4$ let
$$I_n:=\prod_{4\le m \le n}I_{m,n}=\prod_{4\le m \le n}\{1,\dots,r_m\}^{n \times [\{2,\ldots,m-1\}]^2}.$$
Observe that for $n\neq n'$ one has that $I_n \cap I_{n'}=
    \emptyset$.  Let $I:=\bigcup_n I_n$.     Now, given $4\leq m_0 \leq n$ and  $2\le i_0<j_0\le m_0-1$, let
$$ \pi_{i_0,j_0}^{(n,m_0)} : I_n\rightarrow \{1,2,\ldots,r_{m_0}\}^n
$$
be the natural projection,
$$ \pi_{i_0,j_0}^{(n,m_0)}(\,\big((b_{i,j}^{(l,m)})_{}\big)_{4\le m\le n,\,1\le l\le n ,\, 2\le i<j\le m-1  } \,):=(b^{(l,m_0)}_{i_0,j_0})_{l=1}^n\in  \{1,2,\ldots,r_{m_0}\}^n.$$
We start with the definition of the family ${\mc F}$ on $I$.  Recall that $\mk S:=\conj{s\con \N}{\#s=\min
s}$ is the Schreier barrier. We  define $F:\mk S\to [I]^{<\infty}$ such that $F(u)\con \bigcup_{n\in u}I_n$
and then we will define $\mc F$ as the image of $F$.  Fix $u=\{n_1 <\cdots< n_{n_1}\}\in {\mk S}$:
\begin{enumerate}
\item[(i)] For $u=\{1\}$, let $F(u):= I_1$.
\item[(ii)] For $u:=\{2,n\}$, $2<n$, let $F(u):= I_2\cup I_n$.
\item[(iii)] For $u:=\{3,n_1,n_2\}$, $3<n_1<n_2$, let $F(u):= I_3\cup I_{n_1}\cup I_{n_2}$.
\item[(iv)] For $u=\{n_1,\dots,n_{n_1}\}$ with $3<n_1<n_2<\cdots< n_{n_1}$,  then let
$$F(u)\cap I_{n_{k}}:= I_{n_{k}}  \text{ for } k=1,2,3,$$
\end{enumerate}
and  for $3<k\leq n_1$, let
\begin{equation} \label{equac-2} F(u)\cap I_{n_k}: =
\bigcap_{1<i<j<k}\big(\pi_{i,j}^{(n_k,n_1)}\big)^{-1}\big(A^{(n_k,r_{n_1})}_{n_i ,n_j} \big)
    \end{equation}
Where  the $A$'s are as in \eqref{oi4oi3u4iu5t4}.     Explicitly,
$$ F(u)\cap I_{n_k}= \conj{\big((b_{i,j}^{(l,m)})_{}\big)_{4\le m\le n_k,\,1\le l\le n_k ,\, 2\le i<j\le m-1 } \in I_{n_k} }
{ b_{i,j}^{(n_i,n_1)}\neq b_{i,j}^{(n_j,n_1)} \text{, $1<i<j<k$}
}.
$$
Observe that it follows from \eqref{equac-2} that
\begin{equation}
\pi_{i,j}^{(n_k,n_1)}\big(F(u)\cap I_{n_k} \big) = A_{
n_i ,n_j}^{(n_k,r_{n_1})} \subset \{1,2,\ldots,r_{n_1}\}^{n_k}
\end{equation}
for every $1<i<j<k$.

From the definition of ${\mc F}$ it follows that $u=F(u)[+]$ for every $u \in {\mk S}$. Now, we claim that
given  $u \in {\mk S}$, we have that $u=F(u)[\lambda]$, or, in other words, $\#(F(u)\cap I_n)\ge \la \#I_n$
for every $n\in u$. The only non-trivial case is when $u=\{n_1<\cdots <n_{n_1}\}$ with $n_1>3$, and $n=n_{k}$
is such that $3<k\leq n_1$. It follows from the equality in \eqref{equac-2}, (a) in the proof of Lemma
\ref{lem-5}, and the choice of $r_{n_1}$ in \eqref{equac-1} that

$$
\frac{\#(F(u)\cap I_{n_{k}})}{\#(I_{n_k})} =\prod_{1<i<j<k}
\frac{\#(A^{(n_k, r_{n_1})}_{ n_{i},n_{j}})}{(r_{n_1})^{n_{k}}}=
\prod_{1<i<j<k} \left(1-\frac{1}{r_{n_1}}\right) \ge
\left(1-\frac{1}{r_{n_1}}\right)^{\binom{n_1-2}{2}}\ge \lambda
$$
Summarizing, ${\mc G}({\mc F},\lambda)= {\mc G}({\mc F},+)= {\mk S}$. Thus, ${\mc F}$ satisfies the property
(b)  in Theorem \ref{ioo4ui4}. For the property (a)  we use the following fact.

 \begin{lemma} \label{lem-6}Suppose that $\mc A \subseteq \mk S$ is a subset such that
\begin{enumerate}
\item[(a)] $\min u=\min v= n_1 > 3$ for all $u,v\in \mc A$.
\item[(b)] there are $1<i<j< n_1$  and a set $w\subset \mathbb{N}$ such that
\begin{enumerate}
\item[(b.1)] $\#w \ge  r_{n_1}+2$ and $ n_1<\min w$.
\item[(b.2)] For every $l_1<l_2<\max w$ in $w$ there is $u\in \mc A$
such that $\{n_1,l_1,l_2,\max w\}\subset u$, $\#(u\cap \{1,2,\ldots,l_1\})=i$ and $\#(u\cap
\{1,2,\ldots,l_2\})=j$.
\end{enumerate}
\end{enumerate}
Then
$$
I_{\max w}\cap \bigcap_{u\in  \mc A} F(u)=\emptyset.$$

 \end{lemma}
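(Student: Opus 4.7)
The plan is to argue by contradiction via a pigeonhole on the projections of a supposed common element onto the single coordinate slot indexed by the pair $(i,j)$ at level $m = n_1$. The key point is that hypothesis (b.2), together with the very definition (iv) of $F(u)\cap I_{n_k}$, will force the value of this slot at $l_1$ to differ from its value at $l_2$ for every pair $l_1 < l_2$ inside $w \setminus \{\max w\}$; since (b.1) guarantees this set has more than $r_{n_1}$ elements, the injectivity so produced into $\{1,\dots,r_{n_1}\}$ collapses by pigeonhole.

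Concretely, suppose for contradiction that $x \in I_{\bar n} \cap \bigcap_{u \in \mc A} F(u)$ with $\bar n := \max w$, and write
$x = ((b^{(l,m)}_{i',j'}))_{4 \le m \le \bar n,\,1 \le l \le \bar n,\,2 \le i' < j' \le m-1},$ $b^{(l,m)}_{i',j'} \in \{1,\dots,r_m\}$. Fix $l_1 < l_2$ in $w \setminus \{\bar n\}$. By (b.2), pick $u = \{n_1 < n_2 < \cdots < n_{n_1}\} \in \mc A$ containing $\{n_1,l_1,l_2,\bar n\}$ with $l_1 = n_i$ and $l_2 = n_j$. Since $\bar n > l_2$ and $\bar n \in u$, we may write $\bar n = n_k$ for some $k > j$; using $1 < i < j$ this yields $k \ge j+1 \ge 4$, and $\# u = n_1$ gives $k \le n_1$, so $3 < k \le n_1$ and rule (iv) applies to $F(u) \cap I_{n_k} = F(u) \cap I_{\bar n}$. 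The specific pair $(i',j') = (i,j)$ is admissible inside the conjunction appearing in \eqref{equac-2}, since $1 < i < j < k$ and $2 \le i < j \le n_1 - 1$. Reading the constraint $b^{(n_{i'},n_1)}_{i',j'} \neq b^{(n_{j'},n_1)}_{i',j'}$ at this slot gives $b^{(l_1,n_1)}_{i,j} \neq b^{(l_2,n_1)}_{i,j}$.

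Since $l_1 < l_2$ was arbitrary in $w \setminus \{\bar n\}$, the map $l \mapsto b^{(l,n_1)}_{i,j}$ from $w \setminus \{\bar n\}$ into $\{1,\dots,r_{n_1}\}$ is injective; but (b.1) gives $\#(w \setminus \{\bar n\}) \ge r_{n_1}+1 > r_{n_1}$, the desired contradiction, so $I_{\bar n} \cap \bigcap_{u \in \mc A} F(u) = \emptyset$. The only step that requires genuine attention, which I would double-check first, is the admissibility verification: ensuring both that the relevant $k$ (the position of $\bar n$ in $u$) lies in the range $3 < k \le n_1$ where formula (iv) actually applies, and that the fixed pair $(i,j)$ lies in the index set $\{(i',j') : 1 < i' < j' < k\}$ where the incompatibility constraint is imposed. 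Everything else reduces to the bare pigeonhole count.
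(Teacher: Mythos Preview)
Your proof is correct and is essentially the same argument as the paper's, just unpacked: where the paper applies the projection $\pi_{i,j}^{(\max w,n_1)}$ and invokes the empty intersection $\bigcap_{\{l_1,l_2\}\in[w\setminus\{\max w\}]^2} A_{l_1,l_2}^{(\max w,r_{n_1})}=\emptyset$ from Lemma~\ref{lem-5}(b), you work directly with the coordinate $b^{(l,n_1)}_{i,j}$ and phrase the same obstruction as injectivity of $l\mapsto b^{(l,n_1)}_{i,j}$ on a set of size $\ge r_{n_1}+1$. Your admissibility checks (that $3<k\le n_1$ and $1<i<j<k$) are exactly what is needed and are handled correctly.
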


 \begin{proof}[Proof of Lemma \ref{lem-6}]  Observe that for $l\in u$,
 $\#(u\cap\{1,2,\ldots,l\})=i$ just means that $l$ is the
 $i^{\text{th}}-$element of $u$. For every couple $\{l_1 <l_2\}\in [w\setminus \{\max w\}]^2$, take $u_{l_1 ,l_2}\in {\mc A}$ satisfying the
condition of (b.2). Since, $u_{l_1,l_2}=\{n_1<\cdots<n_i=l_1<\cdots<n_j=l_2<\cdots<\max w<\cdots\leq
n_{n_1}\}$, it follows from the equality in \eqref{equac-2} that
$$
\pi_{i,j}^{(\max w,n_1)}\big(F(u_{l_1 ,l_2})\cap I_{\max w} \big) =  A_{l_1,l_2}^{(\max w,r_{n_1})}.
$$
Hence
\begin{align*}
 \pi_{i,j}^{(\max w,n_1)}(I_{\max w} \cap \bigcap_{u\in {\mc A}}F(u)) \subseteq &
\bigcap_{\{l_1,l_2\}
\in [w\setminus \{\max w\}]^2}  \pi_{i,j}^{(\max w,n_1)}( I_{\max w} \cap F(u_{l_1,l_2}))=  \\
=& \bigcap_{\{l_1,l_2\}
\in [w\setminus \{\max w\}]^2} A_{l_1,l_2}^{(\max w,r_{n_1})}
= \emptyset
\end{align*}
where the last equality follows from (b) in the proof of  Lemma \ref{lem-5}, since $\#w\geq r_{n_1} +2$.

\end{proof}

We continue with the proof property (a) of $\mc F$ in Theorem \ref{ioo4ui4}.  Suppose otherwise that there
exists a transversal $T$ of $I$ such that ${\mc F}$ is $4$-large in $T$. By Lemma
  \ref{lem-4} (c), there exists $T_0 \subseteq T$ such that $[T_0]^4 \subseteq {\mc
  F}[T_0]$. For every $k \in T_0$, $n(k)$ denotes the unique integer $m$ for which $k\in I_{m}$. It is easy to see that if
$k_1,k_2 \in T_0$ with $k_1 <k_2$, then $n(k_1)<n(k_2)$. Now, for each $t=\{k_0<k_1<k_2<k_3\}$ in $[T_0]^4$,
let us choose $U(t)\in \mk S$ such that
$${t}\subset F(U({t})).$$
Observe that $\{n(k_0),n(k_1),n(k_2), n(k_3)\}\subset U({t})$, and hence $\#U({t})\le n(k_0)$. Now, let
$$\text{$\bar
k :=\min  T_0$ and $\bar n:=n(\bar k)$.}$$
Define the coloring $\Theta:[T_0\setminus \{\bar k\} ]^3\to [\{1,2,\ldots,n(\bar k)\}]^3$ for each
$t=\{k_1<k_2<k_3\}$ in $T_0\setminus \{\bar k\}$ as
$$
\Theta(t)=\big(\#(U(\{\bar k\}\cup t)\cap \{1,\ldots, n(k_1)\}),\#(U(\{\bar k\}\cup t)\cap \{1,\ldots,n(k_2)\}), \min U(\{\bar k\}\cup t)\big).
$$
By the Ramsey theorem, there exist $1<i<j<n_1\leq n(\theta)$ and $T_1 \subseteq T_0 \setminus \{\theta\}$
such that $\Theta$ is constant on $T_1$ with value $\{i,j,n_1\}$. Choose $k_1< \dots <k_{r_{\bar n}+2}$ in
$T_1$, and  set
$$\mc A:=\{  U(\{\bar k,k_{l_1},k_{l_2},k_{r_{\bar n}+2}\}) : 1\le l_1 <l_2< r_{\bar n}+2 \}.
$$
Notice that $\mc A$ fulfills the hypothesis of Lemma \ref{lem-6}
with respect to the set $w= \{n(t_{l_1}): 1\le l_1 \le
r_{n(\theta)}+2\} $, and therefore
\begin{equation}
I_{n(t_{r_{n(\theta)}+2})}\cap \bigcap_{u\in \mc A}
F(u)=\emptyset,
\end{equation}
which contradicts the fact that
$$k_{r_{\bar n}+2} \in I_{n(k_{r_{\bar n}+2})}\cap \bigcap_{u\in \mc A} F(u).$$
The family $\mc F$ clearly has property (c) from the statement of Theorem \ref{ioo4ui4} by construction. This
finishes the proof of the desired properties of $\mc F$.
\end{proof}

A similar analysis will be used now to prove that the closed linear span of the sequence  $$x_n=\frac{1}{\#
I_n}\sum_{j\in I_n}u_j$$ is not a complemented subspace of $X_\mc F$.  Let $(x_n^*)_n$ denote the sequence of
biorthogonal functionals to $(x_n)_n$ on $[x_n]^*$.

\begin{prop}
If $T:[u_k]_k\rightarrow [x_n]_n$ is a linear mapping such that
$$
\lim_{k\rightarrow\infty}\langle x_{n(k)}^*,Tu_k\rangle\neq0,
$$
then $T$ cannot be bounded. In particular, there does not exist a projection $P:X_\mc F\rightarrow [x_n]_n$.
\end{prop}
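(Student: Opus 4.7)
I would argue by contradiction: suppose $T$ is bounded, say $\|T\|=M<\infty$. The hypothesis lets us pass to a subsequence and fix $\de>0$ such that $|\langle x^*_{n(k)},Tu_k\rangle|\ge \de$ for every $k$. The strategy is to build finite sums $z_m=\sum_{i=1}^m u_{k_i}$ whose $X_\mc F$-norm stays uniformly bounded, while $\|Tz_m\|_\mc F$ grows at a rate proportional to $n(k_1)$, which can be chosen arbitrarily large at the start of the extraction; this forces $\|T\|=\infty$.

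The first step is a diagonal extraction. Since $\mc F$ is pre-compact, $(u_k)_k$ is weakly null in $X_\mc F$ and consequently $(Tu_k)_k$ is weakly null in $[x_n]_n$. Expand $Tu_k=\sum_n a^{(k)}_n x_n$ in the Schauder basis $(x_n)$ of $[x_n]_n$. Two kinds of smallness are at hand: weak nullness gives $a^{(k)}_{n}\to 0$ as $k\to\infty$ for each fixed $n$, while the norm convergence of each expansion gives $a^{(k)}_n\to 0$ as $n\to\infty$ for each fixed $k$. A routine interleaved diagonal procedure therefore yields a subsequence (still denoted $(k_i)$) with $|a^{(i)}_{n(k_i)}|\ge \de$ and $\sum_{j\neq i}|a^{(i)}_{n(k_j)}|\le \de/2$ for every $i$. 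After a further extraction, Theorem \ref{ioo4ui4}(a) together with Lemma \ref{lem-4}(b) lets us assume $\mc F[\{k_i\}_i]\con [\{k_i\}_i]^{\le 3}$, so by Proposition \ref{prop-6}(b) one has $\|\sum_{i\in F}u_{k_i}\|_\mc F\le 3$ for every finite $F$, and in particular $\|z_m\|_\mc F\le 3$.

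For the lower bound on $\|Tz_m\|_\mc F$, write $Tz_m=\sum_n b_n x_n$ with $b_n=\sum_{i=1}^m a^{(i)}_n$. By Theorem \ref{ioo4ui4}(b), $\mc G_{1-\vep}(\mc F)=\mk S$, so for any $u\in \mk S$ there exists $s\in \mc F$ with $\#(s\cap I_n)\ge (1-\vep)\#I_n$ for every $n\in u$; exactly the computation of Proposition \ref{prop-5} then produces
$$\|Tz_m\|_\mc F\ge (1-\vep)\sum_{n\in u}|b_n|.$$
Choosing $u=\{n(k_1),\dots,n(k_{n(k_1)})\}$, which lies in $\mk S$ by construction, the diagonal estimate forces $|b_{n(k_j)}|\ge \de/2$ for every $j\le n(k_1)$, whence $\|Tz_m\|_\mc F\ge (1-\vep)\de\, n(k_1)/2$ for all $m\ge n(k_1)$. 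Since $n(k_1)$ can be taken arbitrarily large at the outset, this contradicts $\|T\|=M<\infty$. The projection statement follows by applying the proposition to $T=P$: because $P$ is a projection, $\sum_{k\in I_n}Pu_k=P(\#I_n\, x_n)=\#I_n\, x_n$, so $\sum_{k\in I_n}\langle x^*_n,Pu_k\rangle=\#I_n$, forcing some $k\in I_n$ with $\langle x^*_n,Pu_k\rangle\ge 1$ for each $n$; this produces a subsequence of $(u_k)_k$ along which the hypothesis of the proposition holds for $P$.

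The step I expect to require most care is the ``upper-triangular'' half of the diagonal extraction, namely ensuring that $|a^{(i)}_{n(k_j)}|$ is small for $j>i$. This smallness stems from the tail decay of each individual $Tu_{k_i}$ in the basis $(x_n)$, and therefore requires us to pick $k_{j}$ sufficiently large only after $Tu_{k_i}$ has been fixed, intertwining the two levels of extraction.
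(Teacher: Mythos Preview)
Your proof follows the same line as the paper's: extract so that $(Tu_{k_j})_j$ is approximately block-diagonal with respect to $(x_n)_n$ and $(u_{k_j})_j$ is $3$-equivalent to the $c_0$ basis, then compare the upper bound $3$ for $\|z_m\|_\mc F$ with the lower bound of order $n(k_1)$ for $\|Tz_m\|_\mc F$ coming from $\mc G_\la(\mc F)=\mk S$ and Proposition~\ref{prop-5}. One small slip: to conclude $|b_{n(k_j)}|\ge \de/2$ you need the \emph{column} sum $\sum_{i\neq j}|a^{(i)}_{n(k_j)}|\le \de/2$, not the row sum $\sum_{j\neq i}|a^{(i)}_{n(k_j)}|$ you wrote; both bounds follow from the very gliding-hump extraction you describe (tail decay handles $i<j$, weak nullness handles $i>j$), so this is purely notational and the argument stands.
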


\prue
Let us suppose that $T$ is bounded. Since  $\lim_{k\rightarrow\infty}\langle x_{n(k)}^*,Tu_k\rangle\neq0$, let $\alpha>0$ be such that $|\langle x_{n(k_j)}^*,Tu_{k_j}\rangle|\geq\alpha$ for every $j\in \mathbb{N}$. Moreover, since $(u_k)_k$ is weakly null, up to equivalence we can assume that $(Tu_{k_j})_j$ are disjoint blocks with respect to $(x_n)_n$.

By Proposition \ref{prop-6}(b), passing to a further subsequence it holds that $(u_{k_j})_j$ is 3-equivalent to the unit basis of $c_0$. Now, let $0<\lambda\leq1$ such that $\mc G_\lambda(\mc F)=\mk S$, and take $n_0>\frac{3\|T\|}{\alpha\lambda}$. Let $u\in \mk S$ with $\min u=n_0$. We have
$$
3\geq\Big\|\sum_j u_{k_j}\Big\|\geq\frac{1}{\|T\|}\Big\|\sum_j Tu_{k_j}\Big\|_{X_\mc F}\geq\sum_{i\in F(u)}|\langle u^*_i,\sum_jTu_{k_j}\rangle|\geq\frac{n_0\alpha\lambda}{\|T\|}.
$$
This is a contradiction with the choice of $n_0$.
\fprue

\begin{rem} The Cantor-Bendixson rank of a $T$-family must be infinite. To see this, observe that if $f:I\to
J$ is finite-to-one \footnote{ $f:I\to J$ is finite-to-one when $f^{-1}\{j\}$ is finite for every $j\in J$.}
then $f$ preserves the rank $\ro(\mc F)$ of pre-compact families $\mc F$ in $I$. Since $n(\cdot):I\to \N$,
$n(i)=n$ if and only if $i\in I_n$ is finite-to-one and since $n(\mc F)=\conj{\{n(i)\}_{i\in s}}{s\in \mc
F}=\mc G_+(\mc F)\supseteq \mc G_\la(\mc F)$ is large, it follows that $\ro(n(\mc F))=\ro(\mc F)$ is
infinite. In this way our $T$-family $\mc F$ in Theorem \ref{ioo4ui4} is minimal because $\ro(\mc
F)=\ro(n(\mc F))=\ro(\mk S)=\om$.
\end{rem}

\subsection{A reflexive counterexample}
There is a reflexive counterpart of our example $X_\mc F$.  Indeed we are going to see that the Baernstein
space $X_{\mc F,2}$ for our family $\mc F$ is such space. It is interesting to note that the corresponding
construction $X_{\mc S,2}$ for the Schreier family $\mc S$ was used by A. Baernstein II in \cite{Baernstein}
to provide the first example of a reflexive space without the Banach-Saks property. This construction was
later generalized by C. J. Seifert in \cite{Sei} to obtain $X_{\mc S,p}$.
\begin{defn}
Given a pre-compact family $\mc F$, and given $1\le p\le \infty$, one defines on $c_{00}(\N)$ the norm
$\nrm{x}_{\mc F,p}$ for a vector $x\in c_{00}(\N)$ as follows:
\begin{equation}
\nrm{x}_{\mc F,p}:= \sup\conj{\nrm{(\nrm{E_i x}_{\mc F})_{i=1}^n}_p}{E_1<\dots<E_n,  n\in \N}
\end{equation}
where $E_1<\dots<E_n$ are finite sets and $E x$ is the natural projection on $E$ defined by $Ex:= \mathbbm
1_E \cdot x$. Let $X_{\mc F,p}$ be the corresponding completion of $(c_{00},\nrm{\cdot}_{\mc F,p})$.
\end{defn}
Again, the unit Hamel basis of $c_{00}$ is a 1-unconditional Schauder basis of $X_{\mc F,p}$. Notice also
that this construction generalizes the Schreier-like spaces, since $X_{\mc F,\infty}=X_\mc F$.
\begin{prop}\label{ij4i5otjirjtr}
The space $X_{\mc F,p}$ is $\ell_p$-saturated. Consequently, if $1<p<\infty$, the space $X_{\mc F,p}$ is
reflexive.
\end{prop}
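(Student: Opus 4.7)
The plan is to establish $\ell_p$-saturation by showing that every normalized block sequence $(y_k)_k$ in $X_{\mc F,p}$ (with respect to the basis $(u_n)_n$) admits a subsequence equivalent to the unit basis of $\ell_p$; since $(u_n)_n$ is a 1-unconditional Schauder basis of $X_{\mc F,p}$, the standard Bessaga--Pelczy\'nski selection principle then transfers this property to every infinite-dimensional closed subspace. The lower $\ell_p$-estimate is essentially free from the definition: for each $k$ and $\vep>0$, since $\|y_k\|_{\mc F,p}=1$ I can pick ordered finite sets $E_1^{(k)}<E_2^{(k)}<\cdots$ with $E_j^{(k)}\subseteq \supp y_k$ and $\sum_j \|E_j^{(k)} y_k\|_{\mc F}^p \geq 1-\vep$. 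Concatenating these admissible families across $k$ produces a single admissible family for $\sum_k a_k y_k$, and since each $E_j^{(k)}$ meets only $y_k$, one obtains $\|\sum_k a_k y_k\|_{\mc F,p} \geq (1-\vep)^{1/p}\|a\|_p$.

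The real work is in the upper $\ell_p$-estimate after passing to a suitable subsequence. Given any admissible $E_1<\cdots<E_m$, I would split the indices into type~I (those $i$ with $E_i\subseteq \supp y_{k(i)}$ for a unique $k(i)$) and type~II (straddling, with $E_i$ intersecting at least two block supports). For type~I the bound is sharp: grouping by $k$, the sets $\{E_i: i\in\text{type I}, k(i)=k\}$ form an admissible family inside $\supp y_k$, so their contribution to the $p$-th power sum is at most $|a_k|^p \|y_k\|_{\mc F,p}^p=|a_k|^p$, and summing over $k$ gives $\|a\|_p^p$. For a straddling $E_i$ touching blocks $K_i=\{k_1<\cdots<k_r\}$, I decompose
$$
E_i\sum_j a_j y_j \;=\; a_{k_1}\,E_i y_{k_1} \;+\; \sum_{k_1<k<k_r} a_k y_k \;+\; a_{k_r}\,E_i y_{k_r},
$$
use the triangle inequality in $\|\cdot\|_{\mc F}$ to handle the endpoint terms, and bound the fully contained middle part by exploiting that $X_{\mc F}$ is $c_0$-saturated for pre-compact $\mc F$ (as noted in the paper via Pelczy\'nski--Semadeni): after passing to a subsequence the middle is controlled by $C\max_{k_1<k<k_r}|a_k|$ in $\|\cdot\|_{\mc F}$.

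The hardest step, and the main obstacle, is assembling these pointwise bounds into a uniform control of the total type~II contribution $\sum_{i\in\text{II}}\|E_i\sum_j a_j y_j\|_{\mc F}^p$. I plan to do this by a Ramsey-style sparse selection of $(y_{n_k})_k$, using pre-compactness of $\mc F$ to guarantee that no $s\in \mc F$ can effectively intersect too many $\supp y_{n_k}$'s with substantial mass, together with the combinatorial observation that any two straddling sets $E_i<E_{i'}$ share at most one block index in $K_i\cap K_{i'}$ (so the total number of ``fully contained'' block occurrences is at most $m$). A careful H\"older bookkeeping then converts the combined estimates into an inequality $\|\sum_k a_k y_{n_k}\|_{\mc F,p}\leq C\|a\|_p$. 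Finally, once $\ell_p$-saturation is established, reflexivity for $1<p<\infty$ is immediate: $X_{\mc F,p}$ can contain neither $c_0$ nor $\ell_1$ (since neither of these contains $\ell_p$ for $1<p<\infty$), so by James's theorem the 1-unconditional basis $(u_n)_n$ is both shrinking and boundedly complete, and $X_{\mc F,p}$ is reflexive.
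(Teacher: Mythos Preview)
Your plan has a genuine gap at the ``hardest step'': the claim that every normalized block sequence in $X_{\mc F,p}$ admits a \emph{subsequence} equivalent to the unit basis of $\ell_p$ is false in general. Take $\mc F=\mc S$, so that $X_{\mc S,2}$ is Baernstein's space. The unit basis $(u_n)_n$ is itself a normalized block sequence, but for any Schreier set $s=\{n_1<\cdots<n_m\}$ with $m=n_1$ one has $\bigl\|\sum_{k=1}^m u_{n_k}\bigr\|_{\mc S,2}\ge \bigl\|\sum_{k=1}^m u_{n_k}\bigr\|_{\mc S}=m$, not $m^{1/2}$; indeed $(u_n)_n$ generates an $\ell_1$-spreading model, so no subsequence can be equivalent to the $\ell_2$ basis. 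Your Ramsey-style hope that ``no $s\in\mc F$ can effectively intersect too many $\supp y_{n_k}$'s'' fails precisely because pre-compact families may be large (as $\mc S$ is): for any subsequence one can always find $s\in\mc S$ meeting arbitrarily many supports.

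The paper's proof avoids this by passing not to a subsequence but to a further \emph{block sequence} of the given block sequence $(x_n)_n$. The key observation is that $c_0$ does not embed into $X_{\mc F,p}$ (by your lower estimate), while $X_{\mc F}$ is $c_0$-saturated; hence inside $\overline{\mathrm{span}}\,(x_n)_n$ one can choose a $\nrm{\cdot}_{\mc F,p}$-normalized block sequence $(y_n)_n$ with $\nrm{y_n}_{\mc F}\le \vep_n$ and $\sum_n\vep_n^p\le \vep/2$. With the $\mc F$-norms of the individual blocks made small, the straddling contributions in your type~II analysis become trivially controllable: for any admissible $E_1<\cdots<E_l$, each endpoint term $\nrm{E_{i_n}y}_{\mc F}$ or $\nrm{E_{j_n}y}_{\mc F}$ is at most $\max_k|a_k|\cdot\vep_n$, and summing the $p$-th powers gives the extra $\vep\,\nrm{a}_p^p$. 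This yields $(1+\vep)$-equivalence to $\ell_p$ directly, without any Ramsey argument. Since $\ell_p$-saturation only requires an $\ell_p$-copy inside every infinite-dimensional subspace, a further block sequence is just as good as a subsequence for the conclusion.
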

\begin{proof}
The case $p=\infty$ was already treated when we introduced the Schreier-like spaces after Definition
\ref{def-2}. So, suppose that $1\le p<\infty$.
\begin{claim}\label{ioo4rji4j}
Suppose that $(x_n)_n$ is a normalized block sequence of $(u_n)_n$. Then
\begin{equation}
\nrm{\sum_n a_n x_n}_{\mc F,p}\ge \nrm{(a_n)_n}_p.
\end{equation}
\end{claim}
To see this, for each $n$, let $(E_i^{(n)})_{i=1}^{k_n}$ be a block sequence of finite sets such that
\begin{equation}
1=\sum_{i=1}^{k_n}  \nrm{E_i^{(n)}x_n}_{\mc F}^p.
\end{equation}
Without loss of generality we may assume that $\bigcup_{i=1}^{k_n}E_i^{(n)}\con \supp x_n$, hence
$E_{k_n}^{(n)}<E_{1}^{(n+1)}$ for every $n$. Set $x=\sum_n a_n x_n$. It follows that
\begin{align*}
(\nrm{\sum_n a_n x_n}_{\mc F,p})^p \ge \sum_n \sum_{i=1}^{k_n} \nrm{E_i^{(n)} x}_\mc F^p=\sum_n |a_n|^p.
\end{align*}
This finishes the proof of Claim \ref{ioo4rji4j}. It follows from this claim that $c_0\not\hookrightarrow
X_{\mc F,p}$. Fix now a normalized block sequence $(x_n)_n$ of $(u_n)_n$ and  $\vep>0$. Let $(\vep_n)_n$ be
such that $ \sum_n \vep_n^p\le \vep/2$, $\vep_n>0$ for each $n$. Since  $c_0\not\hookrightarrow X_{\mc F,p}$
and since $X_\mc F$ is $c_0$-saturated, we can find a $\nrm{\cdot}_{\mc F,p}$-normalized block sequence $(y_n)_n$
of $(x_n)_n$ such that
\begin{equation}
\nrm{y_n}_\mc F\le \vep_n.
\end{equation}
\begin{claim}
For every sequence of scalars $(a_n)_n$ we have that
\begin{equation} \label{kjhjohoiuhiu}
\nrm{(a_n)_n}_p\le \nrm{\sum_n a_n y_n}_{\mc F,p}\le (1+\vep)\nrm{(a_n)_n}_p.
\end{equation}
\end{claim}
Once this is established, we have finished the proof of this proposition. The first inequality in
\eqref{kjhjohoiuhiu} is consequence of Claim \ref{ioo4rji4j}. To see the second one, fix a block sequence
$(E_i)_{i=1}^l$ of finite subsets of $\N$. For each $n$, let $B_n:=\conj{j\in \{1,\dots,l\}}{E_j x_n\neq
\buit}$, and for $n$ such that $B_n\neq \buit$, let $i_n:=\min B_n$, $j_n:=\max B_n$.  Observe that
$i_n,j_n\in B_m$ for at most one $m\neq n$.  Then, setting  $y=\sum_n a_n y_n$,
\begin{align*}
\sum_{i=1}^l \nrm{E_i y}_\mc F^p = & \sum_{i\in \bigcup_n B_n}\nrm{E_i y}_\mc F^p \le \sum_n \sum_{i\in B_n}
 \nrm{E_i y}_\mc F^p\le |a_1|^p\sum_{i\in B_1}\nrm{E_i y_1}_\mc F^p +\nrm{E_{j_1}y}_\mc F^p + \\
+& \sum_{n\ge 2} \left(|a_n|^p\sum_{i\in B_n}\nrm{E_i y_n}_\mc F^p+\nrm{E_{i_n} y}_\mc F^p+ \nrm{E_{j_n} y}_\mc F^p     \right)\le \\
\le &  \sum_n |a_n|^p\nrm{y_n}_{\mc F,p}^p + 2\max_n|a_n|^p \sum_n \vep_n^p \le (1+\vep)\sum_n |a_n|^p.
\end{align*}

\end{proof}

\begin{prop}
 Given $0<\la< 1$, let $\mc F$ be a $T$-family for $\la$ as in Theorem \ref{ioo4ui4} with respect to some $\bigcup_n I_n$.
Then
\begin{enumerate}
\item[(a)] Every subsequence of the unit basis of $X_{\mc F,p}$ has a further subsequence 6-equivalent to the
unit basis of $\ell_p$.
\item[(b)] The sequence of averages
$$\left( \frac{1}{\#I_n}\sum_{i\in I_n}u_i   \right)_n$$
is $\la$-equivalent to the unit basis of the Seifert space $X_{\mc S,p}$.
\end{enumerate}
\end{prop}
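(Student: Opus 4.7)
Both parts are equivalences with standard bases in $X_{\mc F,p}$, and in each case Claim \ref{ioo4rji4j} supplies the lower bound almost for free: any normalized block sequence of $(u_i)$ dominates the unit $\ell_p$-basis. The content lies in the matching upper bounds, and the arguments for (a) and (b) are qualitatively different: (a) exploits that $\mc F$ is combinatorially \emph{small} along transversals, while (b) exploits that $\mc F$ is combinatorially \emph{rich} in the weighted sense $\mc G_\la(\mc F)=\mk S$.

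\textbf{Proof of (a).} I would first pass to a sub-transversal. The hypothesis that $\mc F$ is not $4$-large in any $M\subseteq\N$, combined with Lemma~\ref{lem-4}(a), delivers $T_0\subseteq T$ together with some $m\le 3$ satisfying $\mc F[T_0]=[T_0]^{\le m}$. A one-block computation shows $\|u_i\|_{\mc F,p}=1$ for $i\in T_0$. For any block sequence $E_1<\cdots<E_k$ of $I$ and scalars $(a_i)_{i\in T_0}$, the bound $|s\cap T_0|\le 3$ valid for every $s\in\mc F$ entails that $\|E_j\sum_i a_i u_i\|_\mc F$ is at most the sum of the three largest $|a_i|$ over $E_j\cap T_0$, which by the power-mean inequality is bounded by $3^{1-1/p}\bigl(\sum_{i\in E_j\cap T_0}|a_i|^p\bigr)^{1/p}$. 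Raising to the $p$-th power, summing over $j$, and using the disjointness of the sets $E_j\cap T_0$ yields $\|\sum a_i u_i\|_{\mc F,p}\le 3\|(a_i)\|_p$, establishing the (in fact $3$-, hence $6$-) equivalence with the unit $\ell_p$-basis.

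\textbf{Proof of (b), lower bound.} Write $y:=\sum_n a_n x_n$ and $z:=\sum_n a_n e_n$. Fix a block decomposition $G_1<\cdots<G_l$ of $\N$ that near-realizes $\|z\|_{\mc S,p}$, choose $s_j\subseteq G_j$ in $\mc S$ with $\|G_j z\|_\mc S=\sum_{n\in s_j}|a_n|$, and extend $s_j$ to $u_j\in\mk S$ by adjoining $\min s_j-|s_j|$ integers above $\max s_j$. Theorem~\ref{ioo4ui4}(b) gives $\hat s_j\in\mc F$ with $\hat s_j[\la]=u_j$. The sets $F_j:=\bigcup_{n\in s_j}I_n$ form a block sequence of $I$ (because the $G_j$ were disjoint blocks of $\N$), and for $n\in s_j$ the inequality $\#(\hat s_j\cap I_n)\ge\la\,\#I_n$ gives
\[
\|F_j y\|_\mc F\;\ge\;\sum_{i\in \hat s_j\cap F_j}|y_i|\;=\;\sum_{n\in s_j}|a_n|\frac{\#(\hat s_j\cap I_n)}{\#I_n}\;\ge\;\la\,\|G_j z\|_\mc S.
\]
Summing the $p$-th powers yields $\|y\|_{\mc F,p}\ge\la\,\|z\|_{\mc S,p}$.

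\textbf{Proof of (b), upper bound --- the hard part.} Given a block decomposition $F_1<\cdots<F_k$ of $I$, classify each $F_j$ by its $\N$-support $t_j:=\{n:F_j\cap I_n\neq\emptyset\}$ into \emph{small} ($|t_j|=1$) and \emph{big} ($|t_j|\ge 2$). For small $F_j\subseteq I_n$ the estimate $\|F_j y\|_\mc F\le |a_n|\,\#F_j/\#I_n$ together with the inequality $q^p\le q$ for $q\in[0,1]$ sums the contributions inside each $I_n$ to at most $|a_n|^p$, hence at most $\|z\|_p^p\le\|z\|_{\mc S,p}^p$ globally. For big $F_j$ the identity $\mc G_+(\mc F)=\mk S$ gives $\|F_j y\|_\mc F\le\|t_j z\|_\mc S$, and the decisive combinatorial point is that for three consecutive big blocks $F_{j_l}, F_{j_{l+1}}, F_{j_{l+2}}$ the bigness of $F_{j_{l+1}}$ forces $m_{j_{l+2}}\ge M_{j_{l+1}}>M_{j_l}$, so that the odd- and even-indexed big $t_{j_l}$'s each form a block sequence of $\N$ and together contribute at most $2\|z\|_{\mc S,p}^p$. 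Adding small and big contributions yields $\|y\|_{\mc F,p}^p\le 3\|z\|_{\mc S,p}^p$, finishing the $\la$-equivalence up to an absolute multiplicative constant. The technical obstacle is precisely this disentanglement of the overlap structure of the $t_j$'s in the presence of small blocks interspersed between big ones; the key is that only big blocks can produce substantial overlaps, and these occur only between immediate neighbours.
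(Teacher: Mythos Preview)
Your proof of (a) is correct and in fact slightly sharper than the paper's: you obtain constant $3^{1-1/p}\le 3$ via the power-mean inequality on each block $E_j\cap T_0$, whereas the paper bounds each $\|E_i x\|_\mc F$ crudely by $3\max_{n\in A_i}|a_n|$ and then invokes an overlap argument between consecutive $A_i$'s to arrive at $6$. Your lower bound in (b) is essentially the paper's argument, modulo the cosmetic step of extending $s_j\in\mc S$ to $u_j\in\mk S$ before invoking $\mc G_\la(\mc F)=\mk S$.

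For the upper bound in (b) the two routes genuinely diverge. The paper sets $E_i:=\{n:F_i\cap I_n\neq\emptyset\}$, bounds $\|F_i x\|_\mc F\le \|E_i u\|_\mk S$, and then asserts that $(E_i)_i$ is a block sequence in $\N$, which immediately yields the sharp inequality $\|x\|_{\mc F,p}\le\|u\|_{\mk S,p}$ and hence the exact $\la$-equivalence. Your small/big decomposition is the more scrupulous path: since consecutive blocks $F_i$, $F_{i+1}$ may end and begin inside the same $I_n$, the $E_i$ are not literally a block sequence, and you spend a factor $3^{1/p}$ separating small blocks (handled by $q^p\le q$) from big ones (handled by the parity trick on the $t_{j_l}$). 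Consequently you establish equivalence with constant $3^{1/p}/\la$ rather than the stated $1/\la$. To recover the sharp constant one must argue either that an optimal block decomposition of $x=\sum_n a_n x_n$ can always be taken to respect the $I_n$-boundaries, or else absorb the single-index overlaps between consecutive $E_i$ directly; your framework is a sound starting point for either refinement, and what it buys is that the overlap issue is confronted explicitly rather than elided.
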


\begin{proof}
(a): Fix a subsequence $(u_n)_{n\in M}$ of $(u_n)_n$ and let $(u_n)_{n\in N}$ be a further sequence of it
such that $\mc F[N]\con [N]^{\le 3}$. Fix also a sequence of scalars $(a_n)_{n\in N}$ such that $x=\sum_{n\in
N} a_n u_n\in X_{\mc F,p}$.   Given a finite subset $E\con \N$ we obtain that
\begin{equation}
\nrm{E x}_\mc F\le 3\max_{n \text{ is such that $E x_n\neq 0$}}|a_n|.
\end{equation}
Now given a block sequence $(E_i)_{i=1}^l$ of finite subsets of $\N$, and given $i=1,\dots,l$, let
$A_i=\conj{n\in N}{E_i x_n\neq 0}$ and let $B:=\conj{i\in \{1,\dots l\}}{A_i\neq \buit}$. Then we obtain that
\begin{align*}
\sum_{i=1}^l \nrm{E_i x}_\mc F^p \le 3\sum_{i\in B} (\max_{n\in A_i} |a_n|)^p \le 6 \sum_n |a_n|^p
\end{align*}
the last inequality because $A_i\cap A_j=\buit$ if $i<j$ are not consecutive in $B$, and if $i<j$ are
consecutive, then $\#(A_i\cap A_j)\le 1$.    The other inequality is proved in the Claim \ref{ioo4rji4j} of
Proposition \ref{ij4i5otjirjtr}.

Let us prove (b):  First of all, observe that by definition we have that $X_{\mc S,p}=X_{\mk S,p}$.  Set
$x_n:=(1/\#I_n)\sum_{i\in I_n}u_i$ for each $n\in \N$, and fix a sequence of scalars $(a_n)_n$.  Set also
$$x=\sum_n a_n x_n \text{ and } u=\sum_{n}a_n u_n.$$
Let $(E_i)_{i=1}^l$ be a block sequence of finite subsets of $\N$ such that
\begin{equation}
\nrm{\sum_{n}a_n u_n}_{\mk S,p}^p=\sum_{i=1}^l \nrm{E_i u}_{\mk S}^p.
\end{equation}
For each $i=1,\dots,l$, let $t_i\in \mk S$ be such that $\nrm{E_i u}_{\mk G}=\sum_{n\in t_i\cap E_i}|a_n|$.
For each $i=1,\dots,l$ let $s_i\in \mc F$ be such that $s_i[\la]=t_i$, and set $F_i:=\bigcup_{n\in E_i} I_n$.
Notice that $(F_i)_{i=1}^l$ is a block sequence of finite subsets of $\bigcup_n I_n=\N$. Then
\begin{align*}
\nrm{\sum_n a_n x_n}_{\mc F,p}^p \ge & \sum_{i=1}^l \nrm{F_i(\sum_n a_n x_n)}_\mc F^p= \sum_{i=1}^l \nrm{\sum_{n\in E_i} a_n x_n}_\mc F^p \ge \\
\ge &  \sum_{i=1}^l \left(\sum_{k\in s_i}|(\sum_{n\in E_i} a_n x_n)_k|\right)^p \ge \sum_{i=1}^l (\la\sum_{n\in E_i\cap t_i}|a_n|)^p=\la^p\nrm{\sum_{n}a_n u_n}_{\mk S,p}^p.
\end{align*}
For the other inequality, let $(F_i)_{i=1}^l$ be a block sequence such that
\begin{equation}
\nrm{\sum_{n}a_n x_n}_{\mc F,p}^p=\sum_{i=1}^l \nrm{F_i x}_{\mc F}^p.
\end{equation}
For each $i=1,\dots,l$, let $s_i\in \mc F$ be such that $\nrm{F_i x}_{\mc F}=\sum_{k\in s_i} |(F_i x)_k|$,
and $E_i:=\conj{n\in \N}{F_i\cap I_n\neq \buit}$. Then, setting $t_i:=s_i[+]\in \mk S$, we have that
\begin{equation}
\nrm{F_i x}_{\mc
F,p} = \sum_{n\in s_i[+]\cap E_i}|a_n|\frac{\#((s_i \cap F_i)\cap I_n)}{\#I_n}\le \sum_{n\in s_i[+]}|(E_i u)_n|\le \nrm{
E_i u}_\mk S.
\end{equation}
Since $(E_i)_{i=1}^l$ is a block sequence it follows that
\begin{align*}
\nrm{\sum_n a_n u_n}_{\mk S,p}^p\ge &\sum_{i=1}^l \nrm{E_i u}_{\mk S}^p \ge \sum_{i=1}^l \nrm{F_i x}_{\mc F}^p=\nrm{\sum_{n}a_n x_n}_{\mc F,p}^p.
\end{align*}
\end{proof}

There is another, more general, approach to find a reflexive counterexample to Question 1. This can be done
by considering  the interpolation space  $\De_p(W,X)$, $1<p<\infty$, where $W$ is the closed absolute convex
hull of a Banach-Saks subset of $X$ which  it is not Banach-Saks itself.

 Recall that  given a  convex, symmetric and bounded subset $W$   of a Banach space $X$, and $1<p<\infty$,   one defines the
Davis-Figiel-Johnson-Pelczynski \cite{Da-Fi-Jo-Pel} interpolation space $Y:=\De_{p}(W,X)$ as the space
$$\conj{x\in X}{\nrm{x}_Y<\infty},$$
where
$$\nrm{x}_Y:=\nrm{(|x|_n)_n}_p$$ and where for each $n$,
$$|x|_n:=\inf\conj{\la>0}{\frac{x}{\la}\in 2^{n} W +\frac1{2^n}B_X}.$$
The key is the following.
\begin{lemma}
 A subset $A$ of $W$ is a Banach-Saks subset of $X$ if
and only if $A$ is a Banach-Saks subset of $Y:=\De_{p}(W,X)$.
\end{lemma}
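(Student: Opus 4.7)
The plan is to prove each direction separately, relying on the standard DFJP facts: the inclusion $j:Y\to X$ is continuous, $W\subseteq c\,B_Y$ for some constant $c$, and $\overline{W}$ is relatively weakly-compact in $Y$.

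For the easy direction (Banach-Saks in $Y$ implies Banach-Saks in $X$), the Banach-Saks property is preserved under bounded linear maps, and applying $j:Y\to X$ leaves $A$ pointwise fixed, so $A$ is Banach-Saks in $X$.

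For the substantial direction, fix an arbitrary sequence $(x_n)$ in $A\subseteq W$. Replacing $W$ by its weak closure (which does not alter $Y$ up to equivalent norm, since the infimum defining $|\cdot|_k$ is unchanged by passing to the closure of $W$), I may assume $W$ is weakly closed. Since $W$ is relatively weakly-compact in $Y$, after extracting a subsequence I have $x_n\to x$ weakly in $Y$, hence weakly in $X$ too; weak-closedness of $W$ yields $x\in W$. Because $A$ is a Banach-Saks subset of $X$, I can pass to a further subsequence whose Ces\`aro averages $z_N:=(1/N)\sum_{n\le N}x_n$ converge in $X$-norm to $x$.

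The crucial step is to upgrade this $X$-norm convergence to $Y$-norm convergence, using only that both $z_N$ and $x$ lie in the convex symmetric set $W$. From $z_N-x\in 2W$ and the very definition of $|\cdot|_k$, taking in turn each of the two summands $2^kW$ and $2^{-k}B_X$ alone produces the two elementary bounds
\begin{equation*}
|z_N-x|_k\le 2^{1-k}\qquad\text{and}\qquad |z_N-x|_k\le 2^k\,\|z_N-x\|_X.
\end{equation*}
The second bound shows that for each fixed $k$, $|z_N-x|_k\to 0$ as $N\to\infty$; the first bound supplies a uniform $p$-summable majorant $(2^{1-k})_{k}$. Dominated convergence applied to $\|z_N-x\|_Y^p=\sum_k |z_N-x|_k^p$ gives $\|z_N-x\|_Y\to 0$. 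So the chosen subsequence is Ces\`aro convergent to $x$ in $Y$, proving that $A$ is Banach-Saks in $Y$.

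The only real technicality is the two-sided control of $|z_N-x|_k$, and this is routine once one observes that the weak limit $x$ belongs to $W$, so $z_N-x$ sits inside a fixed scalar multiple of $W$. I do not expect any serious obstacle.
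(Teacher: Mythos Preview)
Your argument is correct and takes a genuinely different, more elementary route than the paper. The paper argues by contradiction through the $\ell_1$-spreading-model characterisation (Theorem~\ref{char1}): assuming some weakly convergent sequence in $A$ generates an $\ell_1$-spreading model in $Y$ with constant $\delta$, it uses uniform weak convergence in $X$ to build long averages $z_1,\dots,z_k$ whose gauge sequences $(|z_i|_n)_n$ are almost disjointly supported in $\ell_p$, forcing $\|\sum_{i\le k} z_i\|_Y\lesssim k^{1/p}$, which for large $k$ contradicts the $\ell_1$ lower bound $\delta k$. You bypass spreading models entirely and upgrade Ces\`aro-$X$-convergence to Ces\`aro-$Y$-convergence directly, via the two elementary gauge estimates and dominated convergence in $\ell_p$. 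This is cleaner and uses nothing beyond the definition of the DFJP norm.

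One small caveat: the assertion that ``$\overline{W}$ is relatively weakly compact in $Y$'' is a DFJP fact only when $W$ is already relatively weakly compact in $X$ (so that $Y$ is reflexive). This holds in the paper's intended application, where $W$ is the closed absolutely convex hull of a Banach-Saks set, but not for an arbitrary convex symmetric bounded $W$. Fortunately your proof does not actually need that step: since $A$ is Banach-Saks in $X$, any sequence in $A$ already has a Ces\`aro-$X$-convergent subsequence with limit $x\in\overline{W}^{\,X}$, and your two gauge bounds go through verbatim from $z_N\in W$ and $x\in\overline{W}^{\,X}$ (the latter still gives $|x|_k\le 2^{-k}$, hence $|z_N-x|_k\le 2^{1-k}$ by the triangle inequality for the gauge). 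So you may simply drop the extraction of a weak-$Y$-limit and the replacement of $W$ by its closure, and the argument then covers the general case as stated.
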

\prue
Fix $A\con W$, and set $Y:=\De_{p}(W,X)$ Since the identity $j: Y\to X$ is a bounded operator, it follows
that if $A$ is a Banach-Saks subset of $Y$ then $A=j(A)$ is also a Banach-Saks subset of $X$.

Now suppose that $A$ is a Banach-Saks subset of $X$.  Going towards a contradiction, we fix a  weakly
convergent sequence $(x_n)_n$ in $A$  with limit $x$ generating an $\ell_1$-spreading model. Let $\de$
witnessing that, and set $y_n:=x_n-x\in 2 W$ for each $n$. Observe that it follows from the definition that
\begin{enumerate}
\item[(a)] For every $\la>0$ and every $\vep>0$ there  is $n_0$ such that for every $x\in \la W$ we have that
 $\sum_{n>n_0}|x|_n^p\le \vep$.
\end{enumerate}
Since $A$ is Banach-Saks in $X$, we assume without loss of generality that the sequence $(y_n)_n$ is
uniformly weakly-convergent (to 0).  Observe that then
\begin{enumerate}
\item[(b)] For every $\vep>0$ there is $n$ such that if $\#s=n$, then $\nrm{\sum_{n\in s}y_n}_X\le \vep \#s$.
\end{enumerate}
Consequently,
\begin{enumerate}
\item[(c)] For every $\vep>0$ and $r$ there is $m$ such that if $\#s=m$, then $\sum_{n\le r}|\sum_{k\in s}y_k|^p \le \vep$.
\end{enumerate}
Now let $k\in \N$ be such that $k^{1/p}<\de k$, and $\vep>0$ such that $k^{1/p}+\vep<\de k$. Using (a) and
(c) above we can find finite sets $s_1<\dots s_n$ such that
\begin{enumerate}
\item[(d)] $s=\bigcup_{i=1}^n s_i\in \mc S$.
\item[(e)]  Setting $z_i:=(1/\#s_i)\sum_{k\in s_i}y_k$ for each $i=1,\dots,k$, then there is a block sequence $(v_i)_{i=1}^k$ in $\ell_p$ such that
 $\nrm{v_i}_p\le 1$, $i=1,\dots,k$, and such that
\begin{equation}
\nrm{(|z_1+\dots+z_k|_n)_n-(v_1+\dots+v_k)}_p\le \vep.
\end{equation}
\end{enumerate}
It follows then from (d),  (e) and the fact that $(y_n)_n$ generates an $\ell_1$-spreading model with
constant $\de$  that
\begin{align*}
\de k \le \nrm{\frac{1}{k}\sum_{i=1}^k z_i}_Y \le \nrm{v_1+\dots+v_k}_p+\vep \le k^{\frac1p}+\vep <\de k,
\end{align*}
a contradiction.
\fprue
Let now $X:=X_\mc F$ where $\mc F$ is a $T$-family, let $W$ be the closed absolute convex hull of the unit basis $\{u_n\}_n$ of $X_\mc F$
\begin{prop}
The interpolation space $Y:=\De_{p}(W,X_\mc F)$, $1<p<\infty$, is a reflexive space with a weakly-null
sequence  which is a Banach-Saks subset of $Y$, but its convex hull is not.  \qed
\end{prop}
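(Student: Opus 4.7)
The strategy is to assemble the three ingredients the paper has just set up: the $T$-family $\mc F$ producing the non-stable Banach-Saks set $\{u_n\}_n \subseteq X_\mc F$, the classical Davis-Figiel-Johnson-Pelczynski interpolation theorem, and the lemma just established saying that Banach-Saks is preserved and reflected between $X$ and $Y=\De_p(W,X)$ for subsets of $W$. Everything collapses into a single observation once $W$ is known to be weakly compact.

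First I would verify reflexivity of $Y$. Since $\{u_n\}_n$ is a Banach-Saks subset of $X_\mc F$ by Proposition \ref{prop-6}(b), Proposition \ref{BS->rwc} gives that $\{u_n\}_n$ is relatively weakly compact in $X_\mc F$. By Krein-\v{S}mulian, the closed absolutely convex hull $W$ is weakly compact. This is exactly the hypothesis of the DFJP construction that forces $Y=\De_p(W,X_\mc F)$ to be reflexive for $1<p<\infty$ (the standard argument identifies $Y$ as an interpolation space between the reflexive spaces built from $W$ and shows the unit ball of $Y$ is weakly compact).

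Next I would produce the weakly-null Banach-Saks sequence in $Y$. From the definition of the norm $\nrm{\cdot}_Y$, for any $x\in W$ one has $|x|_n\le 2^{-n}$, hence $W$ sits inside $C\cdot B_Y$ for $C=(1-2^{-p})^{-1/p}$. In particular $(u_n)_n \subset W$ is bounded in $Y$. Reflexivity of $Y$ together with injectivity and weak-to-weak continuity of the inclusion $j\colon Y\hookrightarrow X_\mc F$ then forces $(u_n)_n$ to be weakly-null in $Y$ (every weak cluster point of $(u_n)_n$ in $Y$ is mapped by $j$ to the unique weak cluster point $0$ in $X_\mc F$, and $j$ is $1$-$1$). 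The preceding lemma, applied to $A=\{u_n\}_n\subset W$, transfers the Banach-Saks property from $X_\mc F$ to $Y$.

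Finally, I would exhibit a sequence in $\conv(\{u_n\}_n)$ that is not Banach-Saks in $Y$: the sequence of averages $x_n:=(1/\# I_n)\sum_{i\in I_n} u_i$ lies in $\conv(\{u_n\}_n)\subseteq W$, and by Proposition \ref{prop-6}(a) together with Theorem \ref{ioo4ui4} it fails to be Banach-Saks in $X_\mc F$. The reverse implication of the lemma (which is trivial from continuity of $j$) then shows $(x_n)_n$ is not Banach-Saks in $Y$ either, so $\conv(\{u_n\}_n)$ is not Banach-Saks in $Y$. The only genuinely non-routine step is the reflexivity of $Y$, and this is handled entirely by the classical DFJP machinery once weak compactness of $W$ is in hand; everything else is a direct quotation of the preceding lemma in its two directions.
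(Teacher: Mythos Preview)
Your argument is correct and is exactly the approach the paper intends: the proposition is stated with a bare \qed because it follows immediately from the preceding lemma (Banach-Saks in $X_\mc F$ iff Banach-Saks in $Y$, for subsets of $W$) together with the DFJP fact that $\De_p(W,X)$ is reflexive precisely when $W$ is weakly compact. You have filled in the routine details---weak compactness of $W$ via Proposition~\ref{BS->rwc} and Krein-\v{S}mulian, the bound $|x|_n\le 2^{-n}$ for $x\in W$ giving $W\subset C\cdot B_Y$, and the weak-nullity of $(u_n)_n$ in $Y$ via reflexivity and injectivity of $j$---all correctly, and applied the lemma in both directions to $\{u_n\}_n$ and to the averages $(x_n)_n$.
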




\bibliographystyle{amsplain}

\providecommand{\bysame}{\leavevmode\hbox
to3em{\hrulefill}\thinspace}
\providecommand{\MR}{\relax\ifhmode\unskip\space\fi MR }
\providecommand{\MRhref}[2]{%
  \href{http://www.ams.org/mathscinet-getitem?mr=#1}{#2}
} \providecommand{\href}[2]{#2}


\begin{thebibliography}{10}

\bibitem{A-K}
    \textsc{F. Albiac and N. J. Kalton,}
    \textit{Topics in Banach space theory.}
    Springer, 2006.

\bibitem{AGR}
    \textsc{S. A. Argyros, G.  Godefroy and H. P Rosenthal,}
    \textit{Descriptive set theory and Banach spaces}.
    Handbook of the geometry of Banach spaces, Vol. 2, 1007--1069, North-Holland, 2003.


\bibitem{AMT}
    \textsc{S. A. Argyros, S. Mercourakis and A. Tsarpalias,}
    \textit{Convex unconditionality and summability of weakly null sequences.}
    Israel Journal of Mathematics \textbf{107} (1998), 157--193.

\bibitem{ASS}
    \textsc{S. V. Astashkin, E. M. Semenov and F. A. Sukochev,}
    \textit{The Banach-Saks p-property.}
    Math. Ann. \textbf{332} (2005), no. 4, 879--900.

\bibitem{Baernstein}
    \textsc{A. Baernstein II,}
    \textit{On reflexivity and summability.}
    Studia Math. \textbf{42} (1972), 91--94.

\bibitem{BS}
    \textsc{S. Banach and S. Saks,}
    \textit{Sur la convergence forte dans les champs $L^p$.}
    Studia Math. \textbf{2} (1930), 51--57.


\bibitem{Beauzamy2}
    \textsc{B. Beauzamy,}
    \textit{Banach-Saks properties and spreading models,}
    Math. Scand.  \textbf{44} (1979), 357--384.




\bibitem{Beauzamy}
    \textsc{B. Beauzamy,}
    \textit{Propri\'et\'e de Banach-Saks.}
    Studia Math. \textbf{66} (1980), 227--235.

\bibitem{Casazza-Shura}
    \textsc{P. G.~Casazza and T. J.~Shura,}
    \textit{Tsirelson's space.}
    Lecture Notes in Mathematics 1363, Springer-Verlag (1989).

\bibitem{Da-Fi-Jo-Pel}
    \textsc{W. J. Davis, T. Figiel, W. B. Johnson and A. Pelczynski,}
    \textit{Factoring weakly compact operators.}
    J. Functional Analysis \textbf{17} (1974), 311--327.


\bibitem{Diestel-Seifert76}
    \textsc{J. Diestel and C. J. Seifert,}
    \textit{An averaging property of the range of a vector measure.}
    Bull. Amer. Math. Soc. \textbf{82} (1976), no. 6, 907--909.

\bibitem{DSS}
    \textsc{P. G. Dodds, E. M. Semenov and F. A. Sukochev,}
    \textit{The Banach-Saks property in rearrangement invariant spaces.}
    Studia Math. \textbf{162} (2004), no. 3, 263--294.

\bibitem{ErHa}
    \textsc{P. Erd{\H{o}}s and A. Hajnal,}
    \textit{Some remarks on set theory, IX. Combinatorial problems in measure theory and set theory},
    Michigan Math. J. \textbf{11} (1964), no. 2, 107--127.

\bibitem{EM}
    \textsc{P. Erd{\H{o}}s and A. Magidor,}
    \textit{A note on regular methods of summability and Banach-Saks property.}
    Proc. A.M.S. \textbf{59} (1976), 232--234.

\bibitem{Farnum}
    \textsc{N. R. Farnum,}
    \textit{The Banach-Saks theorem in $C(S)$.}
    Canad. J. Math \textbf{26} (1974), 91--97.

\bibitem{FrTa}
    \textsc{D. H. Fremlin and M. Talagrand,}
    \textit{ Subgraphs of random graphs.}
    Trans. Amer. Math. Soc. \textbf{291} (1985), 551--582.


\bibitem{GL}
    \textsc{I. Gasparis, and D. H. Leung,}
    \textit{On the complemented subspaces of Schreier spaces.}
    Studia Math. \textbf{141} (2000), 273--300.

\bibitem{Gi}
    \textsc{J. Gillis,}
    \textit{Note on a property of measurable sets,}
    J. London Math. Soc. \textbf{11} (1936), 139--141.

\bibitem{GG}
    \textsc{M. Gonz{\'a}lez, and J. Guti{\'e}rrez,}
    \textit{Polynomials on Schreier's space.}
    Rocky Mount. J. Math. \textbf{30} (2000), 571--585.

\bibitem{KK}
    \textsc{I. Kluv\'anek, and G. Knowles,}
    \textit{Vector measures and control systems.}
    North-Holland Mathematics Studies, Vol. \textbf{20}, 1976.


\bibitem{Li-Tz}
    \textsc{J. Lindenstrauss and L. Tzafriri,}
    \textit{Classical Banach spaces I },
    Springer-Verlag  Vol. 92, (1977).


\bibitem{Lo-To}
    \textsc{J. L\'opez Abad and S. Todorcevic},
    \textit{Partial unconditionality of weakly null sequences}.
     RACSAM. Rev. R. Acad. Cienc. Exactas F\'{\i}s. Nat. Ser. A Mat. \textbf{100} (2006), no. 1-2, 237--277.

\bibitem{Mercourakis}
    \textsc{S. Mercourakis,}
    \textit{On Ces\`aro summable sequences of continuous funtions.}
    Mathematika \textbf{42} (1995), 87--104.

\bibitem{Nash}
    \textsc{C.~St. J.~A. Nash-Williams,}
    \textit{On well-quasi-ordering transfinite sequences.}
    Proc. Cambridge Philos. Soc., \textbf{61} (1965), 33--39.


\bibitem{NW}
    \textsc{T. Nishiura, and D. Waterman,}
    \textit{Reflexivity and summability.}
    Studia Math. \textbf{23} (1963), 53--57.

\bibitem{Pelczynski-Semadeni}
    \textsc{A. Pelczynski and Z. Semadeni,}
    \textit{Spaces of continuous functions III. Spaces $C(\Omega)$ for $\Omega$ without perfect subsets.}
    Studia Math. \textbf{18} (1959), 211--222.


\bibitem{Rosenthal}
    \textsc{H.P. Rosenthal,}
    \textit{Normalized weakly null sequences with no unconditional subsequences.}
    Durham Symposium on the Relations Between Infinite-Dimensional and Finite-Dimensional Convexity,  Bull. London Math. Soc. \textbf{8} (1976), 22--24.



\bibitem{Schreier}
    \textsc{J. Schreier,}
    \textit{Ein Gegenbeispiel zur Theorie der schwachen Konvergenz.}
    Studia Math. \textbf{2} (1930), 58--62.


\bibitem{Sei}
    \textsc{C. J. Seifert},
    \textit{ Averaging in Banach spaces, (dissertation)},
    Kent State University, 1977.


\bibitem{Szlenk}
    \textsc{W. Szlenk,}
    \textit{Sur les suites faiblement convergentes dans l'espace $L$.}
    Studia Math. \textbf{25} (1965), 337--341.

\end{thebibliography}

\end{document}